\pgfplotsset{compat=1.18}
\newtheorem{theorem}{Theorem}[section]
\newtheorem*{theorem*}{Theorem A}
\newtheorem{lemma}[theorem]{Lemma}
\newtheorem{proposition}[theorem]{Proposition}
\newtheorem{corollary}[theorem]{Corollary}
\newtheorem*{definition*}{Definition}
\newtheorem*{remark*}{Remark}
\newtheorem*{observation*}{Observation}
\newtheorem*{assumption*}{Assumption}
\newtheorem{conjecture}{Conjecture}
\newtheorem{problem}[conjecture]{Problem}
\theoremstyle{definition}
\newtheorem{definition}{Definition}[section]
\newtheorem{question}{Question}
\newtheorem*{problem*}{Problem}
\theoremstyle{remark}
\newtheorem{remark}{Remark}[section]
\newtheorem{claim}{Claim}[section]
\newcommand{\N}{\mathbb{N}}
\newcommand{\D}{\mathbb{D}}
\begin{document}

\title[Shimorin-type kernels]{$L^p$--$L^q$ estimates for Shimorin-type integral operators}

\author{Yuerang Li}
\address{Yuerang LI: College of Mathematics and Statistics, Chongqing University, Chongqing
401331, China}
\email{yuerangli@outlook.com}

\author{Zipeng Wang}
\address{Zipeng WANG: College of Mathematics and Statistics, Chongqing University, Chongqing
401331, China}
\email{zipengwang2012@gmail.com; zipengwang@cqu.edu.cn}

\author{Kenan Zhang}
\address{Kenan ZHANG, School of Mathematical Sciences, Fudan University, Shanghai, 200433,
China}
\email{knzhang21@m.fudan.edu.cn}

\subjclass{Primary 47B34, 47G10.}
\keywords{Bergman projection, integral operator, unit disk, weighted Bergman spaces, logarithmically subharmonic weight.}

\maketitle

\begin{abstract}
Let $\nu$ be a positive measure on $[0,1]$. A Shimorin-type operator $T_\nu$ is an integral operator on the unit disk given by  
\[
T_\nu f(z) = \int_{\mathbb{D}} \frac{1}{1 - z\overline{\lambda}} \left( \int_0^1 \frac{d\nu(r)}{1 - r z\overline{\lambda}} \right) f(\lambda)\,dA(\lambda),
\]
which originates from Shimorin’s work on Bergman-type kernel representations for logarithmically subharmonic weighted Bergman spaces.  

In this paper, we study $L^p$–$L^q$ estimates for $T_\nu$.  
Unlike classical Bergman-type operators, the critical line on the $(1/p,1/q)$-plane that separates the boundedness and unboundedness regions of $T_\nu$ is not immediately evident. Moreover, even along this line, new phenomena arise. In the present work, by introducing a quantity $c_\nu$,

\begin{itemize}
    \item we first determine the critical boundary in the $(1/p,1/q)$-plane for bounded $T_\nu$;  
    \item furthermore, on this critical line, we establish necessary and sufficient conditions for $T_\nu$ which have standard Bergman-type $L^p$–$L^q$ estimates, meaning that it is bounded in the interior of the region and admits weak-type and BMO-type estimates at endpoints.
\end{itemize}
\end{abstract}

\setcounter{tocdepth}{1}

\tableofcontents

\setcounter{equation}{0}

\section{Introduction}
The Bergman-type operator on the unit disk $\mathbb{D}$ of the complex plane $\mathbb{C}$
\begin{align*}
K_\alpha f(z)=\int_{\mathbb{D}}\frac{f(\lambda)}{(1-z\overline{\lambda})^\alpha}dA(\lambda)
\end{align*}
is recognized as a complex Riesz potential on the unit disk, and the case $\alpha=2$ corresponds to the Bergman projection
\begin{align*}
K_2f(z)=Pf(z)=\int_{\mathbb{D}}\frac{f(\lambda)}{(1-z\overline{\lambda})^2}dA(\lambda)
\end{align*}
which serves as a toy example of a Calderón-Zygmund operator on spaces of homogeneous type (see, e.g., \cite{CRW}, \cite{DHZZ}). These operators are among the most extensively studied objects in the operator theory of analytic function spaces (see, e.g., \cite{lue87}, \cite{Zhu14}).

Let \( \nu \) be a finite positive Borel measure on \( [0,1] \). In this paper, we study the $L^p$–$L^q$ estimates for the Shimorin-type integral operator
\begin{align*}
    T_\nu f(z) = \int_{\mathbb{D}} \frac{1}{1 - z\overline{\lambda}} \left( \int_0^1 \frac{d\nu(r)}{1 - r z\overline{\lambda}} \right) f(\lambda)\,dA(\lambda).
\end{align*}
When \( \nu = \delta_0 \), \( T_\nu \) reduces to \( K_1 \) induced by the Hardy kernel; when \( \nu = \delta_1 \), it recovers the Bergman projection \( P \). Here, \( dA \) denotes the normalized Lebesgue measure on $\mathbb{D}$, and \( \delta_x \) (for \( x \in \{0,1\} \)) is the Dirac measure concentrated at \( x \). Unlike Bergman-type operators (see Figures \ref{alpha2} and \ref{alpha3}), the region of boundedness for Shimorin-type operators lacks a clear-cut boundary. Moreover, even along the critical line, new phenomena still arise.

The integral kernel of $T_\nu$ is denoted by
\begin{align}\label{sio-kernel}
K_{\nu}(z,\lambda) = \frac{1}{1 - z\overline{\lambda}}\int_0^1 \frac{1}{1 - rz\overline{\lambda}} \,d\nu(r).
\end{align}
Established by Shimorin \cite{Shi01}, this kernel is related to Bergman-type kernel representations of weighted Bergman spaces induced by logarithmically subharmonic weights on the unit disk \cite[Problem~19]{AHR}. As noted in \cite{AHR}, the problem was suggested by Shimorin. The representation \eqref{sio-kernel} gives an affirmative answer for radial weights, while the case of non-radial weights remains open. Recently, Peláez, Rätty\"a, and Wick \cite{PRW} proved that for a class of radially doubling weights, the reproducing kernel also admits a Shimorin-type integral representation.

Recall that a weight $\omega$ is a positive integrable function on the unit disk $\mathbb{D}$. It is said to be radial if $\omega(z)=\omega(|z|)$ for any $z\in\mathbb{D}$.
A weight $\omega$ (not necessarily radial) is called logarithmically subharmonic and reproducing at the origin if it satisfies
\begin{itemize}
    \item $\omega$ is logarithmically subharmonic, i.e., $\log \omega$ is a subharmonic function on $\mathbb{D}$; 
    \item $\omega$ is reproducing at the origin: for all polynomials $f\in\mathbb{C}[z]$, 
    $$
    f(0) = \int_{\mathbb{D}} f(z) \omega(z) \, dA(z).
    $$
\end{itemize}
Typical examples include $\omega(z) = |G(z)|^p$, where $G$ is an inner function in the Bergman space, and the standard weights
$
\omega_\alpha(z) = (\alpha+1)(1 - |z|^2)^{\alpha}
$
for $-1 < \alpha \leq 0$. Logarithmically subharmonic weights have a natural differential geometric interpretation. For such a weight $\omega$, if we equip the unit disk with the Riemannian metric $\sqrt{\omega(z)}|dz|$, the resulting Riemannian manifold is hyperbolic (i.e., its Gaussian curvature is negative everywhere). This weight plays a role in the study of the squared Laplace-Beltrami operator $\Delta \omega^{-1} \Delta$ on the resulting Riemannian manifold (see, e.g., \cite{HJS02} and \cite[Chapter 9]{HKZ}). Further results and applications related to logarithmically subharmonic weights can be found in \cite{Shi01} for the wandering subspaces problem; in \cite{HS02} for Hele--Shaw flow on hyperbolic surfaces; and in \cite{Hed00} for off-diagonal estimates of Bergman kernels.

In this paper, we revisit $L^p$--$L^q$ estimates for the Shimorin-type integral operator $T_\nu$ with an arbitrary finite positive Borel measure $\nu$ on $[0,1]$. We also note that the integral kernels are not, in general, the reproducing kernels of weighted Bergman spaces. The main question addressed in this paper is:
\begin{question}\label{pq-bound}
Let \( \nu \) be a finite positive Borel measure on \( [0,1] \). Characterize all pairs \((p,q) \in [1,\infty] \times [1,\infty]\) such that the Shimorin-type integral operator
\[
  T_\nu : L^p(\mathbb D) \longrightarrow L^q(\mathbb D)
\text{ is bounded.}
\]
\end{question}

Our motivation for investigating Question \ref{pq-bound} stems from the recent work of Cheng, Fang, Wang and Yu \cite{CFWY}, where the authors provide complete characterizations of the $L^p$-$L^q$ boundedness of Bergman-type operators for all real numbers $\alpha>0$. By comparing it with the standard Bergman projection, they classify $K_\alpha$ as hyper-singular for $2<\alpha < 3$ and sub-singular for $0<\alpha\leq 2$. Their results can be summarized by the $(1/p,1/q)$-graphs in Figure \ref{alpha2} and Figure \ref{alpha3}, where the red line separates bounded and unbounded $K_\alpha$. In the sub-singular case, the red line in Figure \ref{alpha2} is given by
\begin{align*}
\left\{ 
  \left( \frac{1}{p}, \frac{1}{q} \right) 
  :\ 1 \leq p \leq \frac{2}{2-\alpha},\ \frac{1}{q} = \frac{1}{p} + \frac{\alpha}{2} - 1 
\right\}.
\end{align*}
The points in the interior of the red line correspond to bounded $K_\alpha$, and they are unbounded at endpoints. In the hyper-singular case, every point along the red line yields an unbounded $K_\alpha$.
\vspace{-0.7cm}
\begin{center}
\begin{figure}[htbp]
    \centering
    %\vspace{-0.7cm}
    % 统一设置全局样式，确保两图视觉参数一致
      \tikzset{
        dot/.style={circle, fill, inner sep=0.25pt}, % 实心点（保持原有大小）
        hollow dot/.style={
            circle, 
            draw=red,        % 红色边框（高对比度）
            thick,           % 加粗边框（比默认粗2倍）
            inner sep=0.4pt  % 增大空心圆尺寸（原0.25pt→0.4pt）
        },
        base line/.style={gray!70, line width=0.6pt},
        red line/.style={red, line width=0.8pt},
        fill area/.style={gray!20}
    }
    
    % 左侧 Figure 1（α=1.5，0<α≤2）
    \begin{minipage}[t]{0.48\textwidth}
        \centering
        % 强制统一图形尺寸：固定宽高，确保与右侧一致
        \begin{tikzpicture}[scale=3, baseline={(0,0)}]
            % 1. 绘制单位矩形（统一边界）
            
            \fill[fill area] (0,0) -- (0,1) -- (1,1) -- (1,0.75) -- (0.25,0) -- cycle;
            \draw[base line] (0,0) rectangle (1,1);

            % 2. 标记矩形顶点（位置/样式完全统一）
            %\fill[dot] (0,1) node[left] {$(0,1)$};
            \fill[dot] (1,1) node[right] {$(1,1)$};
            %\fill[dot] (1,0) node[right] {$(1,0)$};
            \fill[dot] (0,0) node[left] {$(0,0)$};
            
            % 3. 自定义红色点（位置精准，样式统一）
            \node[circle, draw=red, thick, inner sep=1pt] at (1,0.75) {}; 
            \node[right, xshift=0cm, yshift=0.0cm] at (1,0.75) {$(1,\alpha/2)$};
            %\fill[dot, red] (1,0.75) node[right] {$(1,1.5/2)$};
            %\fill[dot, red] (0.25,0) node[below] {$((2-1.5)/2,0)$};
            %\fill[dot, red] (0.25,0) node[below, xshift=0.3cm, yshift=0.9cm] {$((2-\alpha)/2,0)$};
          \node[circle, draw=red, thick, inner sep=1pt] at (0.25,0) {}; 
          \node[below, xshift=0.3cm, yshift=0.0cm] at (0.25,0) {$(1-\alpha/2,0)$}; % 标注文字
            
            % 4. 红色连线
            \draw[red line] (0.25,0) -- (1,0.75);
            
            % 5. 填充灰色区域
            %\fill[fill area] (0,0) -- (0,1) -- (1,1) -- (1,0.75) -- (0.25,0) -- cycle;
            
            % 强制统一边界框：覆盖所有元素，确保尺寸与右侧完全一致
            \useasboundingbox (0,-0.2) rectangle (1.8,1.2);
        \end{tikzpicture}
        \caption{$K_\alpha\ (0 < \alpha \leq 2)$}
        \label{alpha2}
    \end{minipage}
    \hspace{-0.1\textwidth} % 极小间距，保持紧凑
    % 右侧 Figure 2（α=2.5，2<α<3）
    \begin{minipage}[t]{0.48\textwidth}
        %\centering
        % 完全匹配左侧的scale和baseline，确保尺寸/对齐一致
        \begin{tikzpicture}[scale=3, baseline={(0,0)}]
            % 1. 绘制单位矩形（与左侧完全相同）
            
            \fill[fill area] (0,0.5) -- (0,1) -- (0.5,1) -- cycle;
            \draw[base line] (0,0) rectangle (1,1);
            
            % 2. 标记矩形顶点（位置/样式与左侧一一对应）
            %\fill[dot] (0,1) node[left] {$(0,1)$};
            \fill[dot] (1,1) node[right] {$(1,1)$};
            %\fill[dot] (1,0) node[right] {$(1,0)$};
            \fill[dot] (0,0) node[left] {$(0,0)$};
            
            % 3. 自定义红色点（位置精准，样式统一）
            %\fill[dot, red] (0,0.5) node[left] {$(0,2.5-2)$};
                   \node[circle, draw=red, thick, inner sep=1pt] at (0,0.5) {}; 
          \node[left, xshift=0cm, yshift=0.0cm] at (0, 0.5) {$(0, \alpha-2)$};
            
            %\fill[dot, red] (0.5,1) node[above] {$(3-2.5,1)$};
              \node[circle, draw=red, thick, inner sep=1pt] at (0.5,1) {}; 
          \node[above, xshift=0cm, yshift=0.0cm] at (0.5,1) {$(3-\alpha,1)$};
          
            % 4. 红色虚线连线（仅线型不同，其他参数与左侧红线一致）
            \draw[red line, dashed] (0,0.5) -- (0.5,1);
            
            % 5. 填充灰色区域
            
            % 强制统一边界框：与左侧完全相同，确保整体尺寸一致
            \useasboundingbox (0,-0.2) rectangle (1.8,1.2);
        \end{tikzpicture}
        \vspace{2.5pt}
        \caption{$K_\alpha\ (2 < \alpha < 3)$}
        \label{alpha3}
    \end{minipage}
\end{figure}
\end{center}
\vspace{-1cm}
A more general form of the Bergman-type integral operator is known in the literature as Forelli-Rudin type operators, which have a long history and have been extensively studied; see \cite{ForelliRudin1974}, \cite{Liu2015}, \cite{Zhao2015}, \cite{Kaptanoglu2019}, \cite{ZhaoZhou2022}, \cite{Zhu2022} for more details.

Figures \ref{alpha2} and \ref{alpha3} reveal that the parameter $\alpha$ plays a crucial role in identifying the boundary of the $(1/p,1/q)$-graph such that the Bergman-type operator $K_\alpha:L^p \to L^q$ is bounded. Such a parameter does not explicitly appear in the Shimorin-type operator. To resolve this issue, we introduce a quantity of the measure $\nu$, denoted by $c_\nu$, as follows:
\begin{align}\label{critical-index}
c_\nu = \sup\left\{1 \le c < 2: \int_{0}^{1} \frac{1}{(1 - r^2)^{ 2/c'}} \,d\nu(r) < \infty\right\},
\end{align}
where $c'$  denotes the dual index of $c$, namely $1/c+1/c'=1$. We note that $c_{\delta_1}=1$, $c_{\delta_0}=2$ and $c_\nu=2$ if $\nu$ is the Lebesgue measure. Moreover, if $\nu(\{1\})\not=0$, then $c_\nu=1$.

For any finite positive Borel measure $\nu$ on $[0,1]$, we have a universal estimate
\[
\left|\frac{1}{1 - z\overline{\lambda}}\int_0^1 \frac{1}{1 - rz\overline{\lambda}} \,d\nu(r)\right|\leq\frac{2\nu([0,1])}{|1-z\overline{\lambda}|^2}.
\]
Hence, the Shimorin-type operator is sub-singular in the sense of \cite{CFWY}. On the other hand, by \cite[Theorem 1]{S}, for every $\alpha\in (1,2)$, 
\[
\frac{1}{(1-z\overline{\lambda})^\alpha}=\frac{1}{1 - z\overline{\lambda}}\int_0^1 \frac{1}{1 - rz\overline{\lambda}} \,d\nu_\alpha(r),
\]
where
\begin{align}\label{nualpha}
\nu_\alpha=\frac{r^{\alpha-2}(1 - r)^{-\alpha+1}}{\Gamma(\alpha - 1)\Gamma(-\alpha + 2)}dr.
\end{align}
%We shall include details of determining the measure \eqref{nualpha} in Subsection \ref{section:nualpha}. 
This suggests that the Shimorin-type operator $T_\nu$ exhibits $L^p$-$L^q$ estimates in a similar manner to the sub-singular Bergman-type operator. By \eqref{nualpha}, for $\alpha\in(1,2)$, we observe that 
$
c_{\nu_\alpha}=2/\alpha. 
$
Then the red line in Figure \ref{alpha2} can be written as
\[
\left\{\left(\frac{1}{p},\frac{1}{q}\right): 
1 \leq p \leq c_{\nu_\alpha}', \quad 
\frac{1}{q} = \frac{1}{p} + \frac{1}{c_{\nu_\alpha}} - 1 
\right\}.
\]
Combining this with the sub-singularity of Shimorin-type kernels, we are naturally led to the following conjectures:

\begin{conjecture}
The boundary line $\mathcal{C}$ between the $L^p$-$L^q$ boundedness and unboundedness regions of $T_\nu$ is given by
    \begin{align*}
     \mathcal{C} = \left\{ \left( \frac{1}{p},\frac{1}{q} \right) : 1 \leq p \leq c_\nu', \ \frac{1}{q} = \frac{1}{p} + \frac{1}{c_\nu} - 1 \right\}.
    \end{align*}
\end{conjecture}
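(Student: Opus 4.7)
The plan is to establish the conjecture in two complementary parts. First I would show that $T_\nu : L^p(\mathbb{D}) \to L^q(\mathbb{D})$ is bounded whenever $(1/p, 1/q)$ lies strictly above the line $\mathcal{C}$ (together with the trivial corners of the unit square). Second I would construct explicit witnesses of unboundedness whenever $(1/p, 1/q)$ lies strictly below $\mathcal{C}$. The kernel $K_\nu(z,\lambda)$ is symmetric under $z \leftrightarrow \lambda$, so the adjoint $T_\nu^*$ is a Shimorin-type operator with the same measure $\nu$. Duality therefore exchanges $(1/p, 1/q) \leftrightarrow (1-1/q, 1-1/p)$; since this transformation preserves the slope-$1$ line $\mathcal{C}$, the work reduces by symmetry to one half of the square.

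For the boundedness half, the central step is a refined pointwise kernel estimate of the form
\begin{equation*}
|K_\nu(z,\lambda)| \;\lesssim_c\; \frac{1}{|1-z\overline{\lambda}|^{2/c}},
\qquad 1 \leq c < c_\nu,
\end{equation*}
with implicit constant blowing up as $c \uparrow c_\nu$. I would derive this by splitting the inner integral $\int_0^1 (1-rz\overline\lambda)^{-1}\,d\nu(r)$ via H\"older with exponents $c$ and $c'$, extracting the finite factor $(\int_0^1 (1-r^2)^{-2/c'} d\nu(r))^{1/c'}$ granted by the definition \eqref{critical-index}, and estimating the remaining factor through the elementary bound $|1-rz\overline{\lambda}| \gtrsim \max\{1-r,\,|1-z\overline{\lambda}|\}$ followed by the standard one-variable integral asymptotics. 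Once this kernel estimate is in hand, $T_\nu$ is majorized by the sub-singular Bergman-type operator $K_{2/c}$, and the $L^p$--$L^q$ region of Figure~\ref{alpha2} (established in \cite{CFWY}) applies. Taking $c \uparrow c_\nu$ then sweeps out the entire open region above $\mathcal{C}$. The technical heart here is an $L^p$ Schur test with weights of the form $h_i(z) = (1-|z|^2)^{-a_i}$, whose exponents $a_1, a_2$ are chosen by the Forelli--Rudin integral formulas so that both Schur conditions hold precisely when $(1/p,1/q)$ sits above $\mathcal{C}$.

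For the unboundedness half, I would test $T_\nu$ on the natural family $f_s(\lambda) = (1-|\lambda|^2)^s$ (or its Carleson-box analogue $\chi_{S_n}$ near a boundary point, depending on which side of the diagonal is being handled). Computing $T_\nu f_s(z)$ via Fubini reduces the matter to a product of two explicit Forelli--Rudin-type integrals in $\lambda$ followed by a $\nu$-integral in $r$, and the divergence of $\int_0^1 (1-r^2)^{-2/c'} d\nu(r)$ for $c > c_\nu$ forces $T_\nu f_s(z)$ to grow at least like $(1-|z|^2)^{s+2-2/c_\nu}$ up to a slowly varying defect, which is large enough to violate the target $L^q$ bound whenever $1/q < 1/p + 1/c_\nu - 1$. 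A parameter sweep in $s$ (or in the Carleson-box scale) yields the full unboundedness region strictly below $\mathcal{C}$.

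The main obstacle I anticipate lies at the critical exponent $c = c_\nu$ itself: the defining supremum in \eqref{critical-index} need not be attained, so the H\"older split above degenerates and the Schur argument must be pushed into the strict interior via a clean $\epsilon$-absorption, while the lower bound requires a matching quantitative control of the tail of $\nu$ near $r=1$. Reconciling the two sides at the endpoint — and exhibiting the fact that generic points of $\mathcal{C}$ can be approached from the bounded side but not reached — is precisely the delicate behavior the paper records in its endpoint analysis, and I would therefore treat the strictly open regions above and below $\mathcal{C}$ first, and defer the on-line endpoint phenomena to the subsequent sections.
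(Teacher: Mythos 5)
Your sufficiency half is sound and takes a genuinely different route from the paper. From $|1-rz\overline{\lambda}|\ge\max\{1-r,\tfrac12|1-z\overline{\lambda}|\}$ one does get the pointwise bound $|K_\nu(z,\lambda)|\le C_c\,|1-z\overline{\lambda}|^{-2/c}$ for every $1\le c<c_\nu$ (interpolating the maximum with weight $2/c'$; no H\"older is really needed), and dominating $T_\nu$ by the positive operator with kernel $|1-z\overline{\lambda}|^{-2/c}$, then letting $c\uparrow c_\nu$, sweeps out the whole region $1/q>1/p+1/c_\nu-1$ via classical Schur/Forelli--Rudin and Hardy--Littlewood--Sobolev facts. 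The paper instead proves two-sided $L^p$-norm estimates for $K_\nu(z,\cdot)$ (Proposition \ref{pnorm of K}, Corollary \ref{Kp-norm}) and invokes Lemmas \ref{Tao} and \ref{Taoestimate}; both routes work, yours avoids the kernel-norm computations but needs the separate case $c_\nu=1$, where the sweep is empty and one must simply use $c=1$, i.e.\ the universal bound \eqref{universal_estimates}.

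The unboundedness half, however, has a genuine gap. Your primary test family $f_s(\lambda)=(1-|\lambda|^2)^s$ is radial while $K_\nu(z,\cdot)$ is anti-holomorphic in $\lambda$, so $T_\nu f_s\equiv\nu([0,1])\int_{\mathbb{D}}(1-|\lambda|^2)^s\,dA$ is a \emph{constant}: there is no growth at all, and the claimed rate $(1-|z|^2)^{s+2-2/c_\nu}$ is false for these functions. More fundamentally, the asserted mechanism --- that divergence of $\int_0^1(1-r^2)^{-2/c'}\,d\nu$ for $c>c_\nu$ ``forces'' a pointwise growth up to a slowly varying defect --- cannot be justified: $c_\nu$ is defined by a supremum in \eqref{critical-index}, so supercritical divergence only says the tail of $\nu$ is large along \emph{some} sequence of scales (e.g.\ $\nu$ supported on a sparse lacunary set of radii), and yields no scale-uniform lower bound on $\nu([1-t,1])$ or on $\int_0^1 d\nu(r)/((1-r)+t)$. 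The argument that does work is the contrapositive with scale selection, which is what the paper carries out: assuming $T_\nu:L^p(\mathbb{D})\to L^q(\mathbb{D})$ bounded, test on $\mathbf{1}_{E_t}$ and use the \emph{real-part} positivity of the kernel on $E_t\times E_t$ (Lemma \ref{l-estimate of real part of K}; size estimates alone do not rule out cancellation in the complex kernel) to get $\|T_\nu\mathbf{1}_{E_t}\|_{L^q(\mathbb{D})}\gtrsim t^{2/q}\,\nu([1-t,1])$, deduce the uniform Carleson decay $\nu([1-t,1])\lesssim t^{2/p-2/q}$ for all $t$, and then sum dyadically to contradict the definition of $c_\nu$ whenever $2/p-2/q>2/c_\nu'$ (the paper does this with boxes for $c_\nu\in\{1,2\}$ and, for $1<c_\nu<2$, via the multiplier sequence $m_n$, Proposition \ref{prop:mn_limsup_estimate} and the BKV coefficient criterion, together with the $p=1$ kernel-norm argument and duality for the remaining cases). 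Your sketch contains neither the box/real-part lower bound nor this contradiction step, so the region below $\mathcal{C}$ is not actually handled.
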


\begin{conjecture}\label{pqboundary}
The operator $T_\nu:L^p(\mathbb{D})\to L^q(\mathbb{D})$ is bounded on the interior of the boundary line $\mathcal{C}$, while it is not bounded at the endpoints of the boundary line. 
\end{conjecture}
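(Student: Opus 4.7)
My plan is to handle interior boundedness and endpoint unboundedness separately.

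\textbf{Interior.} For $(1/p, 1/q)$ in the open interior of $\mathcal{C}$, I would apply Schur's test with the standard weight $h(w) = (1-|w|^2)^{-s}$, where $s$ is tuned to $p$, $q$, and $c_\nu$. Using Fubini, verifying the Schur conditions reduces to controlling integrals of the form
\[
\int_0^1 \left[\int_{\mathbb{D}} \frac{(1-|\lambda|^2)^{-\sigma}}{|1-z\overline{\lambda}|\,|1-rz\overline{\lambda}|}\,dA(\lambda)\right] d\nu(r)
\]
for a $\sigma$ determined by $s$, $p$, $q$. I would estimate the inner integral via the pointwise interpolation
\[
|1-rz\overline{\lambda}| \gtrsim (1-r)^{\theta}\,|1-z\overline{\lambda}|^{1-\theta}, \qquad r \in [1/2,1],\ \theta \in [0,1],
\]
which follows from $|1-rw|^2 \ge (1-r)^2 + r^2|1-w|^2$ for $|w|\le 1$, combined with a standard Forelli--Rudin estimate (the small-$r$ range is handled trivially since the kernel is bounded there). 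The resulting bound is $(1-r)^{-\theta}(1-|z|^2)^{-t}$ for a suitable $t$, and the outer $r$-integration against $d\nu$ is finite precisely when $\theta < 2/c_\nu'$ by \eqref{critical-index}; pushing $\theta \uparrow 2/c_\nu'$ aligns the resulting power of $(1-|z|^2)$ with the critical relation $1/q = 1/p + 1/c_\nu - 1$, and the dual Schur inequality is handled symmetrically.

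\textbf{Endpoints.} Here the plan is explicit test functions. At $(1/p,1/q) = (1, 1/c_\nu)$, I would take normalized $L^1$-bumps $f_n = |B_n|^{-1}\chi_{B_n}$ centered at points $\lambda_n \in \mathbb{D}$ with $|\lambda_n| \to 1$; then $T_\nu f_n(z) \to K_\nu(z,\lambda_n)$, and via the disk automorphism $z \mapsto \varphi_{\lambda_n}(z)$ I would estimate $\|K_\nu(\cdot,\lambda_n)\|_{L^{c_\nu}}$ directly, extracting divergence from the borderline behavior $\int_0^1(1-r^2)^{-2/c_\nu'}\,d\nu(r)=\infty$ at the threshold $c=c_\nu$. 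At $(1/p,1/q)=(1/c_\nu',0)$, I would test against $f(\lambda) = (1-|\lambda|^2)^{-1/c_\nu'}\bigl(\log(e/(1-|\lambda|^2))\bigr)^{-\beta}$ with $\beta$ just above $1/c_\nu'$, so that $f$ lies barely in $L^{c_\nu'}$; evaluating $T_\nu f(z)$ for $|z|\to 1$ and reducing to a radial integral again produces divergence governed by the same borderline behavior of $\nu$.

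The main obstacle is the borderline case in which the supremum in \eqref{critical-index} is not attained---for instance when $\nu$ is Lebesgue measure. Plugging $c = c_\nu$ directly into the Schur bound then fails because the $r$-integral diverges logarithmically, so a logarithmic refinement of the pointwise interpolation inequality is needed to reach the \emph{exact} critical line $\mathcal{C}$ rather than only a line strictly above it, while simultaneously ensuring that the endpoint test functions still force strong-type unboundedness on the boundary of $\mathcal{C}$. Balancing these two requirements---tight interior boundedness versus sharp endpoint failure---is the most delicate step of the proof.
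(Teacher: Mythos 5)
Your proposal sets out to prove the conjecture, but the conjecture is false, and the paper's ``resolution'' of it is a refutation, not a proof. Whether $T_\nu$ is bounded at an \emph{interior} point of $\mathcal{C}$ is not automatic: by Theorem \ref{weak bd on the line} (equivalently Theorem \ref{thm:new2}), for $1\le c_\nu<2$ boundedness on the open part of the critical line is \emph{equivalent} to the $2/c_\nu'$-Carleson condition \eqref{carleson-type-0}, and for $c_\nu=2$ to the hyperbolic integrability \eqref{hyper-type-0}. For Lebesgue measure $\nu=dr$ one has $c_\nu=2$ but $\int_0^1(1-r)^{-1}\,dr=\infty$, so $T_\nu:L^p(\mathbb{D})\to L^q(\mathbb{D})$ is unbounded at \emph{every} interior point of $\mathcal{C}$. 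The ``main obstacle'' you flag at the end --- the logarithmic divergence of the $r$-integral when the supremum in \eqref{critical-index} is not attained --- is therefore not a technical nuisance to be fixed by a logarithmic refinement of your pointwise interpolation inequality; it is precisely the mechanism by which interior boundedness genuinely fails, as the paper's necessity argument (indicator test functions on the boxes $E_t$ combined with the kernel lower bounds of Lemma \ref{l-estimate of real part of K}) makes rigorous. No Schur weight can produce a bound that Theorem \ref{weak bd on the line} shows does not exist.

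Your endpoint argument has the dual defect: it presupposes that $\int_0^1(1-r^2)^{-2/c_\nu'}\,d\nu(r)=\infty$, but the supremum in \eqref{critical-index} can be attained. If $\int_0^1(1-r)^{-2/c_\nu'}\,d\nu(r)<\infty$, then Corollary \ref{Kp-norm} gives $\sup_{z\in\mathbb{D}}\|K_\nu(z,\cdot)\|_{L^{c_\nu}(\mathbb{D})}<\infty$, Lemma \ref{Tao} yields boundedness of $T_\nu:L^1(\mathbb{D})\to L^{c_\nu}(\mathbb{D})$, and the duality argument of Subsection \ref{S:pbig1} then gives boundedness of $T_\nu:L^{c_\nu'}(\mathbb{D})\to L^{\infty}(\mathbb{D})$ as well; the paper notes that such measures exist, so the asserted endpoint unboundedness is also false in general. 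What is actually true on $\mathcal{C}$ is the characterization of Theorems \ref{weak bd on the line}, \ref{weak bd} and \ref{dual of weak 1 alpha}: standard Bergman-type behavior holds exactly under the conditions of Theorem \ref{thm:new2}, and away from $\mathcal{C}$ the correct dichotomy is Theorem \ref{pq-necessary}. If you wish to salvage your computations, the Forelli--Rudin/Schur-type estimates could be redirected toward the sufficiency of the Carleson and hyperbolic-integrability conditions (the paper instead verifies size and smoothness bounds, Lemmas \ref{u-estimate of K} and \ref{u-estimate of K-2}, and invokes Proposition \ref{alpha Bergman CZO}), while your localized bump functions are close in spirit to the $E_t$ test functions used for the necessity direction.
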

This paper resolves both conjectures. The first one is Theorem \ref{pq-necessary}. 
%\smallskip
\begin{theorem}\label{pq-necessary}
Let $\nu$ be a finite positive Borel measure on $[0,1]$ and $(p,q)\in [1,\infty]^2\setminus \mathcal{C}$.
Then $T_{\nu} : L^p(\D) \to L^q(\D)$ is bounded if and only if $(p,q)$ satisfies one of the following conditions:
    \begin{itemize}
    \item[(a)] $p=1$, $1\leq q < c_{\nu}$;
    \item[(b)] $1 < p < c_{\nu}'$, and
    \[
    \frac{1}{q} > \frac{1}{p} + \frac{1}{c_{\nu}} - 1;
    \]
    \item [(c)] $p = c_{\nu}'$, $1\leq q < \infty$;
    \item [(d)] $p > c_{\nu}'$, $1 \le q \le \infty$.
  \end{itemize}
\end{theorem}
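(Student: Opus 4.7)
The plan is to reduce the analysis of $T_\nu$ to that of the Bergman-type operators $K_\alpha$ with $\alpha = 2/c$ for $c$ just below $c_\nu$, via a sharp pointwise kernel estimate, and then exploit the known $L^p$--$L^q$ characterization of $K_\alpha$ from \cite{CFWY}. Sufficiency will follow from pointwise domination; necessity will require test functions calibrated to the threshold $c_\nu$.

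First I would establish the pointwise bound $|K_\nu(z,\lambda)| \lesssim |1-z\bar\lambda|^{-2/c}$ for every $c \in [1, c_\nu)$. This rests on the elementary inequality $|1-rw| \gtrsim (1-r)+|1-w|$, uniform for $r \in [0,1]$ and $w \in \overline{\mathbb{D}}$, obtained by expanding $|1-rw|^2 = (1-r)^2 + 2r(1-r)(1-\Re w) + r^2|1-w|^2$, combined with the AM--GM bound $(1-r)+|1-w| \geq (1-r)^{2/c'}|1-w|^{1-2/c'}$. Integrating against $d\nu(r)$ and using $\int(1-r)^{-2/c'}d\nu(r) < \infty$ for $c < c_\nu$ then yields the desired kernel estimate, reducing $K_\nu$ to a Bergman-type kernel with $\alpha = 2/c \in (1,2]$.

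For sufficiency in each of cases (a)--(d), I would select $c < c_\nu$ close enough to $c_\nu$ so that $(1/p, 1/q)$ lies strictly in the $L^p$--$L^q$ boundedness region of $K_{2/c}$ depicted in Figure \ref{alpha2}. In (a), with $p = 1$ and $q < c_\nu$, any $c \in (q, c_\nu)$ places $(1, 1/q)$ strictly above the critical line of $K_{2/c}$. In (b), continuity gives $c < c_\nu$ with both $p < c'$ and $1/q > 1/p + 1/c - 1$. Cases (c) ($p = c_\nu'$, $q < \infty$) and (d) ($p > c_\nu'$, $q \in [1, \infty]$) are handled analogously by taking $c$ close enough to $c_\nu$ that $p < c'$ (respectively $p > c'$). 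In each case, the pointwise domination upgrades the boundedness of $K_{2/c}$ to that of $T_\nu$.

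The core difficulty lies in necessity outside $\mathcal{C}$, where the bad subregions are $p = 1$ with $q > c_\nu$, and $1 < p < c_\nu'$ with $1/q < 1/p + 1/c_\nu - 1$. For the first, I would exploit the conjugate symmetry $\overline{K_\nu(\lambda, z)} = K_\nu(z, \lambda)$, which makes $T_\nu$ self-adjoint; testing on approximate $\delta$-functions forces $\sup_{\lambda_0 \in \mathbb{D}} \|K_\nu(\cdot, \lambda_0)\|_{L^q} < \infty$ as a necessary condition, which I would then disprove by restricting the $L^q$-norm to a Stolz-type region around $\lambda_0/|\lambda_0|$ on which $|K_\nu(z, \lambda_0)|$ is comparable to the positive radial value $K_\nu(\sigma,\rho) = (1-\sigma\rho)^{-1}\int(1-r\sigma\rho)^{-1}d\nu(r)$, and showing divergence for $q > c_\nu$ from $\int(1-r)^{-2/c'}d\nu = \infty$ when $c > c_\nu$. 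For the second bad subregion, I would test on the analytic family $f_{z_0}(\lambda) = (1-|z_0|^2)^{b-2/p}(1-\bar z_0 \lambda)^{-b}$ with $b > 2/p$ (normalized so $\|f_{z_0}\|_p$ is uniformly bounded), using the partial-fraction identity $\tfrac{1}{(1-z\bar\lambda)(1-rz\bar\lambda)} = \tfrac{1}{1-r}\bigl(\tfrac{1}{1-z\bar\lambda} - \tfrac{r}{1-rz\bar\lambda}\bigr)$ and a standard Forelli--Rudin integral to obtain an explicit expression for $T_\nu f_{z_0}(z)$, then estimating its $L^q$-norm from below to exhibit blow-up as $|z_0| \to 1^-$. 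The hard part will be executing these lower bounds cleanly: the challenge is to translate the integrability threshold defining $c_\nu$ (a condition on $\nu$ alone) into a precise growth rate for the kernel integrals that matches the critical exponent exactly.
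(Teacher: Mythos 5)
Your proposal is essentially correct, but it follows a genuinely different route from the paper, most notably in the hard necessity direction. For sufficiency the paper does not dominate $K_\nu$ pointwise by $|1-z\overline{\lambda}|^{-2/c}$; it proves two-sided $L^p$-norm estimates for the kernel slices $K_\nu(z,\cdot)$ (Proposition \ref{pnorm of K}, Corollary \ref{Kp-norm}) and feeds them into Young/Schur-type lemmas (Lemmas \ref{Tao}, \ref{Taoestimate}). Your pointwise bound is valid (it is the paper's inequality \eqref{pointwise_lower_bound} plus weighted AM--GM, and $\int(1-r)^{-2/c'}d\nu<\infty$ for every $c<c_\nu$ by monotonicity of the defining condition), but note that domination transfers boundedness of the \emph{positive} operator with kernel $|1-z\overline{\lambda}|^{-2/c}$, not of CFWY's analytic $K_{2/c}$ itself; you should invoke Forelli--Rudin/Schur/HLS estimates for the modulus kernel (as the paper does in Proposition \ref{alpha Bergman CZO}) rather than cite \cite{CFWY} directly, and treat $c_\nu=1$ by taking $c=1$ (your interval $[1,c_\nu)$ is empty there). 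For necessity at $p=1$, your kernel-slice argument is essentially the paper's (Proposition \ref{pnorm of K} plus a dyadic/Carleson contradiction with the definition of $c_\nu$). The real divergence is in the region $1<p<c_\nu'$, $1/q<1/p+1/c_\nu-1$: the paper develops the coefficient-multiplier representation \eqref{tcm}, the sharp growth estimate $\limsup_n \log m_n/\log(n+1)=-s_0$ (Proposition \ref{prop:mn_limsup_estimate}), the Buckley--Koskela--Vukoti\'c coefficient criterion (Lemma \ref{Lpestimate}) and block test functions \eqref{testingft}, \eqref{testinggt}, with separate box-indicator arguments for $c_\nu\in\{1,2\}$; you instead propose the normalized family $f_{z_0}(\lambda)=(1-|z_0|^2)^{b-2/p}(1-\overline{z_0}\lambda)^{-b}$, which, if carried out, treats all $c_\nu\in[1,2]$ uniformly and avoids the multiplier machinery entirely — a simpler and arguably more transparent argument.

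The step you flag as hard does close, and here is the precise mechanism you are missing. Using $T_\nu f(z)=\int_0^1\frac{1}{1-r}\int_r^1 f(tz)\,dt\,d\nu(r)$ (your partial-fraction identity; this is the paper's Corollary \ref{T_{nu}formula}, valid once the atom at $1$ is split off — if $\nu(\{1\})>0$ the Bergman-projection part reproduces $f_{z_0}$ and already gives blow-up for $q>p$), one gets, with $\delta=1-|z_0|^2$ and $b>\max\{2/p,1\}$, the positive lower bound $T_\nu f_{z_0}(z_0)\gtrsim_b \delta^{1-2/p}\int_0^1\frac{d\nu(r)}{(1-r)+\delta}$, hence by the point-evaluation estimate \eqref{sub}, $\|T_\nu f_{z_0}\|_{L^q}\gtrsim \delta^{2/q+1-2/p}\int_0^1\frac{d\nu(r)}{(1-r)+\delta}$. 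If this supremum over $\delta$ were finite, then $\nu([1-\delta,1])\le 2\delta\int_0^1\frac{d\nu(r)}{(1-r)+\delta}\lesssim\delta^{1+\gamma}$ with $\gamma=2/p-2/q-1$, and dyadic summation gives $\int_0^1(1-r)^{-s}d\nu<\infty$ for every $s<1+\gamma$; since the hypothesis $1/q<1/p+1/c_\nu-1$ is exactly $1+\gamma>s_0=2/c_\nu'$, choosing $s\in(s_0,\min\{1,1+\gamma\})$ contradicts the definition of $c_\nu$ when $c_\nu<2$, while for $c_\nu=2$ (so $\gamma>0$) finiteness would force $\int_0^1\frac{d\nu(r)}{1-r}=0$, i.e.\ $\nu=0$. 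This also covers $q=\infty$, which the paper handles separately by duality. So your plan is sound; what remains is to write out these lower bounds and the Carleson/dyadic translation, which is the same elementary device the paper itself uses in its $p=1$ necessity case.
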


The critical boundary line for the $(1/p, 1/q)$-graph of the operator $T_\nu$ is determined by $c_\nu$ and is 
illustrated in Figure \ref{theorem 1 graph}.

%\vspace{-0.6cm}

\begin{center}
\begin{figure}[htbp]
\centering  
%\vspace{-1cm} 
\begin{tikzpicture}[scale=3.5, baseline={(1.5,1.5)},
  % 样式定义
  base line/.style={black, thin},    
  fill area/.style={gray!20},        
  dot/.style={circle, fill=black, inner sep=1pt}, 
  blue line/.style={blue, thick}, 
  red line/.style={red, thick},
  % 端点样式
  endpoint blue/.style={circle, fill=blue, inner sep=1.3pt},
  endpoint red/.style={circle, fill=red, inner sep=1.3pt}
]
  % 1. 填充阴影区域 (蓝色中线左上方的区域)
  % 定义蓝色线的参数 1/c_nu (例如取 0.7)
  \def\invc{0.7} 
  \def\invcprime{0.3} % 1 - 1/c_nu
  
  %\fill[fill area] (0,0) -- (0,1) -- (1,1) -- (1,\invc) -- (\invcprime,0) -- cycle;

  % 2. 绘制单位矩形边框
  \draw[base line] (0,0) rectangle (1,1);
  
  % 3. 绘制三条线段并标注端点
  
  % --- (A) c_nu = 1: 红色 (对角线) ---
  \draw[red line] (0,0) -- (1,1);
  \node[endpoint red] at (0,0) {};
  \node[endpoint red] at (1,1) {};
  \node[left=2pt] at (0,0) {$(0,0)$};
  \node[right=2pt] at (1,1) {$(1,1)$};
  \node[red, font=\small, rotate=45] at (0.45, 0.62) {$c_\nu=1$};

  % --- (B) 中间线: 蓝色 ---
  \draw[blue line] (\invcprime,0) -- (1,\invc);
  \node[endpoint blue] at (\invcprime,0) {};
  \node[endpoint blue] at (1,\invc) {};
  % 端点坐标标注
  \node[below=3pt] at (\invcprime - 0.05,0) {$(1/c_\nu', 0)$};
  \node[right=2pt] at (1,\invc) {$(1, 1/c_\nu)$};
  % 方程标注
  \node[blue, font=\small, rotate=45] at (0.6, 0.42) {$\frac{1}{q}=\frac{1}{p}+\frac{1}{c_\nu}-1$};
  
  % --- (C) c_nu = 2: 红色 ---
  \draw[red line] (0.5,0) -- (1,0.5);
  \node[endpoint red] at (0.5,0) {};
  \node[endpoint red] at (1,0.5) {};
  \node[below=3pt] at (0.7,0) {$(1/2, 0)$};
  \node[right=2pt] at (1,0.5) {$(1, 1/2)$};
  \node[red, font=\small, rotate=45] at (0.8, 0.2) {$c_\nu=2$};

  % 4. 坐标轴标签
  \node[below=12pt, font=\small] at (0.5,-0.1) {$1/p$};
  \node[left=12pt, font=\small] at (0,0.5) {$1/q$};
  
  % 5. 辅助标记 (0,1) 和 (1,0) 使图像完整
 % \fill[dot] (0,1) node[left=2pt] {$(0,1)$};
 % \fill[dot] (1,0) node[right=2pt] {$(1,0)$};
  
  % 6. 强制边界框
  \useasboundingbox (-0.3,-0.3) rectangle (1.4,1.2);
\end{tikzpicture}
\vspace{-0.5cm}
\caption{Critical boundary lines}
\label{theorem 1 graph}
\end{figure}
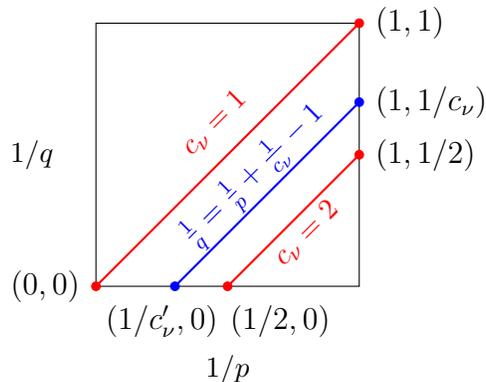
\end{center}

%\vspace{-0.5cm}

\begin{remark}
According to Theorem \ref{pq-necessary}, the boundary is defined by the line segment $$\frac{1}{q} = \frac{1}{p} + \frac{1}{c_\nu} - 1.$$ For the range $c_\nu \in [1, 2]$, the blue critical line always lies between the two red boundary lines:
\begin{itemize}
    \item The \textbf{upper red line} corresponds to $c_\nu = 1$, where the boundary coincides with the diagonal $1/q = 1/p$.
    \item The \textbf{lower red line} corresponds to $c_\nu = 2$, where the boundary is given by $1/q = 1/p - 1/2$.
\end{itemize}
Geometrically, as $c_\nu$ increases from $1$ to $2$, the blue line shifts downwards, indicating that the region of boundedness (the $(1/p, 1/q)$-graph) expands.
\end{remark}

Regarding Conjecture \ref{pqboundary}, consider the Lebesgue measure $\nu = dr$ on $[0,1]$. One can show that $T_\nu: L^p(\mathbb{D}) \to L^q(\mathbb{D})$ is not bounded for $1 < p < c_\nu'$ with the condition $$\frac{1}{q} = \frac{1}{p} + \frac{1}{c_\nu} - 1.$$ In addition, there exists a measure $\nu$ such that $T_\nu:L^1(\mathbb{D})\to L^{c_\nu}(\mathbb{D})$ is bounded. These examples are sharp, in contrast to the behavior of the sub-singular Bergman-type operator $K_\alpha$ for $0 < \alpha\leq 2$. Hence, Conjecture \ref{pqboundary} in general is not true, which naturally suggests the following problem. 
\begin{problem}\label{question-pqboundary}
What are the $L^p$-$L^q$ estimates for $(p,q)\in[1,\infty]\times[1,\infty]$ such that the index $(1/p,1/q)$ is in the boundary line $\mathcal{C}$ $($the blue line in Figure \ref{theorem 1 graph}$)$. 
\end{problem}
We give satisfactory answers to Problem \ref{question-pqboundary} which are partitioned into two cases:
\begin{itemize}
\item interior points of the blue line in Figure \ref{theorem 1 graph} and 
\item its corresponding endpoints. 
\end{itemize}
For the former case, we establish the $L^p$-$L^q$ estimate. For the latter case, the endpoints $(1, c_\nu)$ and $(c_\nu', \infty)$ form a dual pair. It is worth noting that the Bergman-type operator $K_\alpha$ fails to be bounded at these endpoints. By the theory of Riesz potential operators, $K_\alpha$ is of weak-type $(1, c_\nu)$. For the other endpoint $(c_\nu', \infty)$, the Fefferman-Stein $H^1$-BMO theory implies that the natural substitute for the target space $L^\infty$ is the BMO space. Along this line and a natural modified version of Problem \ref{question-pqboundary}, we use the following definition.

\begin{definition}
Let $\nu$ be a finite positive Borel measure on $[0,1]$, and let $c_\nu'$ denote the dual exponent of $c_\nu$. The Shimorin-type operator has a standard Bergman-type $L^p$–$L^q$ estimate if the following three conditions hold:
\begin{itemize}
    \item $T_{\nu}: L^1(\mathbb{D}) \to L^{c_{\nu},\infty}(\mathbb{D})$ is bounded.
    \item $T_\nu : L^p(\mathbb{D}) \to L^q(\mathbb{D})$ is bounded for $1 < p < c_{\nu}'$ with $q$ satisfying the relation
    \[
    \frac{1}{q} = \frac{1}{p} + \frac{1}{c_{\nu}} - 1.
    \]
    \item $T_{\nu}: L^{c_{\nu}'}(\mathbb{D}) \to \operatorname{BMO}(\mathbb{D})$ is bounded.
\end{itemize}
\end{definition}

\begin{theorem}\label{thm:new2}
Let $\nu$ be a finite positive Borel measure on $[0,1]$, and let $c_\nu'$ denote the dual exponent of $c_\nu$. 
The Shimorin-type operator has a standard Bergman-type $L^p$–$L^q$ estimate if and only if one of the following holds, according to the value of $c_\nu$:
\begin{itemize}
  \item When $c_\nu=1$, $\nu$ is a finite measure.
  \item When $1<c_\nu<2$, $\nu$ satisfies the $2/c_{\nu}'$ Carleson-type condition
  \begin{align}\label{carleson-type-0}
  \sup_{0 < t \le 1}\frac{\nu([1-t,1))}{t^{2/c_\nu'}} < \infty.
  \end{align}
  \item When $c_\nu=2$, $\nu$ is integrable with respect to the hyperbolic arc length on $[0,1)$, i.e.,
  \begin{align}\label{hyper-type-0}
  \int_{[0,1)} \frac{1}{1-r^2} \, d\nu(r) < \infty.
  \end{align}
\end{itemize}
\end{theorem}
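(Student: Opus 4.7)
The plan is to reduce both directions of the theorem to pointwise control of $K_\nu(z,\lambda)$ by standard Bergman-type kernels $|1-z\bar\lambda|^{-\alpha}$. Starting from the elementary domination
\[
|K_\nu(z,\lambda)|\le\frac{1}{|1-z\bar\lambda|}\int_0^1\frac{d\nu(r)}{|1-rz\bar\lambda|}
\]
and the comparison $|1-rw|\asymp(1-r)+|1-w|$, valid uniformly for $r\in[0,1]$ and $w\in\overline{\D}$, I first carry out a one-variable asymptotic analysis of $I(s):=\int_0^1 d\nu(r)/((1-r)+s)$ as $s\downarrow 0$, via integration by parts against the tail $t\mapsto\nu([1-t,1))$. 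This produces the dictionary between a condition on $\nu$ and the decay of $K_\nu(z,\lambda)$ that drives the rest of the argument.

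For sufficiency I distinguish the three cases. When $c_\nu=1$, the universal bound $|K_\nu(z,\lambda)|\le 2\nu([0,1])\,|1-z\bar\lambda|^{-2}$ dominates $T_\nu$ by the Bergman projection, and the Calderón--Zygmund theory supplies all three standard estimates at once. When $1<c_\nu<2$, the Carleson-type hypothesis \eqref{carleson-type-0} yields $I(s)\lesssim s^{2/c_\nu'-1}$, hence $|K_\nu(z,\lambda)|\lesssim|1-z\bar\lambda|^{-2/c_\nu}$, placing $T_\nu$ under the sub-singular Bergman-type operator $K_{2/c_\nu}$; the open-segment $L^p$--$L^q$ bound then follows from \cite{CFWY}, while the weak-type $(1,c_\nu)$ and $L^{c_\nu'}$--BMO endpoints come from the classical fractional-integral theory on the disk. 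When $c_\nu=2$, hypothesis \eqref{hyper-type-0} gives $I(s)=O(1)$, so $|K_\nu(z,\lambda)|\lesssim|1-z\bar\lambda|^{-1}$ and $T_\nu$ is dominated by the Hardy kernel $K_1$, from which the same endpoint theory delivers the standard estimates with $(c_\nu,c_\nu')=(2,2)$.

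For necessity, the same one-variable analysis also yields matching lower bounds: for $w\in\D$ with $|1-w|=s$ small and $\lambda$ in a small pseudo-hyperbolic disk aligned with $w$, the argument of $1-rw\bar\lambda$ lies in a fixed sector and one obtains $|K_\nu(w,\lambda)|\asymp s^{-1}I(s)$ with controlled phase. I plan to insert this lower bound into each assumed endpoint inequality, tested against the family $f_w=\chi_{\Delta(w,1/2)}$ for the weak-type and open-segment estimates, and against normalized reproducing-kernel-type functions for the BMO endpoint, where the mean oscillation of $T_\nu f$ on a pseudo-hyperbolic disk is bounded below by the pointwise lower bound on $K_\nu$. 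Specializing to sequences $w_n\to\partial\D$ with $|1-w_n|=t_n\downarrow 0$ and comparing the resulting inequalities with $I(t_n)$ then forces the Carleson condition when $1<c_\nu<2$ and the hyperbolic integrability when $c_\nu=2$; the $c_\nu=1$ case is vacuous, since $\nu$ is finite by standing hypothesis.

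The principal obstacle, I expect, is twofold. First, the one-variable asymptotic analysis of $I(s)$ must be done sharply enough that its upper bounds are quantitatively matched by the pointwise lower bounds on $|K_\nu|$; any looseness here would spoil the test-function arguments on the necessity side. Second, on the critical line the three endpoints probe different features of $\nu$, so the test-function constructions must be tailored separately to each endpoint. I anticipate the $L^{c_\nu'}$--BMO endpoint to require the most delicate bookkeeping, since it demands bounding the mean oscillation of $T_\nu f$ on pseudo-hyperbolic disks from below and matching it against the Carleson-type quantity \eqref{carleson-type-0}.
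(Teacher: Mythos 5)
Your overall skeleton is close to the paper's: size estimates for $K_\nu$ under the Carleson/integrability hypotheses plus domination by Bergman-type operators for sufficiency, and boundary test functions with phase-controlled lower bounds for necessity. However, there is a genuine gap at the BMO endpoint on the sufficiency side. Pointwise domination $|K_\nu(z,\lambda)|\lesssim |1-z\overline{\lambda}|^{-2/c_\nu}$ transfers the interior $L^p$--$L^q$ bounds and the weak-type $(1,c_\nu)$ bound (these are monotone under positive-kernel domination), but it does \emph{not} transfer the $L^{c_\nu'}\to\mathrm{BMO}$ estimate: BMO/Bloch bounds require a smoothness (derivative) estimate on the kernel, namely $|\partial_z K_\nu(z,\lambda)|\lesssim |1-z\overline{\lambda}|^{-2/c_\nu-1}$, which is what the paper proves in Lemmas \ref{u-estimate of K} and \ref{u-estimate of K-2} by estimating the second-order quantity $\int_0^1 |1-rz\overline{\lambda}|^{-2}\,d\nu(r)$ from the Carleson (resp.\ hyperbolic integrability) condition, before invoking Proposition \ref{alpha Bergman CZO}(c). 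Your one-variable analysis of $I(s)=\int_0^1 d\nu(r)/((1-r)+s)$ only controls the size bound; and the tempting shortcut of deriving the derivative bound from holomorphy via Cauchy estimates on a disk of radius $\sim (1-|z|)$ only yields $|\partial_z K_\nu(z,\lambda)|\lesssim (1-|z|)^{-1}|1-z\overline{\lambda}|^{-2/c_\nu}$, which loses a logarithm in the Bloch estimate (the resulting integral $\int_{\mathbb{D}}|1-z\overline{\lambda}|^{-2}\,dA(\lambda)$ diverges logarithmically), so this endpoint genuinely needs the sharper kernel-gradient estimate you never establish.

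There is a matching gap on the necessity side for the same endpoint. You propose to bound the mean oscillation of $T_\nu f$ on a pseudo-hyperbolic disk from below ``by the pointwise lower bound on $K_\nu$,'' but a large pointwise value of $T_\nu f$ gives no lower bound on its oscillation (constants have zero oscillation). What is needed is a lower bound on the \emph{variation} of $T_\nu f$, which the paper obtains by passing to the Bloch seminorm (legitimate since $T_\nu f$ is holomorphic, via the Coifman--Rochberg--Weiss equivalence) and proving a lower bound on $\operatorname{Re}\,\partial_z K_\nu$ on the boundary boxes $E_t$ (Lemma \ref{lem:new19}), then testing $(1-|z|^2)|(T_\nu \mathbf{1}_{E_t})'(z)|$ at $z=\sqrt{1-t}$. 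Your weak-type and interior-line necessity arguments (characteristic functions of boundary regions plus sector control of the phase of $(1-z\overline{\lambda})(1-rz\overline{\lambda})$) do match the paper's Lemma \ref{l-estimate of real part of K} mechanism and would go through, but the BMO-endpoint necessity as you describe it would not, and it must be repaired with a derivative-kernel lower bound of the above type.
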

\begin{remark}
Using dyadic harmonic analysis techniques, the most recent work by Hu and Zhou \cite{HZ25} proves that the hyper-singular Bergman-type operator \( K_\alpha \) (\( 2 < \alpha < 3 \)) is always weak-type \((p, q)\) bounded in the interior of the dashed line in Figure \ref{alpha3}.
\end{remark}
\begin{remark}
The proof of Theorem \ref{thm:new2} follows from Theorems \ref{weak bd on the line}, \ref{weak bd}, and \ref{dual of weak 1 alpha}, which in fact establish further results. Specifically:
\begin{itemize}
\item A single $L^p$–$L^q$ estimate, or weak-type/BMO-type estimate at the endpoints, is sufficient to imply the “only if” part of Theorem \ref{thm:new2}.
\item Theorem \ref{weak bd on the line} shows that strong-type $(p,q)$ estimates and weak-type $(p,q)$ estimates are equivalent in the interior of the boundary line.
\end{itemize}
\end{remark}
\begin{remark}
  The integral expression \eqref{hyper-type-0} has appeared in Theorem 1 of \cite{PRW}.
\end{remark}

\noindent \textbf{Applications to the Shimorin problem.} 
Let us return to Shimorin's representation problem \cite[Problem~19]{AHR}. Shimorin provided a solution for the radial case, and this led to the development of Shimorin-type integrals. Pel\'{a}ez, R\"{a}tty\"{a}, and Wick showed in \cite{PRW} that Shimorin-type integrals can also represent other classes of kernels. A natural question then arises: Given a kernel $K(z,\lambda)$, when does it admit a Shimorin-type integral representation?

Observe that for any finite positive Borel measure $\nu$ on $[0,1]$, we have $c_\nu \in [1,2]$. By Theorem \ref{pq-necessary}, if for a kernel $K$ the associated integral operator
\[
T_K f(z) = \int_{\mathbb{D}} K(z,\lambda) f(\lambda) \, dA(\lambda)
\]
fails to satisfy the $(1/p,1/q)$ condition described therein for every admissible pair $(p,q)$, then $K$ does not admit a Shimorin-type integral representation. In particular, we conclude that for $\alpha \in (0,1)$, the kernel  
\[
\frac{1}{(1 - z\overline{\lambda})^{\alpha}}
\]
does not have a Shimorin-type integral representation. Otherwise, by Theorem \ref{pq-necessary}, the Bergman-type operator $K_\alpha:L^p(\mathbb{D})\to L^p(\mathbb{D})$ is unbounded for all $1<p<2$ satisfying  
\[
\frac{1}{q} < \frac{1}{p} -\frac{1}{2},
\]
which contradicts the result in \cite{CFWY} (see, e.g., Figure \ref{alpha2}).

\bigskip

We conclude this introduction with an outline of the paper.

\begin{itemize}
    \item The first and most crucial step in establishing our main results is to determine the precise boundary between bounded and unbounded Shimorin-type operators. Our newly introduced quantity \(c_\nu\) identifies this boundary accurately.
    \item Using techniques from Bergman space theory, we derive effective $L^p$-norm estimates for the Shimorin-type kernels, as stated in Proposition \ref{pnorm of K} in Section \ref{section:two-side}.
    \item Combining these kernel estimates with the general theory of integral operators yields a complete proof of the sufficiency of Theorem \ref{pq-necessary} (see Subsection \ref{S:suf1}).
    \item The necessity direction of Theorem \ref{pq-necessary} is considerably more challenging than the sufficiency part. Its proof relies on a natural coefficient multiplier representation of \(T_\nu\) (see equation \eqref{tcm}).
    \item In Section \ref{S:mct}, we establish a sharp growth estimate for the coefficient multiplier sequence via Proposition \ref{prop:mn_limsup_estimate}. This estimate enables, among other applications, the construction of suitable test functions \eqref{testingft} and \eqref{testinggt}, which are used to derive the necessary condition in Subsection \ref{S:necethm1}. We emphasize that the quantity \(c_\nu\) plays a central role throughout these arguments.
    \item Under the Carleson condition \eqref{carleson-type-0} or the integrability condition \eqref{hyper-type-0}, we obtain Calderón--Zygmund-type estimates (size and smoothness estimates) for the Shimorin-type kernel in Lemmas \ref{u-estimate of K} and \ref{u-estimate of K-2}. These estimates, together with a fractional Bergman CZO argument, yield the ``if'' part of Theorem \ref{thm:new2} (presented at the beginning of Section \ref{Thm:2-3}).
    \item Finally, by constructing specific test functions, we prove Theorems \ref{weak bd on the line}, \ref{weak bd}, and \ref{dual of weak 1 alpha} in Section \ref{Thm:2-3}. This completes the proof of Theorem \ref{thm:new2}.
\end{itemize}

\medskip
{\flushleft\bf Acknowledgements.} Y.\,L and Z.\,W are supported by NSF of China (No.12471116) and 
the Fundamental Research Funds for the Central Universities (No.2025CDJ-IAIS YB-004). K.\,Z is supported by NSF of China (No.12231005) and  National Key R\&D Program of China (2024YFA1013400).

\section{Two-sided $L^p$-norm estimates for Shimorin-type kernels}\label{section:two-side}
We begin by recalling the Shimorin-type kernel defined in \eqref{sio-kernel}:
\[
K_{\nu}(z,\lambda) = \frac{1}{1 - z\overline{\lambda}} \int_0^1 \frac{1}{1 - rz\overline{\lambda}} \,d\nu(r).
\]
For a general finite positive Borel measure $\nu$ on $[0,1]$, we stress that this kernel need not coincide with the reproducing kernels of weighted Bergman spaces. Consequently, existing $L^p$-norm estimates for weighted Bergman kernels are insufficient for our analysis. The main objective of this section is to establish sharp two-sided $L^p$-norm bounds for $K_\nu$.
\begin{proposition}\label{pnorm of K}
Let $\nu$ be a finite positive Borel measure on $[0,1]$ such that $\nu(\{1\})=0$. For all $1 < p < \infty$ and each $z\in\mathbb{D}$, we have
\begin{align*}
(1-|z|^2)^{2/p-1} \int_{0}^1 \frac{d\nu(r)}{1 - r|z|^2} 
\le \|K_{\nu}(z,\cdot)\|_{L^p(\mathbb{D})} 
\le \int_0^1 \frac{1}{1 - r} \int_{r}^1 (1 - t|z|^2)^{2/p - 2} dt \, d\nu(r).
\end{align*}
\end{proposition}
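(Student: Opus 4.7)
\medskip
\noindent\textbf{Proof proposal.}

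The plan is to handle the two sides independently: the upper bound by linearising the product structure of the kernel and applying Minkowski's integral inequality, and the lower bound by exploiting the anti-holomorphy of $K_{\nu}(z,\cdot)$ in $\lambda$ and invoking the pointwise estimate in the Bergman space $A^{p}$.

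For the upper bound, the first step is the elementary integral identity
\[
\frac{1}{(1-w)(1-rw)}=\frac{1}{1-r}\int_{r}^{1}\frac{dt}{(1-tw)^{2}}\qquad (w\in\D,\ r\in[0,1)),
\]
which is immediate from $\frac{d}{dt}(1-tw)^{-1}=w(1-tw)^{-2}$. Substituting $w=z\overline{\lambda}$ into \eqref{sio-kernel} and using $\nu(\{1\})=0$ to restrict to $r\in[0,1)$ gives the linearised representation
\[
K_{\nu}(z,\lambda)=\int_{0}^{1}\frac{1}{1-r}\int_{r}^{1}\frac{dt}{(1-tz\overline{\lambda})^{2}}\,d\nu(r).
\]
Minkowski's integral inequality in $L^{p}(\D,dA(\lambda))$ pulls the norm inside the double outer integral, and the classical Forelli--Rudin estimate for the Bergman kernel yields $\bigl\|(1-tz\overline{\lambda})^{-2}\bigr\|_{L^{p}(dA)}\lesssim (1-t^{2}|z|^{2})^{2/p-2}$ for each $t\in[0,1]$. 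The inequality $1-t^{2}|z|^{2}\geq 1-t|z|^{2}$ (valid because $t\leq 1$), combined with the negativity of the exponent $2/p-2$, then produces the stated upper bound.

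For the lower bound, I use that $\lambda\mapsto\overline{K_{\nu}(z,\lambda)}$ is holomorphic on $\D$, so the standard pointwise estimate in $A^{p}$, namely $|f(w)|\lesssim (1-|w|^{2})^{-2/p}\|f\|_{L^{p}(\D)}$ for holomorphic $f$, applies with $f=\overline{K_{\nu}(z,\cdot)}$. Evaluating at $w=z$ gives
\[
K_{\nu}(z,z)\lesssim\frac{\|K_{\nu}(z,\cdot)\|_{L^{p}(\D)}}{(1-|z|^{2})^{2/p}},
\]
and the direct evaluation $K_{\nu}(z,z)=(1-|z|^{2})^{-1}\int_{0}^{1}(1-r|z|^{2})^{-1}\,d\nu(r)$ yields the lower bound after rearrangement.

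The main technical subtlety is the passage from $1-t^{2}|z|^{2}$ to $1-t|z|^{2}$ in the upper bound: the Forelli--Rudin estimate naturally produces the former, and one needs to exploit the negativity of the exponent $2/p-2$ to conclude. I would also verify that the upper-bound integrand is finite for finite $\nu$, which reduces to the observation that $\int_{r}^{1}(1-t|z|^{2})^{2/p-2}\,dt$ vanishes like $1-r$ as $r\to 1^{-}$ for fixed $z\in\D$, thereby compensating the $1/(1-r)$ factor; this is consistent with the hypothesis $\nu(\{1\})=0$.
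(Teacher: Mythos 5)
Your lower bound is exactly the paper's argument: apply the pointwise estimate $|f(w)|\le (1-|w|^2)^{-2/p}\|f\|_{L^p(\mathbb{D})}$ (which holds with constant $1$ for the normalized area measure, so you should quote it without the $\lesssim$) to the holomorphic function $\overline{K_\nu(z,\cdot)}$ at $w=z$ and rearrange; that half is fine, and your double-integral identity for the kernel is correct and is precisely the paper's Proposition \ref{kernel-doub}.

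The gap is in the upper bound. The proposition asserts the inequality with constant exactly $1$, but your route --- Minkowski's integral inequality followed by the Forelli--Rudin estimate $\|(1-tz\overline{\lambda})^{-2}\|_{L^p(dA(\lambda))}\lesssim (1-t^2|z|^2)^{2/p-2}$ --- only yields the bound up to a $p$-dependent multiplicative constant, and that constant cannot be removed by your final "negativity of the exponent" step. After Minkowski you would need the constant-free estimate $\|(1-tz\overline{\lambda})^{-2}\|_{L^p(dA(\lambda))}\le (1-t|z|^2)^{2/p-2}$, and this is false for $p$ close to $1$: as $t,|z|\to 1^-$ one has $\int_{\mathbb{D}}|1-tz\overline{\lambda}|^{-2p}\,dA(\lambda)\sim \frac{\Gamma(2p-2)}{\Gamma(p)^2}\,(1-t^2|z|^2)^{2-2p}$, whose constant blows up as $p\to 1^+$, while replacing $1-t^2|z|^2$ by $1-t|z|^2$ only gains a factor at most $2^{2p-2}$ (since $1-t^2|z|^2\le 2(1-t|z|^2)$). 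So "this produces the stated upper bound" fails as written; what you actually get is $\|K_\nu(z,\cdot)\|_{L^p(\mathbb{D})}\le C_p\int_0^1\frac{1}{1-r}\int_r^1(1-t|z|^2)^{2/p-2}\,dt\,d\nu(r)$, which is enough for the later applications (Corollary \ref{Kp-norm} and the boundedness results, where constants are harmless) but not the proposition as stated. The paper avoids any Forelli--Rudin constant by a different route: it expresses $\|K_\nu(z,\cdot)\|_{L^p(\mathbb{D})}$ by duality as a supremum of $|T_\nu f(z)|$ over unit-norm $f\in L_a^{p'}(\mathbb{D})$, uses Corollary \ref{T_{nu}formula} to write $T_\nu f(z)=\int_0^1\frac{1}{1-r}\int_r^1 f(tz)\,dt\,d\nu(r)$, and then applies the constant-$1$ pointwise estimate $|f(tz)|\le(1-t|z|^2)^{-2/p'}$; if you want the exact inequality, that is the argument to follow.
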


We next give a simple double integral representation of the Shimorin-type kernel.
\begin{proposition}\label{kernel-doub}
  Let $\nu$ be a finite positive Borel measure on $[0,1]$ such that $\nu(\{1\})=0$. Then
  \[
    K_{\nu}(z,\lambda) = \int_0^1 \frac{1}{1 - r} \int_r^1 \frac{dt}{(1 - tz\overline{\lambda})^2} d\nu(r).
  \]
\end{proposition}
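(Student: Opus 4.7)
The proof is a direct computation, essentially a partial-fraction identity applied pointwise in $w=z\overline{\lambda}$. The plan is to show that for $|w|<1$ and $r\in[0,1)$,
\[
\frac{1}{1-r}\int_r^1\frac{dt}{(1-tw)^2}\;=\;\frac{1}{(1-w)(1-rw)},
\]
after which the proposition follows by multiplying by $d\nu(r)$ and integrating over $[0,1]$, then pulling the factor $1/(1-w)=1/(1-z\overline{\lambda})$ outside the integral and comparing with the definition \eqref{sio-kernel}.

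The first step is to compute the inner integral. Assuming $w\neq 0$, the antiderivative of $(1-tw)^{-2}$ with respect to $t$ is $[w(1-tw)]^{-1}$, so
\[
\int_r^1\frac{dt}{(1-tw)^2}\;=\;\frac{1}{w(1-w)}-\frac{1}{w(1-rw)}\;=\;\frac{(1-rw)-(1-w)}{w(1-w)(1-rw)}\;=\;\frac{1-r}{(1-w)(1-rw)}.
\]
The case $w=0$ gives both sides equal to $1-r$, so the identity holds for all $|w|<1$. Dividing by $1-r$ yields the displayed equality, which is exactly the integrand of Shimorin's kernel at the parameter $r$, up to the overall factor $1/(1-w)$.

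The second step is to integrate against $d\nu(r)$. Since $z,\lambda\in\mathbb{D}$ implies $|w|<1$, we have $|1-tw|\geq 1-|w|>0$ uniformly in $t\in[0,1]$, so the double integrand $\frac{1}{1-r}(1-tw)^{-2}\mathbf{1}_{\{t>r\}}$ is bounded on $[0,1]\times[0,1]$ (the assumption $\nu(\{1\})=0$ ensures the $(1-r)^{-1}$ singularity is harmless since it is exactly cancelled by the computed inner integral). Therefore Fubini is unnecessary: we simply substitute the identity to get
\[
\int_0^1\frac{1}{1-r}\int_r^1\frac{dt}{(1-tw)^2}\,d\nu(r)
\;=\;\frac{1}{1-w}\int_0^1\frac{d\nu(r)}{1-rw}\;=\;K_\nu(z,\lambda).
\]

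There is essentially no obstacle: the argument is a one-line antiderivative calculation followed by a telescoping subtraction. The only thing to be careful about is the boundary behavior as $r\to 1$, which is why the hypothesis $\nu(\{1\})=0$ is stated, though the reformulated integrand has a removable singularity there anyway.
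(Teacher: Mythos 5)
Your proposal is correct and follows essentially the same route as the paper: the same antiderivative computation giving $\frac{1}{1-r}\int_r^1\frac{dt}{(1-tw)^2}=\frac{1}{(1-w)(1-rw)}$ (with the $w=0$ case handled separately), followed by integration against $d\nu(r)$. No gaps.
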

\begin{proof}
Let $\nu$ be a finite positive Borel measure on $[0,1]$ and $\nu(\{1\})=0$. Fix $z,\lambda\in \mathbb{D}$ with $z\overline{\lambda} \ne 0$ and $r\in[0,1)$. A direct computation gives the identity below. The case $z\overline{\lambda} = 0$ follows by continuity: 
\begin{align*}
\frac{1}{1-r}\int_{r}^{1} \frac{dt}{(1 - tz\overline{\lambda})^2} &= \frac{1}{1-r} \frac{1}{z\overline{\lambda}} \left (\frac{1}{1 - z\overline{\lambda}} - \frac{1}{1 - rz\overline{\lambda}}\right ) = \frac{1}{(1 - z\overline{\lambda})(1 - rz\overline{\lambda})}.
\end{align*}
It follows that
\[
K_{\nu}(z,\lambda) = \int_{0}^1 \frac{1}{(1 - z\overline{\lambda})(1 - rz\overline{\lambda})} d\nu(r) = \int_0^1 \frac{1}{1 - r} \int_r^1 \frac{dt}{(1 - tz\overline{\lambda})^2} d\nu(r).
\]
This completes the proof.
\end{proof}
By Proposition \ref{kernel-doub}, the operator \(T_{\nu}\) has a simple form acting on analytic functions.

\begin{corollary}\label{T_{nu}formula}
Let $\nu$ be a finite positive Borel measure on $[0,1]$ such that $\nu(\{1\})=0$. For \(f \in L_a^1(\mathbb{D})\), the following identity holds:
\[
T_{\nu}f(z) = \int_{0}^{1} \frac{1}{1 - r} \int_r^1 f(tz)\,dt \,d\nu(r), \quad z \in \mathbb{D}.
\]
\end{corollary}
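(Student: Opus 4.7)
The plan is to substitute the double-integral representation of $K_\nu$ from Proposition \ref{kernel-doub} into the definition of $T_\nu f(z)$, swap the order of integration via Fubini's theorem, and then recognize the innermost integral over $\mathbb{D}$ as an instance of the Bergman reproducing formula evaluated at the point $tz$.

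More concretely, I would start from
\[
T_\nu f(z) = \int_{\mathbb{D}} \int_0^1 \frac{1}{1-r} \int_r^1 \frac{dt}{(1-tz\overline{\lambda})^2}\, d\nu(r)\, f(\lambda)\, dA(\lambda),
\]
and seek to interchange the three integrals to obtain
\[
T_\nu f(z) = \int_0^1 \frac{1}{1-r}\int_r^1 \left(\int_{\mathbb{D}} \frac{f(\lambda)}{(1-tz\overline{\lambda})^2}\, dA(\lambda)\right)\! dt\, d\nu(r).
\]
Since $f \in L^1_a(\mathbb{D})$, the standard Bergman reproducing identity tells us that the inner integral equals $f(tz)$ for every $t\in[0,1]$ (note that $|tz|\le|z|<1$), which would immediately give the claimed formula.

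The only real work is verifying the hypothesis of Fubini's theorem. For fixed $z\in\mathbb{D}$, $t\in[0,1]$, and $\lambda\in\mathbb{D}$, the trivial bound $|1-tz\overline{\lambda}|\ge 1-|z|$ yields $|1-tz\overline{\lambda}|^{-2}\le(1-|z|)^{-2}$, which is a constant depending only on $z$. Hence
\[
\int_0^1 \frac{1}{1-r}\int_r^1\!\int_{\mathbb{D}} \frac{|f(\lambda)|}{|1-tz\overline{\lambda}|^2}\, dA(\lambda)\, dt\, d\nu(r) \le \frac{\|f\|_{L^1(\mathbb{D})}\,\nu([0,1])}{(1-|z|)^2} < \infty,
\]
since $\frac{1}{1-r}\int_r^1 dt = 1$ and $\nu$ is finite. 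This finite bound legitimizes the interchange, completing the proof.

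The whole argument is quite short; no single step is a true obstacle, but the most delicate point is the Fubini justification, which nevertheless reduces to the uniform pointwise bound $|1-tz\overline{\lambda}|\ge 1-|z|$ available because $t\le 1$ and $|\lambda|<1$. The hypothesis $\nu(\{1\})=0$ is inherited from Proposition \ref{kernel-doub} and is needed to ensure that the factor $\frac{1}{1-r}$ is integrated against a measure with no mass at the singularity $r=1$.
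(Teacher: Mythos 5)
Your proposal is correct and follows essentially the same route as the paper: substitute the double-integral representation of $K_\nu$ from Proposition \ref{kernel-doub}, interchange the integrals by Fubini's theorem, and apply the Bergman reproducing identity at the point $tz$. The only difference is that you spell out the Fubini hypothesis via the bound $|1-tz\overline{\lambda}|\ge 1-|z|$, which the paper leaves implicit; this is a harmless (indeed welcome) addition rather than a different argument.
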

\begin{proof}
Let \(f \in L_a^1(\mathbb{D})\). By Proposition \ref{kernel-doub},
\[
K_{\nu}(z,\lambda) = \int_0^1 \frac{1}{1 - r} \int_r^1 \frac{dt}{(1 - tz\overline{\lambda})^2} d\nu(r).
\]
By Fubini's theorem, we have
\begin{align*}
T_{\nu}f(z) &= \int_{\mathbb{D}} K_{\nu}(z,\lambda) f(\lambda) \,dA(\lambda) \\
&= \int_0^1 \frac{1}{1 - r} \int_r^1 \int_{\mathbb{D}} \frac{f(\lambda)}{(1 - tz\overline{\lambda})^2} \,dA(\lambda) dt \,d\nu(r) \\
&= \int_{0}^{1} \frac{1}{1 - r} \int_r^1 f(tz) \,dt \,d\nu(r),
\end{align*}
where the last equality is due to the Bergman reproducing property of analytic functions on \(\mathbb{D}\). This completes the proof.
\end{proof}

\begin{proof}[Proof of Proposition \ref{pnorm of K}]
For $1 \le p < \infty$, recall that for any $f\in L_a^p(\mathbb{D})$ and $z\in\mathbb{D}$, the estimate (see \cite{Zhu14}, Theorem 4.14)
\begin{align}\label{sub}
|f(z)| \leq \frac{\|f\|_{L^p(\mathbb{D})}}{\left(1 - |z|^2\right)^{2/p}} 
\end{align}
holds. Applying \eqref{sub} to the function $\overline{K_{\nu}(z,\cdot)} \in L_a^p(\mathbb{D})$, we deduce 
\[
\left|K_{\nu}(z,z)\right| \le \frac{\left\|K_{\nu}(z,\cdot)\right\|_{L^p(\mathbb{D})}}{(1 - \left|z\right|^2)^{2/p}} .
\]
Rearranging this inequality yields the lower bound:
\begin{align*}
\left\|K_{\nu}(z,\cdot)\right\|_{L^p(\mathbb{D})} &\ge (1 - \left|z\right|^2)^{2/p} \left|K_{\nu}(z,z)\right|\\
& = (1 - \left|z\right|^2)^{2/p - 1} \int_0^1 \frac{d\nu(r)}{1 - r\left|z\right|^2}.
\end{align*}

To complement this, we establish an upper bound for $\left\|K_{\nu}(z,\cdot)\right\|_{L^p(\mathbb{D})}$. First, note that the $L^p$-norm is invariant under complex conjugation, so
\[
\left\|K_{\nu}(z,\cdot)\right\|_{L^p(\mathbb{D})} = \left\|\overline{K_{\nu}(z,\cdot)}\right\|_{L^p(\mathbb{D})}.
\]
By the duality of $L^p$ and $L^{p'}$ spaces (where $1/p + 1/p' = 1$), we express the norm as a supremum over the unit sphere of $L_a^{p'}(\mathbb{D})$:
\begin{align*}
\left\|\overline{K_{\nu}(z,\cdot)}\right\|_{L^p(\mathbb{D})}
&= \sup_{\substack{f \in L_a^{p'}(\mathbb{D}), \\ \left\|f\right\|_{L^{p'}(\mathbb{D})} = 1}} \left| \left\langle \overline{K_{\nu}(z,\cdot)}, f \right\rangle_{L^2(\mathbb{D})} \right| \\
&= \sup_{\substack{f \in L_a^{p'}(\mathbb{D}), \\ \left\|f\right\|_{L^{p'}(\mathbb{D})} = 1}} \left| \int_{\mathbb{D}} \overline{K_{\nu}(z,\lambda)} \overline{f(\lambda)} \, dA(\lambda) \right|\\
&= \sup_{\substack{f \in L_a^{p'}(\mathbb{D}), \\ \left\|f\right\|_{L^{p'}(\mathbb{D})} = 1}} |T_\nu f(z)|,
\end{align*}
where the last equality follows from the definition of the operator $T_\nu$. 

By Corollary \ref{T_{nu}formula}, the action of $T_\nu$ on $f$ at $z$ has an integral representation, so we substitute to obtain:
\[
\left\|\overline{K_{\nu}(z,\cdot)}\right\|_{L^p(\mathbb{D})}=
\sup_{\substack{f \in L_a^{p'}(\mathbb{D}), \\ \left\|f\right\|_{L^{p'}(\mathbb{D})} = 1}}
\left|\int_{0}^{1} \frac{1}{1 - r} \int_r^1 f(tz)\,dt \,d\nu(r)\right|.
\]
Now, take any $f \in L_a^{p'}(\mathbb{D})$ with $\left\|f\right\|_{L^{p'}(\mathbb{D})} = 1$. Applying estimate \eqref{sub} (with $p$ replaced by $p'$) to $f(tz)$ for $t\in(0,1)$ and $z\in\mathbb{D}$, we have
\[
|f(tz)|\leq \frac{1}{\left(1 - |tz|^2\right)^{2/p'}} \le \frac{1}{\left(1 - t|z|^2\right)^{2/p'}}.
\]
Using the identity $-2/p' = 2/p - 2$, we bound the supremum by:
\begin{align*}
\left\|K_{\nu}(z,\cdot)\right\|_{L^p(\mathbb{D})} \le \int_0^1 \frac{1}{1 - r} \int_r^1 (1 - t\left|z\right|^2)^{2/p - 2} dt \, d\nu(r).
\end{align*}
This completes the proof.
\end{proof}

\begin{corollary}\label{Kp-norm}
    Let $\nu$ be a finite positive Borel measure on $[0,1]$ such that $\nu(\{1\})=0$.
  Suppose that $1<p<2$ and that $\int_{0}^1 (1-r)^{2/p-2}\,d\nu(r)<\infty$.
  Then
  \[
    \frac{1}{2}\int_0^1 (1 - r)^{2/p - 2} d\nu(r) \le \sup_{z\in \mathbb{D}} \|K_{\nu}(z,\cdot)\|_{L^p(\mathbb{D})} \le \frac{2}{2 - p} \int_0^1 (1 - r)^{2/p - 2} d\nu(r).
  \]
\end{corollary}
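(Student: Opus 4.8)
The plan is to obtain both halves of the inequality directly from Proposition~\ref{pnorm of K}; note that its hypothesis $\nu(\{1\})=0$ is exactly what is assumed here, while $\int_0^1(1-r)^{2/p-2}\,d\nu(r)<\infty$ only guarantees the right-hand side is finite.

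For the upper bound I would start from the right-hand estimate of Proposition~\ref{pnorm of K}. Since $1<p<2$ the exponent $2/p-2$ is negative and $1-t|z|^2\ge 1-t$ for all $t\in[0,1]$ and $z\in\mathbb{D}$, so $(1-t|z|^2)^{2/p-2}\le(1-t)^{2/p-2}$ and
\[
\int_r^1(1-t|z|^2)^{2/p-2}\,dt\le\int_r^1(1-t)^{2/p-2}\,dt=\frac{p}{2-p}\,(1-r)^{2/p-1}.
\]
Substituting this into the bound of Proposition~\ref{pnorm of K} and using $\frac{1}{1-r}(1-r)^{2/p-1}=(1-r)^{2/p-2}$ gives, uniformly in $z$,
\[
\|K_\nu(z,\cdot)\|_{L^p(\mathbb{D})}\le\frac{p}{2-p}\int_0^1(1-r)^{2/p-2}\,d\nu(r)\le\frac{2}{2-p}\int_0^1(1-r)^{2/p-2}\,d\nu(r),
\]
the last step because $p\le2$; taking the supremum over $z\in\mathbb{D}$ closes this half (in fact with the sharper constant $p/(2-p)$).

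For the lower bound I would use the left-hand estimate of Proposition~\ref{pnorm of K}: for every $z$, writing $x:=|z|^2\in[0,1)$,
\[
\|K_\nu(z,\cdot)\|_{L^p(\mathbb{D})}\ \ge\ (1-x)^{2/p-1}\int_0^1\frac{d\nu(r)}{1-rx}\ =\ \int_0^1\frac{(1-x)^{2/p-1}}{1-rx}\,d\nu(r),
\]
so that $\sup_{z\in\mathbb{D}}\|K_\nu(z,\cdot)\|_{L^p(\mathbb{D})}\ge\sup_{0\le x<1}\int_0^1\frac{(1-x)^{2/p-1}}{1-rx}\,d\nu(r)$. Put $c:=2/p-1\in(0,1)$. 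For each fixed $r\in[0,1)$, a short computation shows that $x\mapsto(1-x)^{c}/(1-rx)$ on $[0,1)$ is maximized at $x=0$ when $r\le c$ and at the interior critical point $x_*(r)=(r-c)/\big(r(1-c)\big)$ when $r>c$, with
\[
\sup_{0\le x<1}\frac{(1-x)^{c}}{1-rx}\ \ge\ \frac{c^{c}(1-c)^{1-c}}{r^{c}}\,(1-r)^{c-1}\ \ge\ \tfrac12\,(1-r)^{2/p-2},
\]
using $r^{c}\le1$ and the inequality $c^{c}(1-c)^{1-c}\ge\tfrac12$ for $c\in(0,1)$ (equivalently $H(c)\le\log2$ for the binary entropy $H$). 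This already explains the constant $\tfrac12$ appearing in the statement.

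The hard part --- and the step I expect to be the real obstacle --- is to convert this pointwise-in-$r$ maximum into a \emph{single} admissible $x$ for which $\int_0^1(1-x)^{c}(1-rx)^{-1}\,d\nu(r)\ge\tfrac12\int_0^1(1-r)^{2/p-2}\,d\nu(r)$: the supremum over $x$ cannot be moved inside the integral over $r$, and the optimizer $x_*(r)$ genuinely depends on $r$. I would try to handle this first for $\nu$ a finite sum of point masses (where a scaling argument should pin down a good $x$) and then pass to general $\nu$ by approximation; failing that, an alternative for this direction is to bypass Proposition~\ref{pnorm of K} and estimate $\|K_\nu(z,\cdot)\|_{L^p(\mathbb{D})}$ from below directly through the double-integral representation in Proposition~\ref{kernel-doub}, using that for a positive measure $\nu$ the kernel has no cancellation on a Stolz-type region at a boundary point. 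The upper bound, by contrast, is entirely routine.
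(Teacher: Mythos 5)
Your upper bound is correct and is exactly the paper's argument: both use $(1-t|z|^2)^{2/p-2}\le(1-t)^{2/p-2}$, integrate in $t$ to get $\frac{p}{2-p}(1-r)^{2/p-1}$, and then relax $p/(2-p)$ to $2/(2-p)$; nothing to add there.

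For the lower bound you have not produced a proof, but the obstacle you isolate is precisely where the paper's own proof breaks down: the paper fixes, for each $r$, a point $z_0$ with $|z_0|^2=r$, deduces $\sup_{z\in\mathbb{D}}\frac{(1-|z|^2)^{2/p-1}}{1-r|z|^2}\ge\frac12(1-r)^{2/p-2}$, and then integrates this in $r$ against $\nu$ (invoking dominated convergence), i.e.\ it bounds $\sup_z\int(\cdots)\,d\nu(r)$ from below by $\int\sup_z(\cdots)\,d\nu(r)$ --- exactly the interchange you rightly refuse to make, and the appeal to dominated convergence does not repair it. Worse, neither of your fallback plans can close the gap, because the asserted inequality itself fails for measures spread over many scales, already for finite sums of point masses. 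Take $p=4/3$ and $\nu=\sum_{j=0}^{N}2^{-j}\delta_{r_j}$ with $r_j=1-4^{-j}$; then $\int_0^1(1-r)^{2/p-2}\,d\nu(r)=N+1$. On the other hand, writing $\sigma=|1-z\overline{\lambda}|$ and using $|1-r_jz\overline{\lambda}|\ge\frac14\bigl(4^{-j}+\sigma\bigr)$ (Lemma \ref{zw estimate} together with $|1-r_jz\overline{\lambda}|\ge1-r_j$), one gets $|K_\nu(z,\lambda)|\lesssim\sigma^{-1}\min\{\sigma^{-1/2},2^N\}$, and summing over the dyadic shells $\{\sigma\approx2^{-m}\}$, whose normalized area is $\lesssim4^{-m}$, yields $\sup_{z}\|K_\nu(z,\cdot)\|_{L^{4/3}(\mathbb{D})}^{4/3}\le CN$ with $C$ absolute. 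Hence $\sup_z\|K_\nu(z,\cdot)\|_{L^{4/3}(\mathbb{D})}=O(N^{3/4})$, while the claimed lower bound is $(N+1)/2$, so the stated inequality fails for large $N$. The intuition is the one behind your worry: the $N+1$ summands $2^{-j}(1-z\overline{\lambda})^{-1}(1-r_jz\overline{\lambda})^{-1}$ each have $L^{4/3}$-norm about $1$ but concentrate on essentially disjoint scales $\sigma\approx4^{-j}$, so the norm of the sum is of order $N^{3/4}$ rather than $N$; no single choice of $z$, no approximation by point masses, and no Stolz-region argument via Proposition \ref{kernel-doub} can recapture the whole integral. The per-$r$ constant $\frac12$ that you and the paper both obtain is genuinely a one-scale statement (it is correct, for instance, when $\nu$ is a single point mass). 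Note that only the upper half of Corollary \ref{Kp-norm}, together with the lower bound of Proposition \ref{pnorm of K} evaluated at individually chosen points $z$, is used later in the paper, so this defect does not propagate; but the lower half as stated cannot be established, neither along your route nor along the paper's.
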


\begin{proof}
By Proposition \ref{pnorm of K}, for all $z \in \mathbb{D}$, we have
\[
\|K_{\nu}(z,\cdot)\|_{L^p(\mathbb{D})} \le  \int_0^1 \frac{1}{1 - r} \int_{r}^1 (1 - t|z|^2)^{2/p - 2} dt \, d\nu(r).
\]
Recall that for all $t \in [r,1]$, the function $x \mapsto (1 - tx^2)^{2/p - 2}$ is non-decreasing on $[0,1)$ (since $2/p - 2 < 0$ for $p > 1$). This monotonicity implies
\begin{align*}
\sup_{z\in \mathbb{D}} \|K_{\nu}(z,\cdot)\|_{L^p(\mathbb{D})} 
&\le  \int_0^1 \frac{1}{1 - r} \int_{r}^1 (1 - t|z|^2)^{2/p - 2} dt \, d\nu(r)\\
&\le  \int_0^1 \frac{1}{1 - r} \int_{r}^1 (1 - t)^{2/p - 2} dt \, d\nu(r)\\
&\le \frac{2}{2 - p} \int_0^1 (1 - r)^{2/p - 2} d\nu(r),
\end{align*}
where the final inequality follows from  
$$
\int_r^1 (1 - t)^{2/p - 2} dt = \frac{(1 - r)^{2/p - 1}}{2/p - 1} = \frac{p(1 - r)^{2/p - 1}}{2 - p} \le \frac{2(1 - r)^{2/p - 1}}{2 - p}.
$$ 
Conversely, give $r\in[0,1)$ and pick $z_0\in\mathbb{D}$  with $|z_0|^2=r$, we have
\begin{align*}
\sup_{z\in\mathbb{D}} \frac{(1 - |z|^2)^{2/p - 1}}{1 - r|z|^2} & \ge 
\frac{(1 - |z_0|^2)^{2/p - 1}}{1 - r|z_0|^2}=\frac{(1 - r)^{2/p - 1}}{1 - r^2}\\
&=\frac{(1 - r)^{2/p - 1}}{(1 - r)(1 + r)} \geq  \frac{1}{2} (1 - r)^{2/p - 2}.
\end{align*}
for all $r \in [0,1)$. 
By Proposition \ref{pnorm of K} and the Lebesgue Dominated Convergence Theorem, we conclude
\[
\sup_{z\in \mathbb{D}} \|K_{\nu}(z,\cdot)\|_{L^p(\mathbb{D})} \ge \frac{1}{2} \int_0^1 (1 - r)^{2/p - 2} d\nu(r).
\]
This completes the proof.
\end{proof}

Finally, we deal with the pointwise estimates of Shimorin-type kernel when $\nu(\{1\}) \ne 0$. We begin with the following elementary result, whose proof follows from a direct computation.
\begin{lemma}\label{zw estimate}
For all $z, \lambda \in \mathbb{D}$ and all $r \in [0,1]$, we have
\[
\frac{|1 - z\overline{\lambda}|}{|1 - rz\overline{\lambda}|} \le 2.
\]
\end{lemma}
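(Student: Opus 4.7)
The plan is to substitute $w = z\overline{\lambda}$, so that $w \in \mathbb{D}$ with $|w|<1$, and prove the equivalent statement $|1-rw| \ge \tfrac{1}{2}|1-w|$ for every $r \in [0,1]$. The entire argument rests on the elementary algebraic identity
\[
1 - rw \;=\; (1 - w) \;+\; (1-r)\,w,
\]
which displays $1-rw$ as a competition between $1-w$ (the small factor when $w$ is near $1$) and the weight $(1-r)w$ (small when $r$ is near $1$). I expect one of these two terms to dominate in each regime and will split the analysis accordingly, with the natural threshold being $|1-w| \gtrless 2(1-r)|w|$.

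In the ``first regime'' where $|1-w| \ge 2(1-r)|w|$, the reverse triangle inequality applied to the identity above gives directly
\[
|1 - rw| \;\ge\; |1-w| - (1-r)|w| \;\ge\; \frac{|1-w|}{2},
\]
which is what we want. In the complementary regime $|1-w| < 2(1-r)|w|$, I will use $|w|<1$ to pass to the cruder inequality $|1-w| < 2(1-r)$, equivalently $1-r > |1-w|/2$. Combined with the obvious bound $|1-rw| \ge 1 - r|w| \ge 1 - r$ (again using $|w|<1$), this also yields $|1-rw| > |1-w|/2$. The trivial case $w=0$ contributes $|1-rw|=|1-w|=1$ and is immediate.

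Putting the two regimes together gives $|1-rw| \ge \tfrac{1}{2}|1-w|$ in full generality, which rearranges to the claimed estimate. I do not expect any serious obstacle: the only nontrivial choice is spotting the splitting $1-rw = (1-w)+(1-r)w$ and the corresponding threshold, after which all remaining work is two applications of the triangle inequality. This also shows that the constant $2$ is essentially sharp up to the case split; the extremal behaviour is approached as $w\to 1^-$ and $r$ stays bounded away from $1$, which will be worth noting for later use of this lemma in the kernel estimates.
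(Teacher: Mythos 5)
Your argument is correct. The paper itself gives no proof of this lemma (it is dismissed as ``a direct computation''), and your substitution $w=z\overline{\lambda}$ together with the identity $1-rw=(1-w)+(1-r)w$ does establish $|1-rw|\ge \tfrac12|1-w|$ in both of your regimes, which is exactly the claim. The only comment on efficiency is that the case split is not needed: from $(1-r)|w|\le 1-r\le 1-r|w|\le |1-rw|$ one gets in a single chain
\[
|1-w|\;\le\;|1-rw|+(1-r)|w|\;\le\;2\,|1-rw|,
\]
which is presumably the ``direct computation'' the authors had in mind; your two-regime argument proves the same thing with a little more bookkeeping. One genuine (though harmless) error is your closing remark on sharpness: as $w\to 1^{-}$ with $r$ bounded away from $1$ the ratio $|1-w|/|1-rw|$ tends to $0$, not to $2$. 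The constant $2$ is in fact approached at the opposite end of the disk, e.g.\ taking $r$ near $0$ and $w=z\overline{\lambda}$ near $-1$, where $|1-w|\to 2$ while $|1-rw|\to 1$. This does not affect the validity of the lemma or of your proof, but you should correct it if you intend to cite the extremal configuration later.
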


\begin{proposition}\label{K estimates with nu1 neq 0}
  Let $\nu$ be a finite positive Borel measure on $[0,1]$, and define $\nu_1$ on $[0,1]$ given by the following: for any measurable set $I\subset [0,1]$, $\nu_1(I)=\nu([0,1)\cap I)$.
Thus,
\[
\nu = \nu_1 + \nu(\{1\})\delta_1.
\]
Then,
\[
\frac{1}{2} \left(\frac{\nu(\{1\})}{|1 - z\bar{\lambda}|^2} + |K_{\nu_{1}}(z,\lambda)|\right) \le |K_{\nu}(z,\lambda)| \le \frac{\nu(\{1\})}{|1 - z\bar{\lambda}|^2} + |K_{\nu_{1}}(z,\lambda)|.
\]
In particular, if $\nu(\{1\}) \ne 0$, then by Lemma \ref{zw estimate},
\[
\frac{\nu(\{1\})}{2|1 - z\bar{\lambda}|^2} \le |K_{\nu}(z,\lambda)| \le \frac{2 \nu([0,1])}{|1 - z\bar{\lambda}|^2}.
\]
\end{proposition}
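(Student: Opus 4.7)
The approach is a decomposition-plus-argument-control strategy. The first step is to split the measure as $\nu=\nu_1+\nu(\{1\})\delta_1$ and substitute into \eqref{sio-kernel}: the atom at $r=1$ contributes an explicit term $\nu(\{1\})/(1-z\bar\lambda)^2$, yielding the additive identity
\[
K_\nu(z,\lambda)=K_{\nu_1}(z,\lambda)+\frac{\nu(\{1\})}{(1-z\bar\lambda)^2}.
\]
The triangle inequality immediately gives the upper bound $|K_\nu(z,\lambda)|\le|K_{\nu_1}(z,\lambda)|+\nu(\{1\})/|1-z\bar\lambda|^2$.

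The main step is the matching lower bound with constant $\tfrac12$, which is not a formal consequence of the triangle inequality, since in general two complex numbers of comparable modulus can cancel. The key observation is that the two summands here actually lie in a common closed quadrant. Writing $w=z\bar\lambda\in\D$, for every $r\in[0,1]$ one has $\mathrm{Re}(1-rw)=1-r\,\mathrm{Re}(w)\ge 1-|w|>0$ and $\mathrm{Im}(1-rw)=-r\,\mathrm{Im}(w)$, whose sign is independent of $r$. Hence all the reciprocals $1/(1-rw)$ lie in one and the same closed quadrant (upper- or lower-right, according to the sign of $\mathrm{Im}(w)$). It follows that
\[
A:=\int_{[0,1)}\frac{d\nu_1(r)}{1-rw}\qquad\text{and}\qquad B:=\frac{\nu(\{1\})}{1-w}
\]
both belong to that closed quadrant, so the angle between $A$ and $B$ is at most $\pi/2$, equivalently $\mathrm{Re}(A\overline B)\ge 0$. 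This yields $|A+B|^2\ge|A|^2+|B|^2\ge\tfrac12(|A|+|B|)^2$, so $|A+B|\ge\tfrac{1}{\sqrt2}(|A|+|B|)$. Dividing by $|1-w|$ produces the lower estimate with constant $1/\sqrt2\ge 1/2$.

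For the ``in particular'' statement, assume $\nu(\{1\})\neq 0$. The lower bound just established then gives $|K_\nu(z,\lambda)|\ge \nu(\{1\})/(2|1-z\bar\lambda|^2)$ at once. For the upper bound, pull absolute values inside the integral defining $K_{\nu_1}$ and apply Lemma \ref{zw estimate} in the form $1/|1-rz\bar\lambda|\le 2/|1-z\bar\lambda|$ to obtain $|K_{\nu_1}(z,\lambda)|\le 2\nu_1([0,1))/|1-z\bar\lambda|^2$; combined with the first upper estimate this yields
\[
|K_\nu(z,\lambda)|\le \frac{\nu(\{1\})+2\nu_1([0,1))}{|1-z\bar\lambda|^2}\le\frac{2\nu([0,1])}{|1-z\bar\lambda|^2}.
\]
The only substantive obstacle is the half-plane observation underlying the lower bound; once it is in place, everything else is a direct algebraic manipulation.
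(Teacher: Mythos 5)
Your proof is correct and follows essentially the same line as the paper's: decompose the kernel additively, get the upper bound from the triangle inequality, and for the lower bound show the two summands lie within a right angle of each other so they cannot cancel. Your quadrant observation for $1/(1-rw)$, $r\in[0,1]$, is equivalent to the paper's direct computation that $\operatorname{Re}\bigl((1-z\bar\lambda)/(1-rz\bar\lambda)\bigr)\ge 0$, and your Pythagoras-type estimate $|A+B|^2\ge |A|^2+|B|^2$ even yields the slightly sharper constant $1/\sqrt{2}$ in place of the paper's $1/2$.
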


\begin{proof}
  Recall the definition of $K_{\nu}$, we have 
  \begin{align*}
    |K_{\nu}(z,\lambda)| 
    &= \left| \frac{1}{1 - z\bar{\lambda}} \int_{0}^{1} \frac{1}{1 - rz\bar{\lambda}} d\nu(r) \right| \\
    &= \left| \frac{1}{1 - z\bar{\lambda}} \int_{0}^{1} \frac{1}{1 - rz\bar{\lambda}} d\nu_1(r) + \frac{\nu(\{1\})}{(1 - z\bar{\lambda})^2} \right| \\
    &\le |K_{\nu_{1}}(z,\lambda)| + \frac{\nu(\{1\})}{|1 - z\bar{\lambda}|^2}.
  \end{align*}
On the other hand, note that
\begin{align*}
  |K_{\nu}(z,\lambda)| 
  &= \left| \frac{1}{1 - z\bar{\lambda}} \int_{0}^{1} \frac{1}{1 - rz\bar{\lambda}} d\nu(r) \right| \\
  &= \frac{1}{|1 - z\bar{\lambda}|^2} \left| \int_{0}^{1} \frac{1 - z\bar{\lambda}}{1 - rz\bar{\lambda}} d\nu_{1}(r) + \nu(\{1\})\right|.
\end{align*}
We claim that for all $z, \lambda \in \mathbb{D}$ and $0 \le r \le 1$,
\[
\operatorname{Re} \frac{1 - z\bar{\lambda}}{1 - rz\bar{\lambda}} \ge 0.
\]
A direct calculation shows that 
\[
  \frac{1 - z\bar{\lambda}}{1 - rz\bar{\lambda}} = \frac{(1 - z\bar{\lambda})(1 - r\bar{z}\lambda)}{|1 - rz\bar{\lambda}|^2} = \frac{1 - z\bar{\lambda} - r \bar{z}\lambda + r|z|^2|\lambda|^2}{|1 - rz\bar{\lambda}|^2}.
\]
Then we take the real part of the equation above, and using the inequality $\operatorname{Re} (z\bar{\lambda}) \le |z\lambda|$,
\[
\operatorname{Re} \frac{1 - z\bar{\lambda}}{1 - rz\bar{\lambda}} \ge \frac{1 - |z\bar{\lambda}|(1 + r) + r|z|^2|\lambda|^2}{|1 - rz\bar{\lambda}|^2} = \frac{(1 - |z| |\lambda|)(1 - r|z||\lambda|)}{|1 - rz\bar{\lambda}|^2} \ge 0.
\]
Since the real part of $\int_{0}^{1} \frac{1 - z\bar{\lambda}}{1 - rz\bar{\lambda}} d\nu_{1}(r)$ is positive, then
\[
\left| \int_{0}^{1} \frac{1 - z\bar{\lambda}}{1 - rz\bar{\lambda}} d\nu_{1}(r) + \nu(\{1\})\right| \ge \left| \int_{0}^{1} \frac{1 - z\bar{\lambda}}{1 - rz\bar{\lambda}} d\nu_{1}(r)\right|,
\]
and
\[
 \left| \int_{0}^{1} \frac{1 - z\bar{\lambda}}{1 - rz\bar{\lambda}} d\nu_{1}(r) + \nu(\{1\})\right| \ge \nu(\{1\}).
\]
Combining the inequalities above, we have 
\begin{align*}
|K_{\nu}(z,\lambda)| 
&\ge \frac{1}{2} \frac{1}{|1 - z\bar{\lambda}|^2} \left( \left| \int_{0}^{1} \frac{1 - z\bar{\lambda}}{1 - rz\bar{\lambda}} d\nu_{1}(r)\right| + \nu(\{1\})\right)\\
&= \frac{1}{2} \left(\frac{\nu(\{1\})}{|1 - z\bar{\lambda}|^2} + |K_{\nu_{1}}(z,\lambda)|\right).
\end{align*}
This completes the proof.
\end{proof}

\section{The multiplier coefficient related to $T_\nu$}\label{S:mct}
Using an argument analogous to that of Lemma 11 in \cite{CFWY}, we have
\[
T_\nu P f = T_\nu f
\]
for all $1 < p < \infty$ and every $f \in L^p(\mathbb{D})$, where $P$ denotes the Bergman projection. This identity implies that, for $p > 1$, the boundedness of the operator $T_\nu$ on $L^p(\mathbb{D})$ is equivalent to its boundedness on the Bergman space $L_a^p(\mathbb{D})$. As a consequence, the multiplier coefficient technique on analytic function spaces yields a powerful tool for $T_\nu$. 

For $1 \leq p < \infty$, a sequence $\{m_n\}_{n=0}^\infty$ of complex numbers is called a coefficient multiplier defined on $L_a^p(\mathbb{D})$ is the mapping
\[
M_m f: \sum_{n=0}^\infty a_n z^n \mapsto \sum_{n=0}^\infty m_n a_n z^n,
\]
where $f \in L_a^p(\mathbb{D})$ with Taylor expansion 
$f(z) = \sum_{n=0}^\infty a_n z^n.$
With this definition in hand, a direct computation yields the Taylor series expansion of $T_\nu f$:
\begin{align}\label{tcm}
  T_{\nu} f(z) = \sum_{n=0}^{\infty} \left( \frac{1}{n+1} \int_0^1 \frac{1 - r^{n+1}}{1 - r} d\nu(r) \right) a_n z^n.
\end{align}
Then the operator $T_\nu$ is identified with the coefficient multiplier sequence \[\mathfrak{m} = \{m_n\}_{n=0}^\infty\]
where
\begin{align}\label{momentsequence}
m_n = \frac{1}{n+1} \int_0^1 \frac{1 - r^{n+1}}{1 - r} d\nu(r), \quad n \geq 0.
\end{align}

The following elementary result illustrates the growth property of the coefficient multiplier sequence. Let
\begin{align}\label{eq:s0_def}
s_0 = \sup\left\{ 0 \le s < 1 \,:\, \int_0^1 \frac{1}{(1 - r)^s} \, d\nu(r) < \infty \right\}.
\end{align}
Recall the quantity $c_\nu$ defined by \eqref{critical-index},
\begin{align*}
c_\nu = \sup\left\{1 \le c < 2: \int_{0}^{1} \frac{1}{(1-r^2)^{2/c'}} \,d\nu(r) < \infty\right\}.
\end{align*}
Then combining the definition of $c_{\nu}$ and \eqref{eq:s0_def}, we have 
\[
s_0 = \frac{2}{c_{\nu}'}.
\]
\begin{proposition}\label{prop:mn_limsup_estimate}
Let $\nu$ be a finite positive Borel measure on $[0,1]$ such that $\nu(\{1\}) = 0$, and let $s_0$ be defined as in \eqref{eq:s0_def}. If $0 < s_0 < 1$, we have
\[
\limsup_{n \to \infty} \frac{\log m_n}{\log(n+1)} = -s_0.
\]
\end{proposition}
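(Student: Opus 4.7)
The plan is to obtain the two matching bounds $\limsup_n \frac{\log m_n}{\log(n+1)} \leq -s_0$ and $\limsup_n \frac{\log m_n}{\log(n+1)} \geq -s_0$. The former will follow from an elementary interpolation inequality for $(1-r^{n+1})/(1-r)$, while the latter will be proved by contradiction, transferring fast decay of $m_n$ back into convergence of the integral defining $s_0$.

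For the upper bound, I would exploit the two obvious estimates
\[
\frac{1-r^{n+1}}{1-r} = \sum_{k=0}^n r^k \leq n+1, \qquad \frac{1-r^{n+1}}{1-r} \leq \frac{1}{1-r},
\]
and interpolate via $\min(a,b) \leq a^{1-s} b^{s}$ to obtain
\[
\frac{1-r^{n+1}}{1-r} \leq (n+1)^{1-s}(1-r)^{-s}, \qquad s\in[0,1].
\]
Integrating against $d\nu(r)$ and dividing by $n+1$ yields $m_n \leq (n+1)^{-s}\int_0^1(1-r)^{-s}\,d\nu(r)$. By the definition of $s_0$, for every $s<s_0$ the integral is finite, so taking $\log$ and letting $n\to\infty$ gives $\limsup_n \tfrac{\log m_n}{\log(n+1)} \leq -s$; letting $s\nearrow s_0$ produces the upper inequality.

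For the lower bound, I would first observe that on $r \in [1-1/(n+1),1)$,
\[
\sum_{k=0}^n r^k \ \geq\ (n+1)\Bigl(1-\tfrac{1}{n+1}\Bigr)^{n}\ >\ \frac{n+1}{e},
\]
(using $\log(1+1/n)<1/n$), so that
\[
m_n \ \geq\ e^{-1}\,\nu\bigl([1-\tfrac{1}{n+1},1)\bigr)\ =\ e^{-1}\mu\bigl(\tfrac{1}{n+1}\bigr),
\]
where $\mu(t):=\nu([1-t,1))$. Now suppose, towards a contradiction, that $\limsup_n \tfrac{\log m_n}{\log(n+1)} < -s_0$; then there exist $\epsilon>0$ and $n_0$ with $m_n \leq (n+1)^{-s_0-\epsilon}$ for all $n\geq n_0$. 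The above lower bound combined with the monotonicity of $\mu$ (bridging from the discrete values $1/(n+1)$ to all small $t$) produces a global estimate $\mu(t)\leq C t^{s_0+\epsilon}$ for $t\in(0,1]$. I then plug this into the integration-by-parts identity
\[
\int_0^1 \frac{d\nu(r)}{(1-r)^s} = \int_0^1 t^{-s}\,d\mu(t) = \mu(1) + s\int_0^1 \mu(t)\,t^{-s-1}\,dt,
\]
whose boundary term at $0$ vanishes because $\nu(\{1\})=0$ and $\mu(t)=O(t^{s_0+\epsilon})$. Choosing any $s\in(s_0, s_0+\epsilon)$ makes the right-hand side finite, contradicting the definition of $s_0$.

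I expect the main obstacle to be the discretization-to-continuum passage: the lower bound on $m_n$ only controls $\mu$ at the discrete points $t=1/(n+1)$, and a careful monotonicity argument is needed to promote this to a uniform polynomial bound $\mu(t)\lesssim t^{s_0+\epsilon}$ on a full neighbourhood of $0$. Once this is in hand, ensuring the boundary term in the integration by parts vanishes is automatic under the hypothesis $\nu(\{1\})=0$, and the integrability contradiction closes the argument.
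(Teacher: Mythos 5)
Your argument is correct, but it takes a genuinely different route from the paper. The paper proves Proposition \ref{prop:mn_limsup_estimate} through Lemma \ref{mnestmate1}, the series criterion that for $0<s<1$ one has $\int_0^1(1-r)^{-s}\,d\nu(r)<\infty$ if and only if $\sum_n m_n(n+1)^{s-1}<\infty$ (itself proved via the two-sided bound $m_n\asymp\int_0^1\min\{1,\tfrac{1}{(n+1)t}\}\,d\mu(t)$ for the pushforward of $\nu$ under $r\mapsto 1-r$), and then derives both inequalities of the limsup identity by contradiction from convergence or divergence of that series; the direction $\limsup\le -s_0$ in particular requires a block-sum argument over $n\in[n_k,2n_k]$ exploiting the near-monotonicity $m_n\ge\tfrac{n_k+1}{n+1}m_{n_k}$. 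You bypass the series criterion entirely: your upper bound is direct, from the interpolation $\tfrac{1-r^{n+1}}{1-r}\le\min\{n+1,(1-r)^{-1}\}\le (n+1)^{1-s}(1-r)^{-s}$, which gives $m_n\le (n+1)^{-s}\int_0^1(1-r)^{-s}\,d\nu(r)$ for every $s<s_0$ with no contradiction argument at all; and your lower bound converts the assumed decay $m_n\le(n+1)^{-s_0-\varepsilon}$ into the Carleson-type tail bound $\nu([1-t,1))\le C\,t^{s_0+\varepsilon}$ via $m_n\ge e^{-1}\nu([1-\tfrac1{n+1},1))$ and monotonicity of $t\mapsto\nu([1-t,1))$ (the ``discretization-to-continuum'' step you flag does work, since $\tfrac1{n+2}<t\le\tfrac1{n+1}$ forces $n+1\ge\tfrac1{2t}$), after which Fubini or Stieltjes integration by parts --- the boundary term at $0$ vanishing precisely because of this tail bound and $\nu(\{1\})=0$ --- yields $\int_0^1(1-r)^{-s}\,d\nu(r)<\infty$ for some $s>s_0$, contradicting the supremum in \eqref{eq:s0_def}; the only point to make explicit is that $s$ should also be taken $<1$, which is possible since $s_0<1$. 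Your route is shorter and more elementary for this particular proposition; what the paper's heavier machinery buys is reuse, since Lemma \ref{mnestmate1} also underlies Corollary \ref{prop:sum_of_mn} and the divergent-subsequence construction needed later in the necessity arguments, so the paper would need that lemma anyway.
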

For the proof, we need the following lemma.
\begin{lemma}\label{mnestmate1}
  Let $\nu$ be a finite positive Borel measure on $[0,1]$ such that $\nu(\{1\}) = 0$.
  For $0 < s < 1$, 
  \begin{align}\label{Main-LR}
\int_{0}^{1} (1 - r)^{-s} d\nu(r) < \infty\quad \text{ if and only if } \quad \sum_{n=0}^{\infty} m_n (n+1)^{s - 1}  < \infty.
  \end{align}
\end{lemma}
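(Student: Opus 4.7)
\medskip
\noindent\textbf{Proof proposal for Lemma \ref{mnestmate1}.}

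The plan is to rewrite the sum $\sum_{n=0}^\infty m_n (n+1)^{s-1}$ as a single integral against $d\nu$ and then show that the resulting kernel is comparable to $(1-r)^{-s}$ on $[0,1)$, which will give both directions of the equivalence simultaneously.

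First, by the definition \eqref{momentsequence} of $m_n$, we have $m_n (n+1)^{s-1}=(n+1)^{s-2}\int_0^1\frac{1-r^{n+1}}{1-r}\,d\nu(r)$, and since everything is nonnegative Tonelli's theorem gives
\[
\sum_{n=0}^{\infty} m_n (n+1)^{s-1}=\int_0^1 G_s(r)\,d\nu(r),\qquad G_s(r):=\frac{1}{1-r}\sum_{n=0}^{\infty}(n+1)^{s-2}\bigl(1-r^{n+1}\bigr).
\]
Thus \eqref{Main-LR} is reduced to the pointwise comparison $G_s(r)\asymp (1-r)^{-s}$ on $[0,1)$.

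To establish this comparison, I would use the telescoping identity $1-r^{n+1}=(1-r)\sum_{k=0}^{n}r^{k}$ to interchange the order of summation:
\[
G_s(r)=\sum_{k=0}^{\infty}r^{k}\sum_{n=k}^{\infty}(n+1)^{s-2}.
\]
Since $0<s<1$, a standard integral comparison (the function $x\mapsto x^{s-2}$ is positive, decreasing and integrable on $[1,\infty)$) yields
\[
\sum_{n=k}^{\infty}(n+1)^{s-2}\asymp (k+1)^{s-1}
\]
with constants depending only on $s$. Plugging this in gives $G_s(r)\asymp \sum_{k=0}^{\infty}(k+1)^{s-1}r^{k}$, and the classical asymptotic for power series with regularly varying coefficients (valid because $s>0$) gives $\sum_{k=0}^{\infty}(k+1)^{s-1}r^{k}\asymp (1-r)^{-s}$ uniformly on $[0,1)$. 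Combining these, $G_s(r)\asymp (1-r)^{-s}$ on $[0,1)$, so
\[
\sum_{n=0}^{\infty} m_n(n+1)^{s-1}\asymp \int_0^1 (1-r)^{-s}\,d\nu(r),
\]
which proves both implications of \eqref{Main-LR}.

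The main technical point is making the two-sided comparison $G_s(r)\asymp (1-r)^{-s}$ genuinely uniform on the whole interval $[0,1)$ rather than only near the endpoint $r=1$. Near $r=1$ the asymptotics cited above are essentially textbook, but away from $r=1$ one must check that the implicit constants do not degenerate; this is straightforward because both $G_s(r)$ and $(1-r)^{-s}$ are continuous and strictly positive on compact subsets of $[0,1)$, and near $r=0$ the series defining $G_s$ reduces to the convergent constant $\zeta(2-s)$ while $(1-r)^{-s}$ is bounded between $1$ and a constant. The assumption $\nu(\{1\})=0$ is used to ensure that the integral $\int_0^1(1-r)^{-s}\,d\nu(r)$ is not trivially infinite from a point mass at $1$, so that its finiteness is genuinely a statement about the decay of $\nu$ near the boundary.
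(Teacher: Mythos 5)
Your argument is correct, and it takes a genuinely different (and somewhat cleaner) route than the paper's. You immediately apply Tonelli to write the sum as $\int_0^1 G_s(r)\,d\nu(r)$ with $G_s(r)=\tfrac{1}{1-r}\sum_{n\ge 0}(n+1)^{s-2}(1-r^{n+1})$, then use the telescoping identity $1-r^{n+1}=(1-r)\sum_{k=0}^n r^k$ to rewrite $G_s(r)=\sum_{k\ge 0} r^k\sum_{n\ge k}(n+1)^{s-2}$, and finally use the two classical comparisons $\sum_{n\ge k}(n+1)^{s-2}\asymp (k+1)^{s-1}$ and $\sum_{k\ge 0}(k+1)^{s-1}r^k\asymp (1-r)^{-s}$. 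The paper instead pushes forward $\nu$ by $r\mapsto 1-r$ and first replaces $m_n$ by the surrogate kernel $\int_0^1\min\{1,\tfrac{1}{(n+1)t}\}\,d\mu(t)$ (their Claim 1), then sums this surrogate against $(n+1)^{s-1}$ and shows the resulting function $S_s(t)$ is comparable to $t^{-s}$ (their Claim 2). Your approach avoids the surrogate entirely and is shorter; the paper's surrogate estimate for $m_n$ is, however, reused elsewhere, which is presumably why they set it up that way.

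Two small points worth tightening if you were to write this out fully. First, your step $\sum_{k\ge 0}(k+1)^{s-1}r^k\asymp(1-r)^{-s}$ is stated via ``the classical asymptotic for regularly varying coefficients,'' which on its own only gives behavior as $r\to 1^-$; you do correctly note that on compact subsets of $[0,1)$ both sides are continuous and positive so the comparison extends to all of $[0,1)$, but in a final write-up you should either cite a concrete reference giving the two-sided bound or supply the short elementary argument (split at $N=\lfloor 1/(1-r)\rfloor$, use $r^N\ge e^{-2}$ for $r\ge 1/2$ for the lower bound and $\sum_{k>N}(k+1)^{s-1}r^k\le (N+1)^{s-1}\tfrac{r^{N+1}}{1-r}\le (N+1)^s$ for the tail in the upper bound). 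Second, the role of $\nu(\{1\})=0$ in your argument is not merely cosmetic: you establish the comparison $G_s(r)\asymp (1-r)^{-s}$ only for $r\in[0,1)$, so you need $\nu$ to charge only $[0,1)$ for the pointwise comparison to transfer to the integrals; saying it ``ensures the integral is not trivially infinite'' slightly misstates where the hypothesis enters.
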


\begin{proof}
Let $\varphi(t) = 1 - t$ be a function defined on $[0,1]$, and let $\mu$ denote the pushforward measure of $\nu$ under $\varphi$. Specifically, for any measurable set $E$,
\begin{equation}\label{def mu}
  \mu(E) = \nu(\varphi^{-1}(E)) = \nu\left(\left\{t \in [0,1] : 1 - t \in E\right\}\right).
\end{equation}
By the change-of-variable formula, for any $s \in (0,1)$, we have
\begin{equation}\label{left-00}
\int_0^1 (1 - r)^{-s} \,d\nu(r) = \int_0^1 t^{-s} \,d\mu(t).
\end{equation}

\begin{claim}\label{claim1}
For any non-negative integer $n$, we have
\begin{equation}\label{claim111}
\left(1 - e^{-1}\right)\int_{0}^1 \min\left\{ 1, \frac{1}{(n+1)t} \right\}d\mu(t) \le m_n \le  \int_{0}^1 \min\left\{ 1, \frac{1}{(n+1)t} \right\}d\mu(t).
\end{equation}
\end{claim}

Assuming the validity of Claim \ref{claim1}, the series
\begin{equation*}
\sum_{n=0}^{\infty}m_n (n+1)^{s-1} < \infty
\end{equation*}
if and only if 
\begin{equation}\label{sums}
\sum_{n=0}^{\infty} (n+1)^{s-1}\int_0^1 \min\left\{ 1, \frac{1}{(n+1)t} \right\}d\mu(t) < \infty.
\end{equation}

For a fixed $s \in (0,1)$, we define a function $S_s: (0,1] \to \mathbb{R}^+$ by
\[
S_s(t) = \sum_{n=0}^{\infty}(n+1)^{s - 1} \min\left\{1, \frac{1}{(n+1)t}\right\}.
\]
By the Monotone Convergence Theorem, the integral in \eqref{sums} can be rewritten as
\[
\int_{0}^{1} S_s(t)d\mu(t) = \sum_{n=0}^{\infty} (n+1)^{s-1}\int_0^1 \min\left\{ 1, \frac{1}{(n+1)t} \right\}d\mu(t).
\]
Thus, for $s \in (0,1)$,
\begin{equation}\label{R3}
\sum_{n=0}^{\infty}m_n (n+1)^{s-1} < \infty
\end{equation}
if and only if 
\begin{equation}\label{R4}
\int_{0}^{1} S_s(t)d\mu(t) < \infty.
\end{equation}

\begin{claim}\label{claimLR}
For each $s \in (0,1)$, there exist positive constants $C_1$ and $C_2$ such that for all $t \in (0,1]$,
\begin{equation}\label{LR}
C_1 t^{-s} \le S_s(t) \le C_2 t^{-s}.
\end{equation}
\end{claim}

Once Claims \ref{claim1} and \ref{claimLR} are established, the desired conclusion \eqref{Main-LR} follows immediately from \eqref{left-00}, \eqref{LR}, \eqref{R4}, and \eqref{R3}.

\medskip

We now proceed to verify Claim \ref{claimLR}.
\begin{proof}[Proof of Claim \ref{claimLR}]
For $t\in(0,1]$, let $N_t = \lfloor 1/t \rfloor$ denote the largest integer not exceeding $1/t$, i.e., $N_t \le 1/t < N_t + 1$. Then, we have
\begin{equation*}
    S_s(t)
    = \sum_{n=0}^{N_t - 1} (n+1)^{s - 1} + \frac{1}{t} \sum_{n=N_t}^{\infty} (n+1)^{s - 2}.
\end{equation*}
For $0 < t \leq 1$, the definition of $N_t = \lfloor 1/t \rfloor$ implies $N_t + 1 \geq t^{-1} \geq 1$.
Moreover, $N_t + 1 \geq 2$. We thus derive the lower bound:
\begin{equation}\label{lower_bound}
\begin{aligned}
  (N_t+1)^s - 1 &= (N_t+1)^s \left(1 - (N_t+1)^{-s}\right) \\
  &\geq (N_t+1)^s \left(1 - 2^{-s}\right) \\
  &\geq t^{-s} \left(1 - 2^{-s}\right).
\end{aligned}
\end{equation}
Recall the elementary integral test bound:
\[
    \int_{1}^{N_{t}+1} x^{s-1} dx \leq \sum_{n=0}^{N_t - 1} (n+1)^{s-1} \leq 1 + \int_{1}^{N_{t}} x^{s-1} d x.
\]
Substituting the integral result gives
\begin{equation}\label{Ss part 1}
	\frac{(N_t+1)^s - 1}{s}\leq\sum_{n=0}^{N_t - 1} (n+1)^{s-1}\leq 1+\frac{(N_t)^s - 1}{s}.
\end{equation}
Combining \eqref{lower_bound} and the left part of \eqref{Ss part 1}, we obtain
\begin{equation}\label{St 1 up}
  \sum_{n=0}^{N_t - 1} (n+1)^{s-1} \ge \frac{1 - 2^{-s}}{s}\,t^{-s}.
\end{equation}
For the complementary upper bound, we use the estimate $N_t = \lfloor 1/t \rfloor \le  t^{-1}$ for  $0 < t \leq 1$. Substituting the result in the right part of \eqref{Ss part 1} gives:
\[
  \sum_{n=0}^{N_t - 1} (n+1)^{s-1} \le 1 + \frac{(N_t)^s - 1}{s} \le 1 + \frac{(N_t)^s}{s} \le 1 + \frac{1}{s}\,t^{-s}.
\]
Since $0 < t \leq 1$ implies $t^{-s} \geq 1$, we absorb the additive constant $1$ into the leading term to get the compact upper bound:
\begin{equation}\label{St 1 lower}
  \sum_{n=0}^{N_t - 1} (n+1)^{s-1} \le \left(1 + \frac{1}{s}\right) t^{-s}.
\end{equation}
Combining \eqref{St 1 up} and \eqref{St 1 lower}, we establish the two-sided bound:
\begin{equation}\label{St1}
\frac{1 - 2^{-s}}{s}\,t^{-s} \leq \sum_{n=0}^{N_t - 1} (n+1)^{s-1} \leq \left(1 + \frac{1}{s}\right) t^{-s}.
\end{equation}

Next, we estimate
\[
\frac{1}{t}\sum_{n=N_t}^{\infty} (n+1)^{s-2}.
\]
We also use the elementary integral test bound
\[
\int_{N_t+1}^{\infty} x^{s-2} dx \le \sum_{n=N_t}^{\infty} (n+1)^{s-2} \le \int_{N_t}^{\infty} x^{s-2} d x.
\]
Substituting the integral result gives
\begin{equation}\label{Ss part 2}
 \frac{(N_t+1)^{s-1}}{1-s} \le \sum_{n=N_t}^{\infty} (n+1)^{s-2} \le \frac{(N_t)^{s-1}}{1-s}.
\end{equation}
Noting that for $0 < t \leq 1/2$, 
\[
N_t=\left\lfloor \frac{1}{t}\right\rfloor \ge \frac{1}{t}-1 \ge \frac{1}{2t},
\]
and for $1/2 < t \le 1$, 
\[
	N_t=\left\lfloor \frac{1}{t}\right\rfloor \ge 1 \ge \frac{1}{2t}.
\]
Therefore, in all cases we have \(N_t \ge 1/(2t)\). Moreover,
\[
N_t+1 \le \frac{1}{t}+1 \le \frac{2}{t}.
\]
Combining this with \eqref{Ss part 2}, we have
\begin{equation}\label{St 2}
\frac{2^{s - 1}}{1- s} t^{-s} \le \frac{1}{t}\sum_{n=N_t}^{\infty} (n+1)^{s-2} \le \frac{2^{1-s}}{1-s} t^{-s}.
\end{equation}

Finally, combining the estimates in \eqref{St1} and \eqref{St 2}, we conclude that there exist positive constants
\[
C_1 = \frac{1 - 2^{-s}}{s} + \frac{2^{s - 1}}{1 - s} \quad \text{and} \quad C_2 = 1 + \frac{1}{s} + \frac{2^{1-s}}{1-s}
\]
such that
\[
 C_1 t^{-s} \le S_s(t) \le C_2 t^{-s}
\]
for all $0<t\le 1$. This yields the desired inequality \eqref{LR} and completes the proof of Claim \ref{claimLR}.
\end{proof}

It remains to verify Claim \ref{claim1}.

\begin{proof}[Proof of Claim \ref{claim1}]
For any $n\geq 0$, set $N = n+1$. 
Using the change-of-variable formula with $t = 1 - r$, we have
  \[
  m_n = \int_0^1 \frac{1 - (1-t)^N}{Nt} d\mu(t),
  \]
  where $\mu$ is defined by \eqref{def mu}.
Hence, it suffices to show for any $t \in (0,1]$,
  \[
  \left(1 - e^{-1}\right) \min\left\{ 1, \frac{1}{Nt} \right\}\le \frac{1 - (1-t)^N}{Nt} \le  \min\left\{ 1, \frac{1}{Nt} \right\}.
  \]

\noindent\textbf{The case $0 < t \le 1/N$.} In this case,
  \begin{align}\label{small00}
  \min\left\{ 1, \frac{1}{Nt} \right\} = 1.
  \end{align}
  By Bernoulli's inequality $(1-t)^N \ge 1 - Nt$, we obtain:
  \begin{align}\label{smallupper}
\frac{1 - (1-t)^N}{Nt} \le \frac{Nt}{Nt} = 1.
  \end{align}
  For the lower bound, define
  \[
  h(t) = \frac{1 - (1-t)^N}{Nt},  \quad 0 < t \le \frac{1}{N}.
   \]
  If $N = 1$, then $h(t) = 1$. The inequality holds trivially for $N = 1$. Hence, we assume $N \ge 2$, and compute the derivative:
  \begin{align*}
    h'(t) &= \frac{N(1 - t)^{N-1} \cdot Nt - N\left(1 - (1 - t)^{N}\right)}{N^2t^2} \\
    &= \frac{Nt(1 - t)^{N-1} + (1 - t)^N - 1 }{Nt^2}.
  \end{align*}
  Let $\phi(t) = Nt(1 - t)^{N-1} + (1 - t)^N - 1$. Then
  \begin{align*}
    \phi'(t) 
    &= N(1 - t)^{N-1} - Nt(N-1)(1 - t)^{N-2} - N(1 - t)^{N-1} \\
    &= - Nt(N-1)(1 - t)^{N-2} < 0.
  \end{align*}
  Thus, $\phi$ is decreasing, and since $\lim_{t \to 0_+}\phi(t) = 0$, we have $\phi(t) < 0$ for all $t > 0$. This implies $h'(t) < 0$, so $h$ is decreasing on $(0,1/N]$. Therefore, for $t\in(0,1/N]$,
  \begin{align}\label{smalllower}
  h(t)=\frac{1 - (1-t)^N}{Nt} \ge h\left(\frac{1}{N}\right) = 1 - \left(1 - \frac{1}{N}\right)^N \ge 1 - \frac{1}{e}.
  \end{align}
  Combining \eqref{small00}, \eqref{smallupper}, and \eqref{smalllower}, for any $t\in(0,1/N]$, we have
\begin{align}\label{small}
  \left(1 - e^{-1}\right)  \min\left\{ 1, \frac{1}{Nt} \right\}\le \frac{1 - (1-t)^N}{Nt} \le  \min\left\{ 1, \frac{1}{Nt} \right\}.
\end{align}

\noindent\textbf{The case $1/N < t \le 1$.} In this case,
  \begin{align}\label{large00}
  \min\left\{ 1, \frac{1}{Nt} \right\} = \frac{1}{Nt}.
  \end{align}
Recall $N\geq 1$ and $0\leq (1-t)^N \leq 1$, so
  \begin{align}\label{largeupper}
  \frac{1 - (1-t)^N}{Nt} \le \frac{1}{Nt}.
  \end{align}
  For the lower bound, since $t > 1/N$, we have 
  \[(1-t)^N < \left(1-\frac{1}{N}\right)^N \le \frac{1}{e}.\]
  Hence, \[ 1 - (1-t)^N \ge 1 - \frac{1}{e}. \]
  Therefore,
  \begin{align}\label{largelower}
  \frac{1 - (1-t)^N}{Nt} \ge \left(1 - \frac{1}{e}\right)\frac{1}{Nt}.
  \end{align}
  Combining \eqref{largelower}, \eqref{largeupper}, and \eqref{large00}, for all $t \in (1/N, 1]$:
  \begin{align}\label{large}
  \left(1 - e^{-1}\right) \min\left\{ 1, \frac{1}{Nt} \right\}\le \frac{1 - (1-t)^N}{Nt} \le  \min\left\{ 1, \frac{1}{Nt} \right\}.
  \end{align}

  From \eqref{small} and \eqref{large}, for all $t\in (0,1]$, we have
  \begin{align*}
  \left(1 - e^{-1}\right) \min\left\{ 1, \frac{1}{Nt} \right\}\le \frac{1 - (1-t)^N}{Nt} \le  \min\left\{ 1, \frac{1}{Nt} \right\}.
  \end{align*}
  Integrating both sides with respect to $\mu$ over $[0,1]$, we obtain the two-sided inequality \eqref{claim111}. This completes the verification of Claim \ref{claim1}.
\end{proof}
We then complete the proof of Lemma \ref{mnestmate1}.
\end{proof}

Now, we are ready to prove Proposition \ref{prop:mn_limsup_estimate}. Recall that our goal is to show
\begin{align*}
\limsup_{n \to \infty} \frac{\log m_n}{\log(n+1)} = -s_0,
\end{align*}
where
\[
m_n = \frac{1}{n+1} \int_0^1 \frac{1 - r^{n+1}}{1 - r} d\nu(r), \quad n \geq 0.
\]
where $s_0$ is defined by \eqref{eq:s0_def}:
\begin{align*}
s_0 = \sup\left\{ 0 \le s < 1 \,:\, \int_0^1 (1 - r)^{-s} \, d\nu(r) < \infty \right\}.
\end{align*}

\begin{proof}
Let
\[
L = \limsup_{n \to \infty} \frac{\log m_n}{\log(n+1)}.
\]
Observe that for any non-negative integer $n$ and $r \in [0,1]$,
$
1-r^{n+1}\geq 1-r.
$
Then
\begin{align*}
L= \limsup_{n\to\infty} \frac{\log m_n}{\log (n+1)} \geq \limsup_{n\to\infty} \frac{\log\left(\frac{1}{n + 1}\right) + \log\nu\left([0,1]\right)}{\log(n + 1)} = -1,
\end{align*}
and we have
\begin{align*}
L\geq -1.
\end{align*}
Next, we establish the upper bound
\begin{align}\label{upper-L}
L \leq -s_0.
\end{align}
We proceed by contradiction. Suppose $L > -s_0$. Then there exists a real number $t$ such that
\[
L > t > -s_0.
\]
By the definition of the limit superior, there exists a subsequence $\{n_k\}_{k\in\mathbb{N}}$ and a constant $K_0 > 0$ such that for all $k > K_0$,
\[
\frac{\log m_{n_k}}{\log(n_k+1)} > t.
\]
Exponentiating both sides (noting $\log(n_k+1) > 0$ for all $k$), we obtain
\[
m_{n_k} > (n_k+1)^t.
\]
Choose $s \in \mathbb{R}$ such that $-t < s < s_0$. By Lemma \ref{mnestmate1}, the series converges:
\[
\sum_{n=0}^\infty m_n (n+1)^{s-1} < \infty.
\]
In particular, the tail sums are uniformly bounded for all $k \geq K_0$:
\begin{align}\label{block-sum}
\sup_{k \geq K_0} \sum_{n=n_k}^{2n_k} m_n (n+1)^{s-1} \leq \sum_{n=0}^\infty m_n (n+1)^{s-1} < \infty.
\end{align}
We now derive a contradiction by bounding the sum over $[n_k, 2n_k]$ from below. For any $n \in [n_k, 2n_k]$, we use the definition of $m_n$ to get:
\begin{align*}
m_n &= \frac{1}{n+1} \int_0^1 \frac{1 - r^{n+1}}{1 - r} d\nu(r) \geq \frac{1}{n+1} \int_0^1 \frac{1 - r^{n_k+1}}{1 - r} d\nu(r)\\
&= \frac{n_k+1}{n+1} m_{n_k} \geq \frac{1}{2} (n_k+1)^t. 
\end{align*}
Substituting this lower bound into the sum over $[n_k, 2n_k]$:
\begin{align*}
\sum_{n=n_k}^{2n_k} m_n (n+1)^{s-1}
&\geq \sum_{n=n_k}^{2n_k} \frac{1}{2} (n_k+1)^t (n+1)^{s-1} \\
&\geq \frac{1}{2} (n_k+1)^t (2n_k+2)^{s-1} (n_k + 1) \\
&= 2^{s - 2}\cdot (n_k + 1)^{t + s}.
\end{align*}
By our choice of $s$, we have $t + s > 0$, so
\[
\lim_{k \to \infty} (n_k+1)^{t + s} = \infty.
\]
This implies
\[
\lim_{k \to \infty} \sum_{n=n_k}^{2n_k} m_n (n+1)^{s-1} = \infty,
\]
which contradicts the uniform boundedness in \eqref{block-sum}. Thus the inequality \eqref{upper-L} follows.

Now, we prove the following inequality:
\begin{align}\label{Ls0}
L \geq -s_0.
\end{align}
We proceed by contradiction and suppose that
\[
-1 \leq L < -s_0<1.
\]
Then, there exists a real number \( \delta \) such that
\[
-1 \leq L < \delta < -s_0.
\]
Moreover, there exists a positive integer \( N \) such that for all integers \( n \geq N \),
\[
\frac{\log m_n}{\log(n+1)} < \delta.
\]
Then, we obtain an upper bound for \( m_n \):
\begin{align}\label{mn-upper}
m_n < (n+1)^\delta, \quad n \geq N.
\end{align}
Now choose a real number \( s \) satisfying
\[
s_0 < s < -\delta < 1.
\]
Since \( s_0<s <1 \), by the definition of \( s_0 \), we have
\begin{align}\label{div}
\int_0^1 \frac{1}{(1 - r)^s} \, d\nu(r) = \infty.
\end{align}
By Lemma \ref{mnestmate1}, this integral divergence implies
\[
\sum_{n=0}^\infty m_n (n+1)^{s-1} = \infty.
\]
By the upper bound \eqref{mn-upper}, we estimate the tail of the series:
\[
\sum_{n=N}^{\infty} m_n (n+1)^{s-1} < \sum_{n=N}^{\infty} (n+1)^\delta (n+1)^{s-1} = \sum_{n=N}^{\infty} (n+1)^{\delta + s - 1}.
\]
By our choice of \( s < -\delta \), we have \( \delta + s < 0 \), so the exponent satisfies
\[
\delta + s - 1 < -1.
\]
It follows that
\[
\sum_{n=0}^{\infty} m_n (n+1)^{s-1} < \infty,
\]
which contradicts the divergence result \eqref{div}. This contradiction invalidates our initial assumption, so we conclude the desired inequality \eqref{Ls0}.

By inequalities \eqref{upper-L} and \eqref{Ls0},  we complete the proof of Proposition \ref{prop:mn_limsup_estimate}.
\end{proof}

\begin{corollary}\label{prop:sum_of_mn}
Let $s_0$ be defined as in \eqref{eq:s0_def}.
If $0 < s_0 < 1$, for any $0 < \varepsilon < 1 - s_0$, there exists a strictly increasing sequence \( \{n_k\}_{k=1}^\infty \) with \( m_{n_k} \ge (n_k + 1)^{-s_0 - \varepsilon} \) such that for all \( s > s_0 \),
\[
\sum_{k = 1}^{\infty} m_{n_k} (n_k + 1)^{s - 1} = \infty.
\]  
\end{corollary}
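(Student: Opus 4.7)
The plan is a pigeonholing argument: I will define the candidate sequence to be exactly the set of indices where the desired lower bound already holds, and then show that the complementary indices contribute only a convergent series. Fix $\varepsilon \in (0, 1-s_0)$ and set
$$A = \{n \ge 0 : m_n \ge (n+1)^{-s_0-\varepsilon}\},\qquad B = \mathbb{N} \setminus A.$$
Let $\{n_k\}$ be the elements of $A$ in strictly increasing order. The bound $m_{n_k} \ge (n_k+1)^{-s_0-\varepsilon}$ is then automatic from the definition of $A$; only the divergence of $\sum_k m_{n_k}(n_k+1)^{s-1}$ requires work, and the infiniteness of $A$ itself will drop out of that argument.

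The main ingredient is Lemma \ref{mnestmate1} together with the definition \eqref{eq:s0_def} of $s_0$: these yield $\sum_{n \ge 0} m_n(n+1)^{s-1} = \infty$ for every $s > s_0$. I plan to split this divergent full sum as $\sum_A + \sum_B$. On $B$, the defining inequality $m_n < (n+1)^{-s_0-\varepsilon}$ gives
$$\sum_{n \in B} m_n (n+1)^{s-1} \; \le \; \sum_{n \ge 0} (n+1)^{s-s_0-\varepsilon-1},$$
and the right-hand side converges precisely when $s < s_0+\varepsilon$. Hence for every $s \in (s_0, s_0+\varepsilon)$ the $B$-part is finite; subtracting it from the infinite full sum forces $\sum_{n \in A} m_n(n+1)^{s-1} = \infty$, so in particular $A$ is infinite and the enumeration $\{n_k\}$ makes sense.

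To extend divergence from $s \in (s_0, s_0+\varepsilon)$ to every $s > s_0$, I will invoke the trivial monotonicity that since $n_k+1 \ge 1$ the map $s \mapsto (n_k+1)^{s-1}$ is nondecreasing, so $s \mapsto \sum_k m_{n_k}(n_k+1)^{s-1}$ is nondecreasing as well; divergence at a single $s_* \in (s_0, s_0+\varepsilon)$ therefore propagates to all $s \ge s_*$, and combined with the range $(s_0, s_0+\varepsilon)$ already handled this covers every $s > s_0$. I do not foresee a real obstacle: the whole argument reduces to a good-set/bad-set partition together with the divergence supplied by Lemma \ref{mnestmate1}; Proposition \ref{prop:mn_limsup_estimate} enters only implicitly, through the definition of $s_0$.
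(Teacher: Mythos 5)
Your proposal is correct and follows essentially the same route as the paper: split the full series into the set where $m_n \ge (n+1)^{-s_0-\varepsilon}$ and its complement, bound the complementary part by $\sum (n+1)^{s-s_0-\varepsilon-1}<\infty$ for $s$ slightly above $s_0$, use Lemma \ref{mnestmate1} with the definition of $s_0$ to force divergence on the good set, and extend to all $s>s_0$ by monotonicity in $s$. The only (harmless) difference is that you obtain the infinitude of the good set as a byproduct of the divergence, whereas the paper first invokes Proposition \ref{prop:mn_limsup_estimate} to know $\Lambda$ is infinite and phrases the splitting step as a contradiction.
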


\begin{proof}

  We proceed by contradiction. For any $0 < \varepsilon < 1 - s_0$, by Proposition \ref{prop:mn_limsup_estimate} and the definition of limit superior, the set $$\Lambda = \{ n : m_n \ge (n+1)^{-s_0 - \varepsilon}\}$$ is infinite,
  suppose $s_0 < s \le s_0 + \frac{\varepsilon}{2}$ such that
  \[
  \sum_{n \in \Lambda} m_n (n+1)^{s - 1} = C < \infty.
  \]
  Observe that
  \[
  \sum_{n=0}^{\infty} m_n (n+1)^{s - 1} 
    = \sum_{n \in \Lambda} m_n (n+1)^{s - 1} + \sum_{n \notin \Lambda} m_n (n+1)^{s - 1} .
  \]
  Then, we have
  \begin{align*}
    \sum_{n=0}^{\infty} m_n (n+1)^{s - 1} 
    &\le \ C + \sum_{n \notin \Lambda} (n+1)^{-s_0 - \varepsilon} (n+1)^{s - 1} \\
    &= C + \sum_{n \notin \Lambda} (n+1)^{s - s_0 - \varepsilon - 1} \le C + \sum_{n=0}^{\infty} (n+1)^{-\frac{\varepsilon}{2} - 1} < \infty.
  \end{align*}
  By Lemma \ref{mnestmate1}, this is a contradiction for the definition of $s_0$. Therefore, we have
  \[
  \sum_{n \in \Lambda} m_n (n+1)^{s - 1} = \infty, \quad s_0 < s \le s_0 + \frac{\varepsilon}{2}.
  \]
  By monotonicity of $\sum_{n \in \Lambda} m_{n_k}(n_k + 1)^{s - 1}$ in $s$. Then for $s > s_0 + \frac{\varepsilon}{2}$, we obtain 
  \[
  \sum_{n \in \Lambda} m_n (n+1)^{s - 1} = \infty.
  \]
  This completes the proof.
\end{proof}

\section{The $\alpha$-Bergman-CZOs on the unit disk}
For the endpoint estimates needed in this paper, we recall some results on $\alpha$-Bergman-CZOs on the unit disk.
One will note that, in the study of the Shimorin-type operator $T_\nu$, the related $\alpha$ is completely determined by $c_\nu$.

\begin{definition}
Let $\alpha\in[1,2]$. A kernel $K:\mathbb{D}\times\mathbb{D}\to\mathbb{C}$, which is analytic in the first variable and anti-analytic in the second variable, is said to be an $\alpha$-Bergman--Calderón--Zygmund kernel on the unit disk $\mathbb{D}$ if there exists a positive constant $C$ such that the following two conditions hold:
\begin{align}\label{kernel-size}
|K(z,\lambda)|\leq \frac{C}{|1-z\overline{\lambda}|^\alpha}
\end{align}
and
\begin{align}\label{kernel-smooth}
|\partial_z K(z,\lambda)|+|\partial_{\bar{\lambda}} K(z,\lambda)|\leq \frac{C}{|1-z\overline{\lambda}|^{\alpha+1}}.
\end{align}
Moreover, an integral operator
\[
T_K f(z)=\int_\mathbb{D}K(z,\lambda)f(\lambda)dA(\lambda)
\]
is called an $\alpha$-Bergman-Calderón-Zygmund operator ($\alpha$-Bergman-CZO) if its kernel $K$ is an $\alpha$-Bergman--Calderón--Zygmund kernel.
\end{definition}

For \( 1\leq p < \infty \), recall that the weak \( L^{p,\infty}(\mathbb{D}) \) space is defined as the collection of all measurable functions \( f \) for which the weak \( L^p \) quasi-norm
\[
\| f \|_{L^{p,\infty}(\mathbb{D})} = \inf \left\{ C > 0 : d_f(t) \leq \frac{C^p}{t^p}, \quad \text{for all } \ t > 0 \right\}
\]
is finite. Here, \( d_f(t) \) denotes the distribution function of \( f \), given by
\[
d_f(t) = |\{ z \in \mathbb{D} : |f(z)| > t \}|,
\]
where $|\cdot|$ is the normalized area measure of a measurable set.
%and \( \mathbf{1}_A \) is the indicator function of a set \( A \): explicitly, \( \mathbf{1}_A(z) = 1 \) if \( z \in A \), and \( \mathbf{1}_A(z) = 0 \) otherwise.

We next recall the definition of the space $\operatorname{BMO}(\mathbb{D})$, which consists of all measurable functions $f: \mathbb{D} \to \mathbb{C}$ satisfying
\[
\| f \|_{\operatorname{BMO}(\mathbb{D})} = \sup_{Q \subset \mathbb{D}} E_Q\left(|f - E_Q(f)|\right) < \infty,
\]
where the supremum is taken over all Euclidean cubes $Q$ in the unit disk $\mathbb{D}$. Here, $E_Q(f)$ denotes the average value of $f$ over $Q$, defined as
\[
E_Q(f) = \frac{1}{|Q|} \int_Q f(z) \, dA(z),
\]
and $|Q|$ is the normalized Lebesgue measure of the cube $Q$. 

\begin{remark}\label{BMO-bloch}
In the definition of $\operatorname{BMO}(\mathbb{D})$, the cubes $Q \subset \mathbb{D}$ can be replaced by any homogeneous balls in $\mathbb{D}$ (see \cite[Propositions 24 and 25]{FGW}). Recall that the Bloch space $\mathcal{B}$ is the space of holomorphic functions $f$ on $\mathbb{D}$ satisfying
\[
\| f \|_{\mathcal{B}} = |f(0)| + \sup_{z \in \mathbb{D}} (1 - |z|^2) |f'(z)| < \infty,
\]
where $\| \cdot \|_{\mathcal{B}}$ denotes the Bloch norm. A classical theorem due to Coifman–Rochberg–Weiss \cite[pp. 632]{CRW} states that there exist positive constants $C_1, C_2 > 0$ such that
\[
C_1 \| f \|_{\mathcal{B}} \leq \| f \|_{\operatorname{BMO}(\mathbb{D})} \leq C_2 \| f \|_{\mathcal{B}}
\]
for every $f \in \mathcal{B}$. Note that \( T_K f \) is holomorphic on the unit disk. Hence, 
for the integral operator $T_K$ the BMO-type estimate is equivalent to a Bloch estimate.
\end{remark}

\begin{proposition}\label{alpha Bergman CZO}
Let $\alpha\in[1,2]$. Suppose that $T_K$ is an $\alpha$-Bergman--Calderón--Zygmund operator. Then
\begin{itemize}
    \item [(a)] If $1 < p < \frac{2}{2 - \alpha}$ and $1/q = 1/p + \alpha/2 - 1$, then $T_{K}$ is bounded from $L^p(\mathbb{D})$ to $L^q(\mathbb{D})$;
    \item [(b)] $T_{K}$ is bounded from $L^1(\mathbb{D})$ to $L^{\frac{2}{\alpha},\infty}(\mathbb{D})$;
    \item [(c)] $T_{K}$ is bounded from $L^{\frac{2}{2 - \alpha}}(\mathbb{D})$ to $\mathcal{B}$. 
\end{itemize}
\end{proposition}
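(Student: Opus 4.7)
The plan is to treat the three claims separately, since (a) and (b) depend only on the size estimate \eqref{kernel-size} while (c) is driven by the smoothness estimate \eqref{kernel-smooth}. By \eqref{kernel-size},
\[
|T_K f(z)| \le C \int_\mathbb{D} \frac{|f(\lambda)|}{|1-z\overline{\lambda}|^\alpha}\,dA(\lambda),
\]
so both (a) and (b) reduce to the corresponding bounds for the positive Forelli-Rudin-type operator $\tilde K_\alpha g(z) = \int_\mathbb{D} |1-z\overline{\lambda}|^{-\alpha} g(\lambda)\,dA(\lambda)$. Morally, $|1-z\overline{\lambda}|^{-\alpha}$ is a Riesz potential of order $2-\alpha$ on the space of homogeneous type $(\mathbb{D},|1-z\overline{\lambda}|^{1/2},dA)$, whose homogeneous dimension is $2$; this is exactly why the Hardy–Littlewood–Sobolev scaling $1/q = 1/p + \alpha/2 - 1$ appears in (a).

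For (a), I would apply a Schur-type test with weights $h(z) = (1-|z|^2)^{-t}$, exploiting the classical Forelli-Rudin integral
\[
\int_\mathbb{D} \frac{(1-|\lambda|^2)^c}{|1-z\overline{\lambda}|^{2+c+d}}\,dA(\lambda) \le C(1-|z|^2)^{-d}, \quad c > -1,\ d > 0,
\]
and choosing the exponents so that the test inequality for $\tilde K_\alpha$ and its dual version simultaneously balance; this forces exactly the relation $1/q = 1/p + \alpha/2 - 1$. Alternatively, since $|T_K f| \le C\tilde K_\alpha |f|$, (a) follows at once from the known $L^p$–$L^q$ mapping properties of the Forelli-Rudin/Bergman-type fractional operator established in \cite{CFWY}. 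For (b), the weak-type $(1,2/\alpha)$ bound is the standard weak endpoint of the fractional integral $\tilde K_\alpha$; it can be obtained by a Calderón–Zygmund decomposition on $(\mathbb{D},dA)$ adapted to the quasidistance $|1-z\overline{\lambda}|^{1/2}$, or directly by a distributional/layer-cake argument combined with the Forelli-Rudin estimate above.

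For (c), the key input is Remark \ref{BMO-bloch}: since $T_K f$ is holomorphic, it suffices to produce a uniform Bloch bound. Differentiating under the integral and applying \eqref{kernel-smooth},
\[
(1-|z|^2)\,|(T_K f)'(z)| \le C(1-|z|^2)\int_\mathbb{D} \frac{|f(\lambda)|}{|1-z\overline{\lambda}|^{\alpha+1}}\,dA(\lambda).
\]
Hölder's inequality with exponents $p = 2/(2-\alpha)$ and $p' = 2/\alpha$ yields
\[
(1-|z|^2)\,|(T_K f)'(z)| \le C\|f\|_{L^{2/(2-\alpha)}}(1-|z|^2)\left(\int_\mathbb{D} \frac{dA(\lambda)}{|1-z\overline{\lambda}|^{2+2/\alpha}}\right)^{\alpha/2}.
\]
Since $2+2/\alpha > 2$, the Forelli-Rudin formula (with $c=0$, $d = 2/\alpha$) gives that the inner integral is of size $(1-|z|^2)^{-2/\alpha}$, so raising to the $\alpha/2$ power produces $(1-|z|^2)^{-1}$ which is absorbed by the leading $(1-|z|^2)$ factor, yielding the uniform Bloch bound.

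The main obstacle is part (a): the precise bookkeeping of weights in the Schur test, or the interpolation argument that stitches (b) with the BMO/Bloch endpoint (c), so that every interior exponent on the line $1/q = 1/p + \alpha/2 - 1$ is captured. The endpoint (c) is essentially immediate once the Bloch–BMO identification is invoked, and the weak-type estimate (b) is a routine instance of the fractional integral machinery on spaces of homogeneous type.
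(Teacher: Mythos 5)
Your part (c) is essentially the paper's own proof (differentiate under the integral, apply the smoothness bound \eqref{kernel-smooth}, H\"older with conjugate exponent $2/\alpha$, and the Forelli--Rudin estimate), up to the trivial omission of the term $|T_Kf(0)|$ in the Bloch norm, which the size estimate handles. Your sketch of (b) for $1\le\alpha<2$ is also fine in outline. The genuine gap is in part (a), together with the case $\alpha=2$. For (a) you offer two routes and complete neither: the Schur-test route needs an \emph{off-diagonal} Schur test (Okikiolu/Zhao type, cf.\ \cite{Zhao2015}) because the standard symmetric test does not produce $L^p\to L^q$ bounds with $q>p$, and you never fix the weights --- indeed you yourself flag this as ``the main obstacle''; the citation route invokes \cite{CFWY}, but their theorem concerns the analytic-kernel operator $K_\alpha$, whereas your pointwise domination $|T_Kf|\le C\widetilde K_\alpha|f|$ requires boundedness of the \emph{positive} majorant with kernel $|1-z\overline{\lambda}|^{-\alpha}$, so as stated the citation does not close the argument (their sufficiency proofs do control the positive operator, but this must be said). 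The alternative you mention --- interpolating between the weak $(1,2/\alpha)$ endpoint and the $L^{2/(2-\alpha)}\to \mathrm{BMO}$ endpoint --- needs a Fefferman--Stein type interpolation theorem in this off-diagonal setting, which is heavier machinery and is not carried out.

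Moreover, your whole fractional-integral framework tacitly assumes $\alpha<2$. At $\alpha=2$ (which the paper actually needs, for $c_\nu=1$), claim (a) becomes $L^p\to L^p$ boundedness for all $1<p<\infty$ and claim (b) a weak $(1,1)$ estimate; neither is an instance of the Hardy--Littlewood--Sobolev/fractional endpoint, and the weak $(1,1)$ bound in particular is not obtained from the size estimate by positive domination in your scheme. The paper treats $\alpha=2$ separately by quoting classical results for the Bergman projection (\cite{DHZZ}, \cite{Zhu14}). For $1\le\alpha<2$ the clean completion is exactly the paper's one-line argument you stop short of: since $|1-z\overline{\lambda}|\ge|z-\lambda|$ on $\mathbb{D}\times\mathbb{D}$, the size estimate \eqref{kernel-size} gives $|T_Kf(z)|\le C\int_{\mathbb{D}}|f(\lambda)|\,|z-\lambda|^{-\alpha}\,dA(\lambda)$, and the Hardy--Littlewood--Sobolev theorem on $\mathbb{R}^2$ \cite{Gra} then yields both the interior strong bounds (a) and the weak endpoint (b) simultaneously, with no Schur test or BMO interpolation required.
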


\begin{proof}[Proof of Proposition \ref{alpha Bergman CZO}]
If $\alpha = 2$, the proof of (a) and (b) in Proposition \ref{alpha Bergman CZO} follows from the results of classical Bergman projections (see \cite[Theorem 1.1]{DHZZ} and \cite{Zhu14}).
Let $\alpha\in[1,2)$. By \eqref{kernel-size}, we have
\[
T_Kf(z)\leq C\int_{\mathbb{D}}\frac{|f(\lambda)|}{|1-z\overline{\lambda}|^\alpha}dA(\lambda)
\leq C\int_{\mathbb{D}}\frac{|f(\lambda)|}{|z-\lambda|^\alpha}dA(\lambda).
\]
Hence, by the Hardy–Littlewood–Sobolev theorem (see \cite[Chapter 6]{Gra}), we obtain (a) and (b).
For the proof of $(c)$, observe that for $f \in L^{\frac{2}{2 - \alpha}}(\mathbb{D})$, we have 
  \[
  (T_{K}f)'(z) = \partial_z \int_{\mathbb{D}} K(z,\lambda)f(\lambda)dA(\lambda) = \int_{\mathbb{D}}\partial_{z} K(z,\lambda) f(\lambda)dA(\lambda).
  \]
By the smoothness condition \eqref{kernel-smooth}, there exists a constant $C$ such that for any $z, \lambda \in\mathbb{D}$
  \[
  |\partial_z K(z,\lambda)|\leq \frac{C}{|1-z\overline{\lambda}|^{\alpha+1}}.
  \]
  Recall the Forelli--Rudin type estimate \cite[Lemma 3.10]{Zhu14}: for $t > -1$ and $c > 0$, we have
  \begin{equation}\label{eq:FR}
    \int_{\mathbb{D}} \frac{(1-|\lambda|^2)^t}{|1-z\overline{\lambda}|^{2+c+t}} dA(\lambda)
    \le C_{t,c}\,(1-|z|^2)^{-c},\quad z\in\mathbb{D}.
  \end{equation}
  Thus, we have
  \begin{align*}
    |(T_{K}f)'(z)| 
    &= \left |\int_{\mathbb{D}}\partial_{z} K(z,\lambda) f(\lambda)dA(\lambda) \right |\\
    &\le C \int_{\mathbb{D}} \frac{1}{|1 - z\overline{\lambda}|^{\alpha + 1}}|f(\lambda)|dA(\lambda). 
  \end{align*} 
  By H\"older's inequality and take $t = 0, c = \alpha$ in \eqref{eq:FR},
  \begin{align*}
    \int_{\mathbb{D}} \frac{1}{|1 - z\overline{\lambda}|^{\alpha + 1}}|f(\lambda)|dA(\lambda) 
    &\le  \left( \int_{\mathbb{D}} \frac{1}{|1 - z\overline{\lambda}|^{2 + 2/\alpha}}dA(\lambda)\right)^{\alpha/2} \|f\|_{L^{\frac{2}{2 - \alpha}}(\mathbb{D})}\\
    &\le  C_{0,\alpha} \frac{1}{1 - |z|^2}\|f\|_{L^{\frac{2}{2 - \alpha}}(\mathbb{D})}.
  \end{align*}
  Hence, we have
  \[
  \sup_{z \in \mathbb{D}} (1 - |z|^2)|(T_{K}f)'(z)| \le C C_{0,\alpha} \|f\|_{L^{\frac{2}{2 - \alpha}}(\mathbb{D})}.
  \]
  Moreover,
  \[
  |T_{K}f(0)| \le \int_{\mathbb{D}} |K(0,\lambda)| |f(\lambda)| dA(\lambda) \le C \|f\|_{L^{\frac{2}{2 - \alpha}}(\mathbb{D})} \left( \int_{\mathbb{D}} dA(\lambda) \right)^{\alpha/2} \le C \|f\|_{L^{\frac{2}{2 - \alpha}}(\mathbb{D})}.
  \]
  Combining these two estimates, we obtain
  \[\|T_{K}f\|_{\mathcal{B}} \le (C C_{0,\alpha} + C) \|f\|_{L^{\frac{2}{2 - \alpha}}(\mathbb{D})}.\]
  Hence, we conclude that $T_{K}: L^{\frac{2}{2 - \alpha}}(\mathbb{D}) \to \mathcal{B}$ is bounded.
  This completes the whole proof.
\end{proof}

Given a finite positive Borel measure $\nu$ on $[0,1]$, recall the quantity $c_\nu$ defined by \eqref{critical-index},
\begin{align*}
c_\nu = \sup\left\{1 \le c < 2: \int_{0}^{1} \frac{1}{(1-r^2)^{2/c'}} \,d\nu(r) < \infty\right\}.
\end{align*}
\begin{lemma}\label{u-estimate of K}
Let $\nu$ be a finite positive Borel measure on $[0,1]$ such that $1 \le c_{\nu} < 2$ and $\nu(\{1\}) = 0$. Suppose that $\nu$ satisfies the 
$2/c_\nu'$-Carleson condition, namely,
\begin{equation}\label{2/c-Carleson condition}
C_{c_\nu,\nu} = \sup_{0 < t \le 1} \frac{\nu([1 - t, 1))}{t^{2 - 2/c_\nu}} < \infty.
\end{equation}
Then the Shimorin-type operator $T_\nu$ is a $2/c_\nu$-Bergman--Calderón--Zygmund operator.
\end{lemma}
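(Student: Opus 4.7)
The plan is to verify directly the two defining estimates of a $2/c_\nu$-Bergman--Calder\'on--Zygmund kernel for $K_\nu(z,\lambda)$. Set $\alpha = 2/c_\nu \in (1,2]$ and $\delta = |1-z\overline{\lambda}|$. Note first that $K_\nu$ is plainly analytic in $z$ and anti-analytic in $\lambda$, being a Cauchy-type integral of a positive measure. The Carleson hypothesis \eqref{2/c-Carleson condition} reads $\nu([1-t,1)) \le C_{c_\nu,\nu}\, t^{2-\alpha}$ for every $t\in(0,1]$.

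The core technical step is a family of integral estimates: for $k=1,2$,
\[
\int_0^1 \frac{d\nu(r)}{|1-rz\overline{\lambda}|^k} \;\le\; C\,\delta^{2-\alpha-k}.
\]
To prove this, I would combine Lemma~\ref{zw estimate} (which yields $|1-rz\overline{\lambda}| \ge \delta/2$) with the elementary bound $|1-rz\overline{\lambda}| \ge 1-r|z||\lambda| \ge 1-r$ to obtain $|1-rz\overline{\lambda}| \ge \tfrac{1}{2}\max(1-r,\delta)$. Split the integral at $r=1-\delta$. On $[1-\delta,1)$ the Carleson condition gives at once
\[
\int_{[1-\delta,1)} \frac{d\nu(r)}{|1-rz\overline{\lambda}|^k} \;\lesssim\; \frac{\nu([1-\delta,1))}{\delta^k} \;\le\; C\,\delta^{2-\alpha-k}.
\]
On $[0,1-\delta]$ I would decompose dyadically into shells $I_j=[1-2^{j+1}\delta,\,1-2^j\delta]$ for integers $j\ge 0$ with $2^{j+1}\delta\le 1$. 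On $I_j$ one has $1-r\asymp 2^j\delta$ and $\nu(I_j) \le \nu([1-2^{j+1}\delta,1)) \le C(2^{j+1}\delta)^{2-\alpha}$, so
\[
\int_{[0,1-\delta]} \frac{d\nu(r)}{|1-rz\overline{\lambda}|^k} \;\lesssim\; \sum_{j\ge 0} \frac{(2^{j+1}\delta)^{2-\alpha}}{(2^j\delta)^k} \;=\; C\,\delta^{2-\alpha-k}\sum_{j\ge 0} 2^{j(2-\alpha-k)}.
\]
The geometric series converges precisely because $\alpha>1$ when $c_\nu<2$ (exponent $1-\alpha<0$ for $k=1$, $-\alpha<0$ for $k=2$); in the boundary case $c_\nu=1$ one may alternatively appeal to the universal bound $|K_\nu|\le 2\nu([0,1])/\delta^2$ recorded in Proposition~\ref{K estimates with nu1 neq 0}.

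With these bounds in hand the size estimate follows from the $k=1$ case:
\[
|K_\nu(z,\lambda)| \;=\; \frac{1}{\delta} \left| \int_0^1 \frac{d\nu(r)}{1-rz\overline{\lambda}} \right| \;\le\; \frac{C\,\delta^{1-\alpha}}{\delta} \;=\; \frac{C}{|1-z\overline{\lambda}|^{\alpha}}.
\]
For the smoothness estimate I would differentiate under the integral sign,
\[
\partial_z K_\nu(z,\lambda) = \frac{\overline{\lambda}}{(1-z\overline{\lambda})^2}\!\int_0^1 \frac{d\nu(r)}{1-rz\overline{\lambda}} + \frac{\overline{\lambda}}{1-z\overline{\lambda}}\!\int_0^1 \frac{r\,d\nu(r)}{(1-rz\overline{\lambda})^2},
\]
use $|\overline{\lambda}|\le 1$ and $r\le 1$, and apply the $k=1$ and $k=2$ cases of the core estimate to each summand, yielding $|\partial_z K_\nu(z,\lambda)| \le C\delta^{-\alpha-1}$. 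The computation for $\partial_{\overline{\lambda}}K_\nu$ is symmetric via the identical double-integral representation in the conjugate variable.

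The main obstacle is the convergence of $\sum_j 2^{j(2-\alpha-k)}$, which hinges on $\alpha>1$; this is exactly why the lemma excludes the critical case $c_\nu=2$, to be treated separately under the hyperbolic arc-length integrability condition \eqref{hyper-type-0}. The Carleson scaling exponent $2-\alpha$ is precisely calibrated against the size $2^j\delta$ of the shells, so the argument is tight and no dyadic mass is wasted.
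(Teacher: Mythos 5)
Your proof is correct, and its core technical step takes a genuinely different route from the paper. Both arguments start from the same ingredients: the pointwise lower bound $|1-rz\overline{\lambda}|\ge \max\{\tfrac12|1-z\overline{\lambda}|,\,1-r\}$ and the same formula for $\partial_z K_\nu$, with the anti-holomorphic derivative handled by the conjugate symmetry $K_\nu(z,\lambda)=\overline{K_\nu(\lambda,z)}$. Where you diverge is in how the Carleson condition is converted into the bounds $\int_0^1 |1-rz\overline{\lambda}|^{-k}\,d\nu(r)\lesssim \delta^{2-\alpha-k}$: you split at $r=1-\delta$ and sum over dyadic shells $[1-2^{j+1}\delta,1-2^{j}\delta]$, using $\nu(I_j)\lesssim (2^{j}\delta)^{2-\alpha}$ and a convergent geometric series (exponent $2-\alpha-k<0$, which is exactly where $c_\nu<2$ enters for $k=1$), whereas the paper splits at $r=1-\tfrac12\delta$ and performs a Stieltjes integration by parts in the distribution function $\Phi(u)=\nu([1-u,1])$, where the boundary term neatly cancels the near-region contribution and yields explicit constants. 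Your dyadic argument is more elementary and in fact treats $c_\nu=1$ (i.e.\ $\alpha=2$) uniformly, since then the Carleson condition is mere finiteness of $\nu$ and both series still converge, so the fallback to the universal bound is optional; the paper instead handles $c_\nu=1$ separately by the universal estimates, as you also note. Two small points to patch when writing this up: (i) your shells with $2^{j+1}\delta\le 1$ do not quite exhaust $[0,1-\delta]$; on the leftover piece one has $1-r>\tfrac12$, so its contribution is at most $2^{k}\nu([0,1])\lesssim \delta^{2-\alpha-k}$ because $\delta\le 2$ and the exponent is negative; (ii) when $\delta>1$ the Carleson condition at $t=\delta$ is not literally available, but $\nu([0,1))\le C_{c_\nu,\nu}$ (the case $t=1$) together with $\delta^{2-\alpha-k}\gtrsim 1$ gives the same bound. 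Neither affects the validity of the argument.
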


\begin{corollary}\label{cor:forcnu1}
Let $\nu$ be a finite positive Borel measure on $[0,1]$ with $c_{\nu}=1$. Then the Shimorin-type operator $T_\nu$ is a $2$-Bergman--CZO on the unit disk.
\end{corollary}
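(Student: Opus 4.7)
The plan is to verify directly that $K_\nu$ satisfies both the size condition \eqref{kernel-size} and the smoothness condition \eqref{kernel-smooth} with $\alpha=2$, using only Lemma \ref{zw estimate}. Since $K_\nu(\cdot,\lambda)$ is analytic and $K_\nu(z,\cdot)$ is anti-analytic (visible from the integral representation or from the Taylor expansion \eqref{tcm}), the analyticity requirements are automatic. Note that the hypothesis $c_\nu=1$ is the case not covered by Lemma \ref{u-estimate of K} (which requires $\nu(\{1\})=0$ and produces a stronger $2/c_\nu$-Bergman--CZK conclusion when $c_\nu>1$); in particular $\nu$ here may well have an atom at $r=1$.

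For the size bound, I would invoke the universal estimate already recorded in the introduction, which follows immediately from Lemma \ref{zw estimate}: since $|1-rz\overline\lambda|\ge |1-z\overline\lambda|/2$ uniformly in $r\in[0,1]$,
\[
|K_\nu(z,\lambda)|\le \frac{1}{|1-z\overline\lambda|}\int_0^1 \frac{d\nu(r)}{|1-rz\overline\lambda|}\le \frac{2\,\nu([0,1])}{|1-z\overline\lambda|^2}.
\]
For the smoothness bound, differentiate under the integral sign:
\[
\partial_z K_\nu(z,\lambda)=\int_0^1\!\left[\frac{\overline\lambda}{(1-z\overline\lambda)^2(1-rz\overline\lambda)}+\frac{r\,\overline\lambda}{(1-z\overline\lambda)(1-rz\overline\lambda)^2}\right]d\nu(r).
\]
Applying Lemma \ref{zw estimate} to each factor of the form $(1-rz\overline\lambda)^{-k}$ with $k\in\{1,2\}$ converts it to $2^k/|1-z\overline\lambda|^k$; using $|r\overline\lambda|\le 1$ then yields
\[
|\partial_z K_\nu(z,\lambda)|\le \frac{6\,\nu([0,1])}{|1-z\overline\lambda|^3}.
\]
An entirely symmetric computation, with the roles of $z$ and $\overline\lambda$ interchanged in the numerators, gives the same bound for $|\partial_{\overline\lambda}K_\nu(z,\lambda)|$.

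The only non-routine step is the differentiation/bookkeeping for the smoothness estimate, but there is no genuine obstacle: the key inequality $|1-z\overline\lambda|\le 2|1-rz\overline\lambda|$ is uniform in $r\in[0,1]$, so the bounds pass through the integration against $\nu$ without difficulty. Combining the two estimates shows that $K_\nu$ is a $2$-Bergman--Calderón--Zygmund kernel and therefore $T_\nu$ is a $2$-Bergman--CZO, as claimed.
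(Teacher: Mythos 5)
Your proof is correct, and it takes a slightly different route from the paper. The paper proves the corollary by a case distinction: when $\nu(\{1\})=0$ it simply cites Lemma \ref{u-estimate of K}, and when $\nu(\{1\})\neq 0$ it decomposes $\nu=\nu_1+\nu(\{1\})\delta_1$, splits the kernel as $K_{\nu_1}(z,\lambda)+\nu(\{1\})(1-z\overline{\lambda})^{-2}$, and writes $T_\nu=T_{\nu_1}+\nu(\{1\})P$, treating the atomic part as a multiple of the Bergman projection. You instead verify the size and smoothness bounds directly for the full measure, observing (correctly) that the only input, Lemma \ref{zw estimate}, i.e.\ $|1-z\overline{\lambda}|\le 2|1-rz\overline{\lambda}|$, is uniform over all $r\in[0,1]$ including $r=1$; your computation is in fact the same one the paper carries out inside the $c_\nu=1$ case of Lemma \ref{u-estimate of K}, and your point is that the hypothesis $\nu(\{1\})=0$ there is never used in that case. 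This buys a single unified argument with explicit constants ($2\nu([0,1])$ and $6\nu([0,1])$, matching the paper's), at the cost of redoing the differentiation and bookkeeping rather than quoting the lemma; the paper's decomposition, conversely, reuses existing statements but needs the extra splitting step. The symmetry argument for $\partial_{\overline{\lambda}}K_\nu$ via $K_\nu(z,\lambda)=\overline{K_\nu(\lambda,z)}$ is exactly what the paper uses as well, so no gap there.
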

\begin{proof}
When $\nu(\{1\})=0$, the statement is a direct consequence of Lemma \ref{u-estimate of K}.  
If $\nu(\{1\})\ne0$, decompose $\nu$ as  
\[
\nu = \nu_1 + \nu(\{1\})\delta_1,
\]  
where $\nu_1(I)=\nu\big([0,1)\cap I\big)$ for any Borel set $I\subset [0,1]$.  
Then $\nu_1$ is a finite positive Borel measure on $[0,1]$ with $\nu_1(\{1\})=0$, and the kernel splits as  
\[
K_\nu(z,\lambda)=K_{\nu_1}(z,\lambda)+\frac{\nu(\{1\})}{(1-z\overline{\lambda})^2}.
\]  
Hence  
\[
T_{K_\nu}=T_{K_{\nu_1}}+\nu(\{1\})P,
\]  
with $P$ the Bergman projection. By Lemma \ref{u-estimate of K}, $T_{\nu_1}$ is a $2/c_{\nu_1}$-Bergman–CZO (in particular a $2$-Bergman–CZO), while $P$ is a standard $2$-Bergman–CZO. Therefore $T_{K_\nu}$ is also a $2$-Bergman–CZO.
This completes the whole proof.
\end{proof}
\begin{proof}[Proof of Lemma \ref{u-estimate of K}]
Let $\alpha = 2/c_\nu$. We aim to show that $K_\nu$ satisfies the conditions \eqref{kernel-size} and \eqref{kernel-smooth} for this $\alpha$. 

\noindent\textbf{The case $c_\nu = 1$.} 
In this case, $\alpha = 2$. 
For the size estimate by Lemma \ref{zw estimate}, we have
\begin{align}\label{universal_estimates}
|K_\nu(z,\lambda)| &\le \frac{1}{|1-z\overline{\lambda}|} \int_0^1 \frac{1}{|1-rz\overline{\lambda}|} d\nu(r) \le  \frac{2\nu([0,1])}{|1-z\overline{\lambda}|^2}.
\end{align}
For the smooth estimate, since $K_\nu(z,\lambda)=\overline{K_\nu(\lambda,z)}$, it suffices to show 
\[|\partial_z K(z,\lambda)|\leq \frac{C}{|1-z\overline{\lambda}|^{\alpha+1}}\]
for a positive constant $C$.
Differentiating the kernel yields
\[
\partial_{z}K_{\nu}(z,\lambda) = \frac{\overline{\lambda}}{(1 - z\overline{\lambda})^2} \int_{0}^{1} \frac{1}{1 - rz\overline{\lambda}}d\nu(r) + \frac{\overline{\lambda}}{1 - z\overline{\lambda}} \int_{0}^{1} \frac{r}{(1 - rz\overline{\lambda})^2}d\nu(r).
\]
Taking absolute values and using $|r|\le1$ and $|\lambda| < 1$, we obtain
\begin{equation}\label{eq: partial K}
\begin{aligned}
|\partial_{z}K_{\nu}(z,\lambda)|
&\leq \frac{|\lambda|}{|1 - z\overline{\lambda}|^2} \int_{0}^{1} \frac{1}{|1 - rz\overline{\lambda}|}d\nu(r) + \frac{|\lambda|}{|1 - z\overline{\lambda}|} \int_{0}^{1} \frac{r}{|1 - rz\overline{\lambda}|^2}d\nu(r) \\
&\leq \frac{1}{|1 - z\overline{\lambda}|^2} \int_{0}^{1} \frac{1}{|1 - rz\overline{\lambda}|}d\nu(r) + \frac{1}{|1 - z\overline{\lambda}|} \int_{0}^{1} \frac{1}{|1 - rz\overline{\lambda}|^2}d\nu(r).
\end{aligned}
\end{equation}
Using Lemma \ref{zw estimate} again and combining the estimate \eqref{eq: partial K}, we obtain
\[
|\partial_{z}K_{\nu}(z,\lambda)| \le \frac{2\nu([0,1])}{|1-z\overline{\lambda}|^3} + \frac{4\nu([0,1])}{|1-z\overline{\lambda}|^3} = \frac{6\nu([0,1])}{|1-z\overline{\lambda}|^3}.
\]
Thus, $K_\nu$ is a $2$-Bergman--CZO.

\noindent\textbf{The case $1 < c_\nu < 2$.} Recall the definition of $c_{\nu}$ \eqref{critical-index}. In this case, we have $\nu(\{1\}) = 0$. Then $\nu([1-t,1]) = \nu([1-t,1))$ for each $0 < t \le 1$. Then
\[
C_{c_\nu,\nu} = \sup_{0 < t \le 1} \frac{\nu([1 - t, 1))}{t^{2 - 2/c_\nu}} = \sup_{0 < t \le 1} \frac{\nu([1 - t, 1])}{t^{2 - 2/c_\nu}}.
\]
By Lemma \ref{zw estimate} we have the pointwise bound
\begin{equation}\label{pointwise_lower_bound}
|1-rz\overline{\lambda}|\ge \max\left\{\frac12|1-z\overline{\lambda}|,1-r\right\}.
\end{equation}
Splitting the integral accordingly, we obtain
\begin{align*}
\int_{0}^{1}\frac{1}{|1-rz\overline{\lambda}|}\,d\nu(r)
&=\int_{0}^{1-\frac12|1-z\overline{\lambda}|}\frac{1}{|1-rz\overline{\lambda}|}\,d\nu(r)
+\int_{1-\frac12|1-z\overline{\lambda}|}^{1}\frac{1}{|1-rz\overline{\lambda}|}\,d\nu(r)\\
&\le \int_{0}^{1-\frac12|1-z\overline{\lambda}|}\frac{1}{1-r}\,d\nu(r)
+\frac{2}{|1-z\overline{\lambda}|}\,\nu\!\left(\left[1-\frac12|1-z\overline{\lambda}|,\,1\right]\right)\\
&=: I_1+I_2.
\end{align*}
Defining $\Phi(u) = \nu([1-u, 1])$ and using the change-of-variable formula with $u = 1 - r$, we have
\[
	I_1 = \int_{\frac{1}{2}|1-z\overline{\lambda}|}^1\frac{1}{u}d\Phi(u).
\]
By the Stieltjes integral formula with $\nu(\{1\})=0$, we obtain
\[
I_1 = \left. \frac{\Phi(u)}{u} \right|_{\frac{1}{2}|1-z\overline{\lambda}|}^1 + \int_{\frac{1}{2}|1-z\overline{\lambda}|}^1 \frac{\Phi(u)}{u^2} du = \nu([0,1]) - \frac{\Phi(\frac{1}{2}|1-z\overline{\lambda}|)}{\frac{1}{2}|1-z\overline{\lambda}|} + \int_{\frac{1}{2}|1-z\overline{\lambda}|}^1 \frac{\Phi(u)}{u^2} du.
\]
The term 
\[
- \frac{\Phi(\frac{1}{2}|1-z\overline{\lambda}|)}{\frac{1}{2}|1-z\overline{\lambda}|} = - \frac{2}{|1-z\overline{\lambda}|} \nu\left(\left[1-\frac{1}{2}|1-z\overline{\lambda}|, 1\right]\right)
\]
 exactly cancels $I_2$. 
By the $2/c_\nu'$-Carleson condition \eqref{2/c-Carleson condition} we have $\Phi(u) \le C_{c_\nu,\nu} u^{2-2/c_\nu}$, in particular, $\nu([0,1]) = \Phi(1) \le C_{c_\nu,\nu}$, we get
\[
	 \int_{0}^{1}\frac{1}{|1-rz\overline{\lambda}|} d\nu(r) \le \nu([0,1]) + C_{c_\nu,\nu} \int_{\frac{1}{2}|1-z\overline{\lambda}|}^1 u^{-2/c_\nu} du.
\]
Substituting the integral result gives
\begin{align*}
  \int_{0}^{1}\frac{1}{|1-rz\overline{\lambda}|} d\nu(r) 
  &\le C_{c_\nu,\nu} + C_{c_\nu,\nu}\frac{c_\nu}{c_\nu-2}\left(1-\left(\frac{1}{2} |1-z\overline{\lambda}|\right)^{1-2/c_\nu}\right)\\
  &\le C_{c_\nu,\nu}\left(\frac{2c_\nu-2}{c_\nu-2} \right) + C_{c_\nu,\nu} \frac{c_\nu}{2-c_\nu} \left(\frac{1}{2}|1-z\overline{\lambda}|\right)^{1-2/c_\nu}.
\end{align*}
Since $1 < c_\nu < 2$, we have $\frac{2c_\nu-2}{c_\nu-2}\leq 0$.
Hence,
\begin{equation}\label{eq: second part of K}
	\int_{0}^{1}\frac{1}{|1-rz\overline{\lambda}|} d\nu(r) \leq   C_{c_\nu,\nu}\frac{c_\nu \,2^{2/c_\nu-1}}{2-c_\nu} \frac{1}{|1-z\overline{\lambda}|^{2/c_\nu-1} }.
\end{equation}
Combining this with the factor $|1-z\overline{\lambda}|^{-1}$, we obtain
\begin{align*}
  |K_\nu(z,\lambda)| \le  C_{c_\nu,\nu} \frac{c_\nu2^{2/c_\nu-1}}{2-c_\nu}\frac{1}{|1-z\overline{\lambda}|^{2/c_\nu} }.
\end{align*}

	For the smooth estimate, by \eqref{eq: partial K} we have
\begin{align*}
  |\partial_{z}K_{\nu}(z,\lambda)| 
  &\le \frac{1}{|1 - z\overline{\lambda}|^2} \int_{0}^{1} \frac{1}{|1 - rz\overline{\lambda}|}d\nu(r) + \frac{1}{|1 - z\overline{\lambda}|} \int_{0}^{1} \frac{1}{|1 - rz\overline{\lambda}|^2}d\nu(r)\\
  &=: J_1 + J_2.
\end{align*}
$J_1$ is bounded by the estimate \eqref{eq: second part of K} as 
\begin{equation}\label{J_1}
J_1 \le C_{c_\nu,\nu} \frac{c_\nu2^{2/c_\nu-1}}{2-c_\nu} \frac{1}{|1-z\overline{\lambda}|^{2/c_\nu + 1} }.
\end{equation}
Recall that $\Phi(u) = \nu([1-u, 1])$.
By the estimate \eqref{pointwise_lower_bound}, we obtain
\begin{align*}
\int_{0}^{1}\frac{1}{|1 - rz\overline{\lambda}|^2}d\nu(r) 
&\leq\int_{0}^{1-\frac12|1-z\overline{\lambda}|}\frac{1}{(1-r)^2} d\nu(r)+\int_{1-\frac12|1-z\overline{\lambda}|}^{1}\frac{4}{|1-z\overline{\lambda}|^2}\,d\nu(r)\\
&\le \int_{\frac{1}{2}|1-z\overline{\lambda}|}^1 \frac{1}{u^2} d\Phi(u) + \frac{4}{|1-z\overline{\lambda}|^2} \Phi\left(\frac{|1-z\overline{\lambda}|}{2}\right).
\end{align*}
Using the Stieltjes integral formula gives
\begin{align*}
\int_{0}^{1}\frac{1}{|1 - rz\overline{\lambda}|^2}d\nu(r) 
&\leq \nu([0,1]) - \frac{\Phi\left(\frac{1}{2}|1-z\overline{\lambda}|\right)}{\left(\frac{1}{2}|1-z\overline{\lambda}|\right)^2} + 2\int_{\frac{1}{2}|1-z\overline{\lambda}|}^1 \frac{\Phi(u)}{u^3} du +  \frac{\Phi\left(\frac{1}{2}|1-z\overline{\lambda}|\right)}{\left(\frac{1}{2}|1-z\overline{\lambda}|\right)^2}\\
&= \nu([0,1]) + 2\int_{\frac{1}{2}|1-z\overline{\lambda}|}^1 \frac{\Phi(u)}{u^3} du.
\end{align*}
Since $\Phi(u) \le C_{c_\nu,\nu} u^{2-2/c_\nu}$, the integral term yields
\[
\int_{0}^{1}\frac{1}{|1 - rz\overline{\lambda}|^2}d\nu(r) \le \nu([0,1]) + 2C_{c_\nu,\nu} \int_{\frac{1}{2}|1-z\overline{\lambda}|}^1 u^{-1-2/c_\nu} du.
\]
Substituting the integral result gives
\begin{align*}
	\int_{0}^{1}\frac{1}{|1 - rz\overline{\lambda}|^2}d\nu(r)
	&\le C_{c_\nu,\nu} - C_{c_\nu,\nu} c_{\nu} \left(1-\left(\frac{1}{2}|1-z\overline{\lambda}|\right)^{-2/c_\nu}\right)  \\
	&\le C_{c_\nu,\nu} (1-c_\nu) + C_{c_\nu,\nu} c_{\nu} \left(\frac{1}{2}|1-z\overline{\lambda}|\right)^{-2/c_\nu}.
\end{align*}
Since $1 - c_\nu < 0$, we have
\begin{equation}\label{eq: part of partial K}
	\int_{0}^{1}\frac{1}{|1 - rz\overline{\lambda}|^2}d\nu(r)\leq 2^{2/c_\nu} c_\nu C_{c_\nu,\nu}\frac{1}{|1-z\overline{\lambda}|^{2/c_\nu}}.
\end{equation}
Multiplying by $|1-z\overline{\lambda}|^{-1}$, then
\begin{equation}\label{J_2}
J_2 \le C_{c_\nu,\nu} c_\nu 2^{2/c_\nu}\frac{1}{|1-z\overline{\lambda}|^{1+2/c_\nu}}.
\end{equation}
Combining the estimates for \eqref{J_1} and \eqref{J_2}, we obtain
\[
|\partial_z K_\nu(z,\lambda)| \le C_{c_\nu,\nu}c_\nu 2^{2/c_\nu} \left( \frac{1}{2(2-c_\nu)} + 1 \right) \frac{1}{|1-z\overline{\lambda}|^{1+2/c_\nu}}.
\]
This confirms that $K_\nu$ is a $2/c_\nu$-Bergman--CZO for $1 < c_\nu < 2$.
\end{proof}

\begin{lemma}\label{u-estimate of K-2}
Let $\nu$ be a finite positive Borel measure on $[0,1]$ with $c_{\nu}=2$.
Suppose that $\nu$ is integrable with respect to the hyperbolic length on $[0,1)$. That is
\[
\int_{[0,1)}\frac{1}{1-r^2}d\nu(r)<\infty.
\]
Then the Shimorin-type operator $T_\nu$ is a $1$-Bergman--Calderón--Zygmund operator.
\end{lemma}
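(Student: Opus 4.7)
The plan is to mimic the structure of the proof of Lemma \ref{u-estimate of K} in the easier ``$c_\nu=1$'' case, but replacing the crude bound $|1-rz\overline{\lambda}|\ge\tfrac12|1-z\overline{\lambda}|$ with the complementary bound $|1-rz\overline{\lambda}|\ge 1-r$ in order to exploit the hyperbolic integrability of $\nu$. The only ingredient needed in addition to the excerpt is the elementary observation that $c_\nu=2$ forces $\nu(\{1\})=0$ (otherwise $c_\nu=1$), so that $\nu$ is supported on $[0,1)$ and no boundary mass must be handled separately.

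First I would record the key finite constant derived from the hypothesis: since $1-r^2=(1-r)(1+r)\le 2(1-r)$ on $[0,1)$, we have
\[
C_\nu := \int_{0}^{1}\frac{d\nu(r)}{1-r} \;\le\; 2\int_{[0,1)}\frac{d\nu(r)}{1-r^2} \;<\;\infty.
\]
Using $|1-rz\overline{\lambda}|\ge 1-r|z\overline{\lambda}|\ge 1-r$ for $r\in[0,1)$ and $z,\lambda\in\mathbb{D}$, this immediately gives the size estimate
\[
\int_{0}^{1}\frac{d\nu(r)}{|1-rz\overline{\lambda}|}\;\le\;C_\nu,
\qquad\text{hence}\qquad
|K_\nu(z,\lambda)|\;\le\;\frac{C_\nu}{|1-z\overline{\lambda}|},
\]
which is condition \eqref{kernel-size} with $\alpha=1$.

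For the smoothness, I would reuse the identity \eqref{eq: partial K} established in the proof of Lemma \ref{u-estimate of K}:
\[
|\partial_{z}K_\nu(z,\lambda)|
\le \frac{1}{|1-z\overline{\lambda}|^{2}}\int_{0}^{1}\frac{d\nu(r)}{|1-rz\overline{\lambda}|}
 + \frac{1}{|1-z\overline{\lambda}|}\int_{0}^{1}\frac{d\nu(r)}{|1-rz\overline{\lambda}|^{2}}.
\]
The first summand is bounded by $C_\nu/|1-z\overline{\lambda}|^{2}$ by the estimate just derived. For the second, the natural trick is to factor one copy of $|1-rz\overline{\lambda}|^{-1}$ out via Lemma \ref{zw estimate} (which gives $|1-rz\overline{\lambda}|^{-1}\le 2|1-z\overline{\lambda}|^{-1}$) and to control the remaining copy by $1-r$ as above:
\[
\int_{0}^{1}\frac{d\nu(r)}{|1-rz\overline{\lambda}|^{2}}
\;\le\;\frac{2}{|1-z\overline{\lambda}|}\int_{0}^{1}\frac{d\nu(r)}{|1-rz\overline{\lambda}|}
\;\le\;\frac{2C_\nu}{|1-z\overline{\lambda}|}.
\]
Combining both summands yields $|\partial_{z}K_\nu(z,\lambda)|\le 3C_\nu/|1-z\overline{\lambda}|^{2}$, and the bound on $|\partial_{\overline{\lambda}}K_\nu(z,\lambda)|$ follows from the conjugate symmetry $K_\nu(z,\lambda)=\overline{K_\nu(\lambda,z)}$. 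Thus \eqref{kernel-smooth} holds with $\alpha=1$ and $T_\nu$ is a $1$-Bergman--Calderón--Zygmund operator.

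I do not expect a serious obstacle: the only subtlety, compared with the proof of Lemma \ref{u-estimate of K}, is that we no longer have a Carleson-type growth condition on $\nu$ near $1$, and a naive splitting of the integrals at $r=1-\tfrac12|1-z\overline{\lambda}|$ (as done there) would generate a tail $\nu([1-\tfrac12|1-z\overline{\lambda}|,1])$ that one cannot control. The point is that at the borderline value $\alpha=1$ (equivalently $c_\nu=2$), such a splitting is not necessary at all: the uniform bound $|1-rz\overline{\lambda}|\ge 1-r$ is already strong enough once we know $(1-r)^{-1}\in L^1(d\nu)$, which is exactly the hyperbolic integrability hypothesis.
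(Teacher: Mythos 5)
Your proposal is correct, and it is genuinely more direct than the paper's argument. The paper first derives from the hyperbolic integrability the linear Carleson bound $C_{2,\nu}=\sup_{0<t\le1}\nu([1-t,1])/t\le C$, then splits each integral at $r=1-\tfrac12|1-z\overline{\lambda}|$ (reusing the Stieltjes-integration machinery of Lemma \ref{u-estimate of K}) to obtain $\int_0^1|1-rz\overline{\lambda}|^{-1}\,d\nu(r)\le C+C_{2,\nu}$ and $\int_0^1|1-rz\overline{\lambda}|^{-2}\,d\nu(r)\le 4C_{2,\nu}|1-z\overline{\lambda}|^{-1}$, which gives the same size and smoothness bounds you reach. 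You instead use the uniform pointwise bound $|1-rz\overline{\lambda}|\ge 1-r$ for the size estimate and, for the quadratic term, factor out one power via Lemma \ref{zw estimate} before applying the same uniform bound; this avoids any splitting, any Stieltjes integration, and any intermediate Carleson constant, at the cost of nothing (your constant $3C_\nu$ is comparable to the paper's $C+5C_{2,\nu}$). The paper's route has the organizational advantage of running parallel to the $1<c_\nu<2$ case so the two lemmas share estimates, but your shortcut is perfectly rigorous. One small remark on your closing comment: the worry that a splitting at $r=1-\tfrac12|1-z\overline{\lambda}|$ would produce an uncontrollable tail is unfounded, since hyperbolic integrability forces $\nu([1-t,1])\le Ct$, which is exactly how the paper tames that tail; but this does not affect the validity of your proof.
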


\begin{proof}
Since $c_{\nu}=2$, we have $\nu(\{1\})=0$.
Note that the hyperbolic integrability of $\nu$ on $[0,1)$ is equivalent (up to absolute constants) to 
\begin{equation*}
C :=\int_0^1\frac{1}{1-r}d\nu(r)<\infty.
\end{equation*}
Hence, for $0 < t \le 1$, we have
\[
\frac{\nu([1 - t, 1])}{t} = \int_{1 - t}^1 \frac{1}{t}\,d\nu(r) \le \int_{1 - t}^1 \frac{1}{1 - r}\,d\nu(r) < C.
\]
Consequently, we obtain
\[
C_{2,\nu}:= \sup_{0 < t \le 1}\frac{\nu([1-t,1))}{t} =  \sup_{0 < t \le 1}\frac{\nu([1-t,1])}{t} < C.
\]
By the estimate \eqref{pointwise_lower_bound}, we estimate the integral as follows:
\begin{equation}\label{1estimate}
\begin{aligned}
\int_{0}^{1} \frac{1}{|1 - rz\overline{\lambda}|}\,d\nu(r) 
&\le \int_{0}^{1 - \frac{1}{2}|1 - z\overline{\lambda}|} \frac{1}{1 - r}\,d\nu(r) + \int_{1 - \frac{1}{2}|1 - z\overline{\lambda}|}^1 \frac{2}{|1 - z\overline{\lambda}|}\,d\nu(r)\\
&\le \int_{0}^{1} \frac{1}{1 - r}\,d\nu(r) + \frac{2}{|1 - z\overline{\lambda}|}\nu\!\left(\left[1-\frac{1}{2}|1-z\overline{\lambda}|,\,1\right]\right)\\
&\le C + \frac{2}{|1 - z\overline{\lambda}|}\,C_{2,\nu}\,\frac{1}{2}|1-z\overline{\lambda}|\\
&\le C + C_{2,\nu}.
\end{aligned}
\end{equation}
Simplifying the right-hand side, we get
\begin{align*}
|K_{\nu}(z,\lambda)|
&= \left|\frac{1}{1-z\overline{\lambda}}\int_{0}^{1}\frac{1}{1-rz\overline{\lambda}}\,d\nu(r)\right|\\
&\le \frac{1}{|1-z\overline{\lambda}|}\int_{0}^{1}\frac{1}{|1-rz\overline{\lambda}|}\,d\nu(r)
\le (C + C_{2,\nu})\,\frac{1}{|1-z\overline{\lambda}|}.
\end{align*}

Next, we verify the smoothness condition. By \eqref{eq: partial K},
\[
|\partial_{z}K_{\nu}(z,\lambda)| \le \frac{1}{|1 - z\overline{\lambda}|^2} \int_{0}^{1} \frac{1}{|1 - rz\overline{\lambda}|}\,d\nu(r) + \frac{1}{|1 - z\overline{\lambda}|} \int_{0}^{1} \frac{1}{|1 - rz\overline{\lambda}|^2}\,d\nu(r).
\]
From \eqref{1estimate}, we know
\[
\int_{0}^{1} \frac{1}{|1 - rz\overline{\lambda}|}\,d\nu(r) \le C + C_{2,\nu}.
\]
We now estimate the second integral by using \eqref{eq: part of partial K}:
\begin{align*}
\int_{0}^{1} \frac{1}{|1 - rz\overline{\lambda}|^2}\,d\nu(r) 
\leq 4 C_{2,\nu}\frac{1}{|1-z\overline{\lambda}|}.
\end{align*}
Combining these results, we find
\[
|\partial_{z}K_{\nu}(z,\lambda)| \le  (C + 5C_{2,\nu})\,\frac{1}{|1-z\overline{\lambda}|^{2}}.
\]
This shows that $K_{\nu}$ is a $1$-Bergman--Calderón--Zygmund kernel when $c_{\nu} = 2$.
\end{proof}

\section{The proof of Theorem \ref{pq-necessary}}
For the proof, we first collect several classical results on integral operators on the unit disk as follows.
\begin{lemma}[Proposition 5.2, \cite{T.Tao}]\label{Tao}
  Let $1\le q\le \infty$. Suppose $k:\mathbb{D}\times\mathbb{D}\to\mathbb{C}$ is
measurable and that $\|k(z,\cdot)\|_{L^{q}(\mathbb{D})}$ is uniformly bounded
for almost every $z\in\mathbb{D}$. Define the integral operator
\[
(Tf)(\lambda)=\int_{\mathbb{D}} k(z,\lambda)\,f(z)\,dA(z), \quad \lambda\in\mathbb{D}.
\]
Then $T:L^{1}(\mathbb{D})\to L^{q}(\mathbb{D})$ is bounded and
\[
\|T\|_{L^{1}(\mathbb{D})\to L^{q}(\mathbb{D})}
= \sup_{z\in\mathbb{D}} \|k(z,\cdot)\|_{L^{q}(\mathbb{D})}.
\]
Conversely, if $T:L^{1}(\mathbb{D})\to L^{q}(\mathbb{D})$ is bounded, then
$\|k(z,\cdot)\|_{L^{q}(\mathbb{D})}$ is uniformly bounded for almost every
$z\in\mathbb{D}$.
\end{lemma}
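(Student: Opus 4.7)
The statement splits naturally into a forward direction (kernel bound $\Rightarrow$ $L^{1}\to L^{q}$ boundedness with the displayed norm inequality) and a converse direction ($L^{1}\to L^{q}$ boundedness $\Rightarrow$ uniform a.e.\ control of $\|k(z,\cdot)\|_{L^q}$). The plan is to handle them in that order, deriving the norm identity as a consequence of matching one-sided bounds.

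For the forward direction, I would apply Minkowski's integral inequality in the $L^{q}(\mathbb{D})$ variable. Writing
\[
Tf(\lambda)=\int_{\mathbb{D}} k(z,\lambda)f(z)\,dA(z),
\]
Minkowski's inequality (valid for $1\le q\le \infty$, with the $q=\infty$ case reducing to a pointwise triangle inequality) gives
\[
\|Tf\|_{L^{q}(\mathbb{D})}\le \int_{\mathbb{D}} \|k(z,\cdot)\|_{L^{q}(\mathbb{D})}\,|f(z)|\,dA(z) \le \Bigl(\sup_{z\in\mathbb{D}}\|k(z,\cdot)\|_{L^{q}(\mathbb{D})}\Bigr)\|f\|_{L^{1}(\mathbb{D})}.
\]
This yields both the boundedness and the inequality
\(
\|T\|_{L^{1}\to L^{q}}\le \sup_{z}\|k(z,\cdot)\|_{L^{q}}.
\)

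For the converse and the matching lower bound on the operator norm, I would use approximate identities. For a fixed $z_{0}\in\mathbb{D}$ and $r>0$ small enough that $B(z_{0},r)\subset\mathbb{D}$, set
\[
f_{r}(z)=\frac{1}{|B(z_{0},r)|}\,\ch_{B(z_{0},r)}(z),
\]
so that $\|f_{r}\|_{L^{1}(\mathbb{D})}=1$ and
\[
Tf_{r}(\lambda)=\frac{1}{|B(z_{0},r)|}\int_{B(z_{0},r)} k(z,\lambda)\,dA(z).
\]
By the Lebesgue differentiation theorem applied in the $z$-variable, for almost every $z_{0}\in\mathbb{D}$ and for each fixed $\lambda$ outside a null set, $Tf_{r}(\lambda)\to k(z_{0},\lambda)$ as $r\to 0^{+}$. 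I would then invoke Fatou's lemma in the $\lambda$-variable (or the analogous statement for the essential supremum when $q=\infty$) to obtain
\[
\|k(z_{0},\cdot)\|_{L^{q}(\mathbb{D})}\le \liminf_{r\to 0^{+}} \|Tf_{r}\|_{L^{q}(\mathbb{D})}\le \|T\|_{L^{1}\to L^{q}},
\]
for a.e.\ $z_{0}\in\mathbb{D}$. This simultaneously proves the converse (the left-hand side is uniformly bounded a.e.\ when $T$ is bounded) and the reverse of the previous norm inequality, completing the identity $\|T\|_{L^{1}\to L^{q}}=\sup_{z}\|k(z,\cdot)\|_{L^{q}}$.

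The main technical obstacle lies in justifying the pointwise passage to the limit $Tf_{r}(\lambda)\to k(z_{0},\lambda)$ simultaneously for a.e.\ $(z_{0},\lambda)$, which is the step where the a.e.\ qualifier in the conclusion originates. To make this rigorous, I would appeal to the joint measurability of $k$ (so that Fubini gives full-measure slices in each variable) and to a Lebesgue-point argument applied to $z\mapsto k(z,\lambda)$ for each $\lambda$ in a set of full measure. The $q=\infty$ endpoint needs a separate but parallel treatment, replacing Fatou's lemma by the lower semicontinuity of the essential supremum under pointwise a.e.\ convergence. Once these measure-theoretic issues are addressed, the two chains of inequalities above combine to yield both assertions of the lemma.
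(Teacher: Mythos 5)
The paper does not prove this lemma at all: it is quoted verbatim as Proposition 5.2 of Tao's lecture notes and used as a black box, so there is no internal proof to compare against. Your argument is the standard proof of that cited result and is essentially correct: Minkowski's integral inequality gives $\|T\|_{L^1\to L^q}\le \sup_z\|k(z,\cdot)\|_{L^q}$, and testing on normalized indicators of shrinking balls, combined with Lebesgue differentiation and Fatou (or a.e.\ lower semicontinuity of the essential supremum when $q=\infty$), gives the reverse bound and the converse. Two small points are worth tightening beyond the quantifier-order issue you already flag (which is indeed handled by joint measurability of the averaging functions over a countable set of radii plus Fubini). First, the Lebesgue differentiation step needs $z\mapsto k(z,\lambda)$ to be locally integrable for a.e.\ $\lambda$; this is not a hypothesis in the converse direction, but it follows from the meaning of ``$T:L^1\to L^q$ is bounded'': applying $T$ to indicators of a countable family of balls covering $\mathbb{D}$ forces $\int_B|k(z,\lambda)|\,dA(z)<\infty$ for a.e.\ $\lambda$ and every such ball, hence $k(\cdot,\lambda)\in L^1_{\mathrm{loc}}$ for a.e.\ $\lambda$. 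Second, your argument (like the statement's hypothesis ``uniformly bounded for almost every $z$'') really produces the essential supremum of $z\mapsto\|k(z,\cdot)\|_{L^q(\mathbb{D})}$ rather than a genuine pointwise supremum, so the displayed norm identity should be read in that sense; with that reading the two one-sided inequalities you derive do combine to give equality. With these routine adjustments the proposal is a complete proof, obtained in the same way one would prove the result in the cited source.
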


\begin{lemma}[Proposition 6.1 \cite{T.Tao}]\label{Taoestimate}
  Let $1<r<\infty$, $1<p<q<\infty$ with
\[
  \frac1p+\frac1r=\frac1q+1.
\]
Suppose that $k:\mathbb D\times\mathbb D\to\mathbb C$ is measurable such that
\[
  \|k(\cdot,\lambda)\|_{L^{r,\infty}(\mathbb D)} \le C
  \quad\text{for almost every } \lambda\in\mathbb D
\]
and
\[
  \|k(z,\cdot)\|_{L^{r,\infty}(\mathbb D)} \le C
  \quad\text{for almost every } z\in\mathbb D
\]
for some $C>0$. Then the operator $T$ defined as
\[
  Tf(\lambda)=\int_{\mathbb D} k(z,\lambda)f(z)\,dA(z)
\]
is bounded from $L^p(\mathbb D)$ to $L^q(\mathbb D)$.
\end{lemma}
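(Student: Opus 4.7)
The plan is to prove Lemma \ref{Taoestimate} by real interpolation between two endpoint estimates, since the statement is a weak Young-type inequality for integral operators with kernels having dual weak-$L^{r}$ control.

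First, I would establish the endpoint $T:L^{r',1}(\mathbb{D})\to L^{\infty}(\mathbb{D})$. Fixing $\lambda\in\mathbb{D}$ and applying H\"older's inequality in Lorentz spaces together with the hypothesis $\|k(\cdot,\lambda)\|_{L^{r,\infty}(\mathbb{D})}\le C$ gives the pointwise bound
\[
|Tf(\lambda)|\le \|k(\cdot,\lambda)\|_{L^{r,\infty}(\mathbb{D})}\,\|f\|_{L^{r',1}(\mathbb{D})}\le C\|f\|_{L^{r',1}(\mathbb{D})},
\]
and taking the supremum in $\lambda$ yields the claim. Second, I would establish $T:L^{1}(\mathbb{D})\to L^{r,\infty}(\mathbb{D})$. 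For $1<r<\infty$, the quasi-norm on $L^{r,\infty}(\mathbb{D})$ is equivalent to a genuine norm, so Minkowski's integral inequality applies and the second hypothesis $\|k(z,\cdot)\|_{L^{r,\infty}(\mathbb{D})}\le C$ yields
\[
\|Tf\|_{L^{r,\infty}(\mathbb{D})}\le \int_{\mathbb{D}}|f(z)|\,\|k(z,\cdot)\|_{L^{r,\infty}(\mathbb{D})}\,dA(z)\le C\|f\|_{L^{1}(\mathbb{D})}.
\]
This pair of estimates is essentially a symmetric consequence of the two kernel hypotheses, paralleling the two sides of Lemma \ref{Tao}.

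With the endpoints in hand, I would invoke the Marcinkiewicz real interpolation theorem with $(p_{0},q_{0})=(r',\infty)$ (in its Lorentz form $L^{r',1}\to L^{\infty}$) and $(p_{1},q_{1})=(1,r)$. A direct calculation shows that for $\theta\in(0,1)$ the interpolated indices $1/p=(1-\theta)/r'+\theta$ and $1/q=\theta/r$ satisfy the Young relation $1/p+1/r=1/q+1$, with $p\le q$ (because $1/p-1/q=1/r'\ge 0$) and $q_{0}\ne q_{1}$. The Marcinkiewicz theorem therefore upgrades the endpoint bounds to strong-type $(p,q)$ boundedness for every interior pair, which is exactly the conclusion. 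The main subtlety—and the step most worth flagging—is that at the $(r',\infty)$ endpoint we only control $L^{r',1}\to L^{\infty}$ rather than $L^{r'}\to L^{\infty}$ (the latter would already fail for convolution with $|z|^{-2/r}$), so one must use the version of Marcinkiewicz formulated for restricted weak-type or Lorentz-space endpoints. As an alternative route avoiding this machinery, one can layer-cake the kernel as $k=k\mathbbm{1}_{|k|\le A(s)}+k\mathbbm{1}_{|k|>A(s)}$ for each level $s>0$, choose $A(s)$ so that the truncated piece $T_{k\mathbbm{1}_{|k|\le A(s)}}f$ is uniformly bounded by $s/2$ via H\"older (using that $p'>r$, which follows from $p<r'$, so the $L^{p'}$-norm of the truncation is finite and controlled by $A(s)^{1-r/p'}$), and bound the super-level set of the unbounded piece by a Chebyshev moment estimate.
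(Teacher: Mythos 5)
Your main argument is correct. Note that the paper itself gives no proof of this lemma --- it is quoted verbatim from Tao's lecture notes (Proposition 6.1 there) --- so the only meaningful check is against the standard argument, which is exactly what you reproduce: the two kernel hypotheses give the endpoint bounds $T\colon L^{r',1}(\mathbb D)\to L^\infty(\mathbb D)$ (Lorentz--H\"older duality of $L^{r,\infty}$ with $L^{r',1}$) and $T\colon L^1(\mathbb D)\to L^{r,\infty}(\mathbb D)$ (Minkowski's integral inequality, legitimate because for $r>1$ the weak-$L^r$ quasi-norm is equivalent to a genuine norm), and the off-diagonal Marcinkiewicz theorem with Lorentz/restricted-type endpoints then yields strong $(p,q)$ boundedness; your index computation $1/p-1/q=1/r'$, the verification that $p<q$, $q_0\neq q_1$, and the observation that the range $\theta\in(0,1)$ sweeps out exactly $1<p<r'$ are all right, and you correctly flag the essential subtlety that the strong endpoint $L^{r'}\to L^\infty$ is false, so the restricted/Lorentz form of the interpolation theorem is genuinely needed. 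One small caveat: the ``alternative route'' you sketch at the end (splitting $k=k\mathbbm{1}_{|k|\le A(s)}+k\mathbbm{1}_{|k|>A(s)}$ and using Chebyshev on the high part) as described only produces a weak-type $(p,q)$ estimate at each exponent on the Young line; to reach the strong-type conclusion claimed in the lemma you would still have to interpolate between two such weak-type bounds at nearby exponents, so it does not fully avoid interpolation machinery. Since that sketch is offered only as a secondary remark, it does not affect the validity of your primary proof.
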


\subsection{Proof of sufficiency}\label{S:suf1}
We prove the sufficiency part of Theorem \ref{pq-necessary}.

\subsubsection*{\bf The case $c_{\nu} = 1$.} 
Since $c_{\nu} = 1$, we have $c_{\nu}' = \infty.$
Recall Figure \ref{theorem 1 graph}. It suffices to prove that if $(p,q)$ satisfies one of the following conditions:
\begin{itemize}
	\item [(b)] $1 < p <\infty$, $1/q > 1/p$;
	\item [(c)] $p = \infty$, $1\leq q < \infty$,
\end{itemize}
then $T_{\nu} : L^p(\D) \to L^q(\D)$ is bounded.
Recall the universal estimates \eqref{universal_estimates}, 
\[
|K_{\nu}(z,\lambda)| \le \frac{2\nu([0,1])}{|1 - z\overline{\lambda}|^2} \quad z, \lambda \in \mathbb{D}.
\]
Define the positive operator $P_+$ by
\[
P_+ f(z)=\int_{\D}\frac{f(\lambda)}{|1-z\overline{\lambda}|^{2}}\,dA(\lambda),\quad f\in L^1(\mathbb{D}).
\]
Then, $P_+ : L^p(\mathbb{D}) \to L^q(\mathbb{D})$ is bounded implies the boundedness of $T_{\nu}: L^p(\mathbb{D}) \to L^q(\mathbb{D})$ for some $(p,q) \in [1,\infty]\times [1,\infty]$. 

\noindent\textbf{The case $1 < p <\infty$, $1/q > 1/p$.}
By Theorem 3.11 in \cite{Zhu14}, we have $P_+ : L^p(\mathbb{D}) \to L^p(\mathbb{D})$ is bounded for all $1 < p < \infty$, then $T_{\nu}: L^p(\mathbb{D}) \to L^p(\mathbb{D})$ is bounded for all $1 < p < \infty$. And for all $1 \le q < \infty$ such that 
\[
\frac{1}{q} > \frac{1}{p},
\]
we have for all $f \in L^p(\mathbb{D})$, 
\[\|T_{\nu} f\|_{L^q(\mathbb{D})} \le \|T_{\nu} f\|_{L^p(\mathbb{D})} \le \|T_{\nu}\|_{L^p(\mathbb{D}) \to L^p(\mathbb{D})} \|f\|_{L^p(\mathbb{D})}.\]
Hence, $T_{\nu}: L^p(\mathbb{D}) \to L^q(\mathbb{D})$ is bounded.

\noindent\textbf{The case $p = \infty$, $1\leq q < \infty$.} Choose $q_0$ such that $1\leq q <q_0< \infty$. Hence
\[
\|T_{\nu}f\|_{L^q(\mathbb{D})} \le \|T_{\nu}f\|_{L^{q_{0}}(\mathbb{D})}\le  \|T_{\nu}\|_{L^{q_{0}}(\mathbb{D})\to L^{q_{0}}(\mathbb{D})}  \|f\|_{L^{q_{0}}(\mathbb{D})}\le\|T_{\nu}\|_{L^{q_{0}}(\mathbb{D})\to L^{q_{0}}(\mathbb{D})} \|f\|_{L^{\infty}(\mathbb{D})}.
\]
Hence, $T_{\nu}: L^{\infty}(\mathbb{D}) \to L^q(\mathbb{D})$ is bounded for all $1 \le q < \infty.$
\medskip

\subsubsection*{\bf The case $1 < c_{\nu} \le 2$.} 
Let $\nu$ be a finite positive Borel measure on $[0,1]$. Recall the quantity $c_\nu$ defined by \eqref{critical-index}  
\begin{align*}
  c_\nu 
  &= \sup\left\{1 \le c < 2: \int_{0}^{1} \frac{1}{(1 - r^2)^{2/c'}} \,d\nu(r) < \infty\right\}.
\end{align*}
Then, $1 < c_\nu \le 2$ implies that $\nu(\{1\}) = 0$.

\noindent\textbf{The case $p=1,1\le q<c_\nu$.} 
We shall show that $T_{\nu} : L^p(\D) \to L^q(\D)$ is bounded.
Choose $q_0$ such that {$1\le q<q_0<c_\nu\le 2$}.
Hence, by Corollary \ref{Kp-norm}, we have
\[
\sup_{z\in\mathbb{D}}\|K_{\nu}(z,\cdot)\|_{L^{q_0}(\mathbb{D})}\leq \frac{2}{2 - q_0} \int_0^1 (1 - r)^{2/q_0 - 2} d\nu(r)<\infty.
\]
By Lemma \ref{Tao}, we immediately obtain that $T_{\nu} : L^1(\D) \to L^{q_0}(\D)$ is bounded.
Thus
\[\|T_\nu f\|_{L^q(\mathbb{D})}\leq \|T_\nu f\|_{L^{q_0}(\mathbb{D})}\leq  \|T_\nu \|_{L^1(\D) \to L^{q_0}(\D)}
\|f\|_{L^1(\D)},\]
then $T_\nu:L^1(\mathbb{D})\to L^q(\mathbb{D})$ is bounded for each $1\le q<c_\nu$.

\noindent\textbf{The case $1<p<c_\nu',1/q > 1/p + 1/c_{\nu} - 1.$}\label{S:1cv}
Since \[\frac{1}{q} > \frac{1}{p} + \frac{1}{c_{\nu}} - 1,\] then there exists $r$ such that $1<r < c_{\nu}<2$ such that \[\frac{1}{p} + \frac{1}{r} = 1 + \frac{1}{q}.\] 
By the upper bound in Corollary \ref{Kp-norm} and the definition of $c_\nu$, we have
\[
	\sup_{z\in \mathbb{D}} \|K_{\nu}(z,\cdot)\|_{L^r(\mathbb{D})} \le \frac{2}{2 - r} \int_0^1 (1 - t)^{2/r - 2} d\nu(t)<\infty.
\]
Since $\|K_{\nu}(z,\cdot)\|_{L^{r,\infty}(\mathbb D)}\leq \|K_{\nu}(z,\cdot)\|_{L^r(\mathbb{D})}$ and $K_{\nu}(z,\lambda)=\overline{K_{\nu}(\lambda,z)}$, by Lemma \ref{Taoestimate}, we see that $T_{\nu} : L^p(\D) \to L^q(\D)$ is bounded.

\noindent\textbf{The case $p =c_{\nu}',1\le q< \infty.$} The arguments in the above case can also apply the current situation. 

\noindent\textbf{The case $p > c_{\nu}',1\le q\le \infty.$}\label{S:pbig1} 
Recall that the dual operator of $T_\nu$ takes the form
\[
T_{\nu}^* g(z) = \int_{\mathbb{D}} K_{\nu}(\lambda,z)g(\lambda)\,dA(\lambda).
\]
Since $K_{\nu}(z,\lambda)=\overline{K_{\nu}(\lambda,z)}$, by Lemma \ref{Tao}, the boundedness of $T_{\nu}: L^1(\mathbb{D}) \to L^q(\mathbb{D})$ is equivalent to that of $T_{\nu}^*: L^1(\mathbb{D}) \to L^q(\mathbb{D})$, and hence to that of $T_{\nu}: L^{q'}(\mathbb{D}) \to L^{\infty}(\mathbb{D})$.
By the conclusion in the case $p=1$, we thus obtain that for $p > c_{\nu}'$ and $1 \le q \le \infty$, the operator $T_{\nu}: L^p(\mathbb{D}) \to L^{q}(\mathbb{D})$ is bounded.

\subsection{Proof of necessity}\label{S:necethm1}
Let $\nu$ be a positive finite measure on $[0,1]$. 
It suffices to prove that if $(p,q)$ satisfies one of the following conditions:
\begin{itemize}
	\item [(a)] $p=1$, $c_\nu<q\le \infty$;
	\item [(b)] $1<p<c_\nu'$, $q=\infty$;
	\item [(c)] $1<p<c_\nu'$, $1/q < 1/p + 1/c_{\nu} - 1$,
\end{itemize}
then $T_{\nu} : L^p(\D) \to L^q(\D)$ is unbounded.

\subsubsection*{\bf The case $p=1$, $c_{\nu} < q \le \infty$}
We proceed by contradiction. Suppose there exists $q > c_{\nu}$ such that $T_{\nu} : L^1(\D) \to L^q(\D)$ is bounded,
where $c_\nu$ is defined by \eqref{critical-index}
\[
c_\nu = \sup\left\{1 \le c < 2: \int_{0}^{1} (1 - r)^{2/c - 2} \,d\nu(r) < \infty\right\}.
\]
If $c_\nu=2$, then $\nu(\{1\}) = 0$. Hence, by Proposition~\ref{pnorm of K}, for each $q > 2 = c_{\nu}$, we have 
\[
\|K_{\nu}(z,\cdot)\|_{L^q(\D)}
\ge (1-|z|^2)^{2/q-1}\int_{0}^{1}\frac{1}{1-r|z|^2}\,d\nu(r)
\ge \nu([0,1]) (1-|z|^2)^{2/q-1}.
\]
Since $2/q-1<0$, letting $|z|\to 1^{-}$ gives
$\sup_{z\in\D}\|K_{\nu}(z,\cdot)\|_{L^q(\D)}=\infty$,
which contradicts the boundedness of $T_\nu:L^1(\D)\to L^q(\D)$.

If $1 \le c_{\nu} < 2$ and $\nu(\{1\}) = 0$, it suffices to prove that 
$T_\nu:L^1(\D)\to L^q(\D)$ is unbounded for each $q$ with $c_\nu<q<2$.
For simplicity, we still denote this exponent by $q$.
Consider the dyadic intervals $$I_j = [1 - 2^{-j}, 1 - 2^{-j-1})$$ for $j = 0, 1, 2, \cdots$. We first show there exists a constant $C > 0$ (independent of $j$) such that
\begin{equation}\label{nuestimate}
  \nu(I_j) \le C \cdot 2^{\left(2/q - 2\right)j}.
\end{equation}
To verify this, fix $z_j = (1 - 2^{-j})^{1/2}$. From the lower bound in Proposition \ref{pnorm of K}, we have
\[
\|K_{\nu}(z_j,\cdot)\|_{L^q(\D)} \geq (1 - |z_j|^2)^{2/q - 1} \int_0^1 \frac{d\nu(r)}{1 - r|z_j|^2}.
\] 
Note that $$|z_j|^2 = 1 - 2^{-j},$$ so we have
\begin{align*}
  \int_0^1 \frac{d\nu(r)}{1 - r|z_j|^2} 
  &\ge \int_{I_j }\frac{d\nu(r)}{1 - r|z_j|^2} = \int_{I_j } \frac{d\nu(r)}{1 - r\left(1 - 2^{-j}\right)} \\
  &\ge \int_{I_j} \frac{d\nu(r)}{1 - r + 2^{-j}}  \ge \frac{\nu(I_j)}{2^{-j} + 2^{-j}} = \frac{1}{2}\cdot 2^{j} \nu(I_j).
\end{align*}
Since $1 - |z_j|^2 = 1 - (1 - 2^{-j}) = 2^{-j}$, we obtain
\[
\|K_{\nu}(z_j,\cdot)\|_{L^q(\D)} \ge \frac{1}{2} \cdot (1 - |z_j|^2)^{2/q - 1} \cdot 2^{j} \nu(I_j) \ge \frac{1}{2}\cdot 2^{\left(2 - 2/q\right)j} \nu(I_j).
\]
Since $T_\nu : L^1(\D) \to L^q(\D)$ is bounded, Lemma \ref{Tao} implies $\|K_{\nu}(z_j,\cdot)\|_{L^q(\D)}$ is uniformly bounded for all $j$. Thus, there exists a constant $C > 0$ (independent of $j$) such that
\[
\nu(I_j) \le C \cdot 2^{\left(2/q - 2\right)j},
\]
which confirms \eqref{nuestimate}.

Next, fix $\varepsilon > 0$ sufficiently small such that \[q_{\varepsilon} = c_{\nu} + \varepsilon < q.\] Since $q_{\varepsilon} < q < 2$ and $T_\nu : L^1(\D) \to L^q(\D)$ is bounded, we compute
\begin{align*}
\int_0^1 (1 - r)^{2/q_{\varepsilon} - 2} d\nu(r)
&= \sum_{j=0}^{\infty} \int_{I_j} (1 - r)^{2/q_{\varepsilon} - 2} d\nu(r)  \le 2^{2 - 2/q_{\varepsilon}} \sum_{j=0}^{\infty} (2^{-j})^{2/q_{\varepsilon} - 2} \nu(I_j) \\
&\le  2^{2 - 2/q_{\varepsilon}}C \sum_{j=0}^{\infty} 2^{-j \cdot 2\left(1/q_{\varepsilon} - 1/q\right)}.
\end{align*}
The series converges because $1/q_{\varepsilon} - 1/q > 0$ (since $q_{\varepsilon} < q$), so
\[
\int_0^1 (1 - r)^{2/q_{\varepsilon} - 2} d\nu(r) < \infty.
\]
This contradicts the definition of $c_{\nu}$, as $q_{\varepsilon} > c_{\nu}$ (by construction) but the integral above is finite. 
Therefore, our initial assumption is false, and $T_{\nu}:L^{1}(\D)\to L^{q}(\D)$ is unbounded for every $q$ with $c_{\nu}<q<2$.

If $\nu(\{1\}) \ne 0$, by Proposition \ref{K estimates with nu1 neq 0}, we have
\[
|K_{\nu}(z,\lambda)| \ge \frac{\nu(\{1\})}{2} \frac{1}{|1 - z\overline{\lambda}|^2},
\]
which implies that for any $q > 1 = c_{\nu}$, by \cite[Lemma 3.10]{Zhu14}, 
\[
\sup_{z \in \mathbb{D}} \|K_{\nu}(z,\cdot)\|_{L^q(\mathbb{D})}^q \ge \frac{\nu(\{1\})^q}{2^q} \sup_{z \in \mathbb{D}} \int_{\mathbb{D}} \frac{1}{|1 - z\overline{\lambda}|^{2q}}dA(\lambda) = \infty.
\]
Then, $T_{\nu}: L^1(\mathbb{D}) \to L^q(\mathbb{D})$ is unbounded for all $1 < q < \infty$.

Only the case $p=1,q=\infty$ remains to be discussed.
If $T_{\nu}:L^{1}(\D)\to L^{\infty}(\D)$ were bounded, we would obtain $T_{\nu}:L^{1}(\D)\to L^{q}(\D)$ is  bounded for some $q\in(c_\nu,2)$, which contradicts the previous conclusion. Thus $T_{\nu}:L^{1}(\D)\to L^{q}(\D)$ is unbounded for all $q$ with $c_{\nu}<q\le \infty$. 
This completes the proof.

\subsubsection*{\bf The case $1 < p < c_\nu'$, $q = \infty$}
Since $K_\nu(z,w)=\overline{K_\nu(w,z)}$, we have $T_\nu=T_\nu^*$. By duality and the same arguments as Subsection \ref{S:pbig1}, we have if $1<p<c_\nu'$, $q=\infty$, then  $T_{\nu} : L^p(\mathbb{D}) \to L^{\infty}(\mathbb{D})$ is unbounded.

\subsubsection*{\bf The case $1<p<c_\nu'$, $1/q < 1/p + 1/c_{\nu} - 1$}\label{constract-cm}
In this setting, the argument splits into three cases according to the value of $c_\nu$ as follows:
\begin{itemize}
	\item [(a)] $c_\nu=1$;
	\item [(b)] $c_\nu=2$;
	\item [(c)] $1<c_\nu<2$.
\end{itemize}
When $c_\nu=1$ or $c_\nu=2$, we instead construct localized test functions supported in boundary boxes $E_t$ as in Lemma \ref{l-estimate of real part of K} and combine them with lower bounds for the real part of the Shimorin-type kernel to obtain the desired contradiction.
When $1<c_\nu<2$, we employ a multiplier method and a coefficient criterion for Bergman $L^p$-regularity of analytic functions.

\noindent\textbf{The case $c_\nu=1$.}
We aim to show that $T_{\nu} : L^p(\mathbb{D}) \to L^q(\mathbb{D})$ is unbounded with 
\[ 1<p<\infty\quad \text{and} \quad\frac{1}{q} < \frac{1}{p}.\] 
Recall the definition of $c_\nu$ by \eqref{critical-index}, we have
\begin{equation}\label{def_c_nu}
\int_{0}^{1}(1 - r)^{2/c - 2} d\nu(r)= \infty, \quad c > 1.
\end{equation}

\begin{lemma}\label{l-estimate of real part of K}
Let $t\in(0,1)$ and define the set
$$
E_t := \left \{\rho e^{i\theta} \in \mathbb{D}: 1 - t \le \rho \le 1 - \frac{t}{2},\ |\theta| \le \frac{t}{20} \right \}.
$$
There exist absolute constants $c_0 > 0$ and $t_0 \in (0,1)$(we can take $c_0 = \sqrt{3}/36$ and $t_0 =1/2$) such that for all $0 < t < t_0$ and all $z, \lambda \in E_t$, the following assertions hold:
\begin{itemize}
    \item[(1)] For every $r \in [0,1]$,
    \[
    \operatorname{Re}\left(\frac{1}{(1 - z \overline{\lambda})(1 - r z\overline{\lambda})} \right) \ge \frac{c_0}{t\left((1 - r) + t\right)}.
    \]

    \item[(2)] In particular, for every $ r \in [1 - t,1]$,
    \[
    \operatorname{Re}\left(\frac{1}{(1 - z\overline{\lambda})(1 - r z\overline{\lambda})} \right) \ge \frac{c_0}{2t^{2}}.
    \]
\end{itemize}
\end{lemma}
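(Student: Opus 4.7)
The plan is to parametrize $E_t$ in polar coordinates, reduce everything to the single complex number $w := z\overline{\lambda}$, and exploit the algebraic decomposition $1 - rw = (1-r) + r(1-w)$ to separate the ``safe'' direction in which the product $(1-w)(1-rw)$ points.

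Write $z = \rho_1 e^{i\theta_1}$, $\lambda = \rho_2 e^{i\theta_2}$ with $\rho_j \in [1-t,\, 1-t/2]$ and $|\theta_j| \le t/20$, and set $w = Re^{i\phi}$ with $R = \rho_1\rho_2$ and $\phi = \theta_1 - \theta_2$. A direct computation yields the elementary bounds
\[
\tfrac{t}{2} \le 1 - R \le 2t, \qquad |\phi| \le \tfrac{t}{10},
\]
valid for $0 < t < t_0 = 1/2$. Writing $1-w = \alpha - i\beta$ with $\alpha = 1 - R\cos\phi$ and $\beta = R\sin\phi$, and combining the Taylor bound $1-\cos\phi \le \phi^2/2$ with the estimates above, one obtains $t/2 \le \alpha \le 3t$ and $|\beta| \le t/10$, hence $\alpha^2 - \beta^2 \ge t^2/2$.

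The key identity is $(1-w)(1-rw) = (1-r)(1-w) + r(1-w)^2$, whose real part equals $(1-r)\alpha + r(\alpha^2 - \beta^2)$. A short case split according to whether $1-r \ge t$ or $1-r < t$ then yields
\[
\operatorname{Re}\bigl[(1-w)(1-rw)\bigr] \ge C_1\, t\bigl((1-r)+t\bigr)
\]
for an absolute constant $C_1 > 0$. For the denominator, $|1-w|^2 = \alpha^2 + \beta^2 \le C\, t^2$, and the triangle inequality applied to $1-rw = (1-r) + r(1-w)$ gives $|1-rw| \le C'\bigl((1-r)+t\bigr)$, so
\[
|(1-w)(1-rw)|^2 \le C_2\, t^2 \bigl((1-r)+t\bigr)^2.
\]
Since $\operatorname{Re}(1/u) = \operatorname{Re}(u)/|u|^2$, dividing these two estimates produces assertion (1) with $c_0 = C_1/C_2 > 0$. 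Assertion (2) is then immediate: for $r \in [1-t,1]$ one has $(1-r)+t \le 2t$, so $t((1-r)+t) \le 2t^2$ and the lower bound from (1) becomes $c_0/(2t^2)$.

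The only subtle point is to verify that the trigonometric correction $R(1-\cos\phi)$ in $\alpha$ is of order $t^2$, hence negligible compared with $1-R \ge t/2$; this is what fixes the choice of $t_0$. Recovering the precise constants $c_0 = \sqrt{3}/36$ and $t_0 = 1/2$ claimed in the statement would require a sharper trigonometric optimization on the angular window $|\theta|\le t/20$, but for the later test-function constructions any absolute positive $c_0$ is already sufficient.
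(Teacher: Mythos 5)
Your proof is correct and takes a genuinely different route from the paper's. You estimate $\operatorname{Re}(u)/|u|^2$ directly by means of the algebraic identity $(1-w)(1-rw)=(1-r)(1-w)+r(1-w)^2$, isolating $\operatorname{Re}\bigl[(1-w)(1-rw)\bigr]=(1-r)\alpha+r(\alpha^2-\beta^2)$ and using the two non-negative terms to cover the regimes $1-r\gtrsim t$ and $1-r\lesssim t$. The paper instead works with the polar form $\operatorname{Re}(1/u)=\cos\vartheta/|u|$: it bounds the modulus $|u|\le 9\,t\bigl((1-r)+t\bigr)$ and, separately, controls the argument by showing $|\arg(1-z\overline{\lambda})|,\ |\arg(1-rz\overline{\lambda})|<\pi/12$ (via $\arctan(1/5)<\pi/12$), whence $\cos\vartheta>\sqrt3/2$. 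The paper's trigonometric route is what produces the clean explicit constant $c_0=\sqrt3/36$; your algebraic route avoids any argument/angle estimates and generalizes painlessly, at the cost of a worse (but still absolute) constant, which you correctly observe is immaterial for the downstream test-function estimates.

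One small arithmetic inaccuracy: from the stated bounds $\alpha\ge t/2$ and $|\beta|\le t/10$ you only get $\alpha^2-\beta^2\ge t^2/4-t^2/100=6t^2/25$, not $t^2/2$. The claim $\alpha^2-\beta^2\ge t^2/2$ is nonetheless true for $t<1/2$, but it requires the sharper bound $\alpha\ge 1-R\ge t-t^2/4\ge 7t/8$, which you should record if you want that numerical conclusion. As written, the weaker bound $\alpha^2-\beta^2\ge 6t^2/25$ already suffices for the rest of the argument, so this is a cosmetic issue rather than a gap.
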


\begin{proof}
For $z, \lambda \in E_t$, denote 
\[
\vartheta =\arg\frac{1}{(1 - z\overline{\lambda})(1 - rz\overline{\lambda})},
\]
we have
\[
\operatorname{Re}\left( \frac{1}{(1 - z\overline{\lambda})(1 - rz\overline{\lambda})} \right) =  \frac{\cos\vartheta}{|1 - z\overline{\lambda}||1 - rz\overline{\lambda}|}.
\]
By the triangle inequality,
\begin{equation}\label{1-zl}
\begin{split}
|1 - z\overline{\lambda}| &\le \left|1 - |z| |\lambda|\right| + |z| |\lambda| \left|1 - \frac{z\overline{\lambda}}{|z||\lambda|}\right| \\
&\le 1 - \left(1 - t\right)^2 + \left|\arg z\overline{\lambda}\right| \\
&\le 2t - t^2 + \frac{t}{10} \le 3t.
\end{split}
\end{equation}
And for $0\leq r\leq1$,
\begin{equation}\label{1-rzl}
\begin{split}
|1 - rz\overline{\lambda}| 
&= |1 - r + r(1 - z\overline{\lambda})| \le (1 - r) + |1 - z\overline{\lambda}|\\
&\le (1 - r) + 3t \le 3\left((1 - r) + t\right).
\end{split}
\end{equation}
Take $t_0 = 1/2$. 
For $0 < t < t_0$, $0\leq r\leq1$ and $z, \lambda \in E_t$ we have
\[
\operatorname{Re}\left( \frac{1}{(1 - z\overline{\lambda})(1 - rz\overline{\lambda})} \right) \ge \frac{\cos\vartheta}{|1 - z\overline{\lambda}||1 - rz\overline{\lambda}|} \ge \frac{\cos\vartheta}{9} \frac{1}{t\left((1 - r) + t\right)}.
\]
In particular, if $r \in [1 - t, 1]$, then
\[
\operatorname{Re}\left( \frac{1}{(1 - z\overline{\lambda})(1 - rz\overline{\lambda})} \right) \ge \frac{\cos\vartheta}{9} \frac{1}{t\left((1 - r) + t\right)} \ge \frac{\cos\vartheta}{18}\frac{1}{t^2}.
\]
It remains to prove $\vartheta$ has an upper bound.
For each $z, \lambda \in E_t$,
\[
\operatorname{Re}(1 - z\overline{\lambda}) = 1 - \operatorname{Re}(z\overline{\lambda}) \ge 1 - |z| |\lambda| \ge 1 - \left(1 - \frac{t}{2}\right)^2 \ge \frac{t}{2}.
\] 
Similarly, $\operatorname{Re}(1 - rz\overline{\lambda}) \ge t/2$.
Moreover,
\[
|\operatorname{Im}(1 - z\overline{\lambda})| = |\operatorname{Im}(z\overline{\lambda})| = |z||\lambda| \sin\left(\arg z\overline{\lambda}\right) \le \frac{t}{10},
\]
and similarly $|\operatorname{Im}(1 - rz\overline{\lambda})| \le t/10$.
Hence,
\begin{equation}\label{arg}
\left| \arg(1 - z\overline{\lambda}) \right| \le \arctan\left( \frac{|\operatorname{Im}(1 - z\overline{\lambda})|}{\operatorname{Re}(1-z\overline{\lambda})} \right) \le \arctan\left( \frac{t / 10}{t / 2} \right) = \arctan\left( \frac{1}{5} \right) < \frac{\pi}{12}.
\end{equation}
Also, we have $\left| \arg(1 - rz\overline{\lambda}) \right| < \pi/12$, and therefore
\[
\left| \arg\left[(1 - z\overline{\lambda})(1 - rz\overline{\lambda})\right] \right| \le \left| \arg(1 - z\overline{\lambda}) \right| + \left| \arg(1 - rz\overline{\lambda}) \right| < \frac{\pi}{6}.
\]
Thus,
\[
\left| \arg\frac{1}{(1 - z\overline{\lambda})(1 - rz\overline{\lambda})} \right| = \left| \arg\left[(1 - z\overline{\lambda})(1 - rz\overline{\lambda})\right] \right| < \frac{\pi}{6},\quad \cos \vartheta > \cos{\frac{\pi}{6}}.
\]
Taking $c_0=\sqrt{3}/36$ completes the proof.
\end{proof}
\begin{lemma}\label{lem:new19}
Let $E_t$ be defined in Lemma \ref{l-estimate of real part of K}. There exist absolute constants $\widetilde{c_0} > 0$ and $t_0 \in (0,1)$ (we can take $\widetilde{c_0} = 1/108$ and $t_0 =1/2$) such that for all $0 < t < t_0$ and all $z, \lambda \in E_t$, the following assertions hold:
\begin{itemize}
    \item[(1)] For every $r \in [0,1]$,
    \[
    \operatorname{Re}\left(\frac{\overline{\lambda}}{(1 - z \overline{\lambda})^2(1 - r z\overline{\lambda})} \right) \ge \frac{\widetilde{c_0}}{t^2\left((1 - r) + t\right)}, 
    \]
    and
    \[
    \operatorname{Re} \left(\frac{\overline{\lambda}}{(1 - z\overline{\lambda})(1 - r z\overline{\lambda})^2} \right) \ge \frac{\widetilde{c_0}}{t\left((1 - r) + t\right)^2}.
    \]

    \item[(2)] In particular, for every $r \in [1 - t, 1]$,
    \[
    \operatorname{Re} \left(\frac{\overline{\lambda}}{(1 - z\overline{\lambda})^2(1 - r z\overline{\lambda})} \right) \ge \frac{\widetilde{c_0}}{2t^3}, \quad 
    \operatorname{Re} \left(\frac{\overline{\lambda}}{(1 - z\overline{\lambda})(1 - r z\overline{\lambda})^2} \right) \ge \frac{\widetilde{c_0}}{4t^3}.
    \]
\end{itemize}
\end{lemma}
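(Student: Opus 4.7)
The plan is to follow the scheme of Lemma \ref{l-estimate of real part of K} almost verbatim: write each of the two complex quantities in polar form $R e^{i\vartheta}$, bound $R$ from below using the size estimates already established, and bound $\cos\vartheta$ from below by $1/2$. The target constant $\widetilde{c_0}=1/108$ will arise as $1/(2\cdot 54)$, the $1/2$ coming from the cosine and the $1/54=1/(2\cdot 3^{2}\cdot 3)$ from the size bounds.

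For the magnitudes, I will use $|\overline\lambda|=|\lambda|\ge 1-t\ge 1/2$ (valid because $\lambda\in E_t$ and $t<t_0=1/2$), together with the inequalities $|1-z\overline\lambda|\le 3t$ established in \eqref{1-zl} and $|1-rz\overline\lambda|\le 3((1-r)+t)$ established in \eqref{1-rzl}. These immediately yield
\[
\frac{|\overline\lambda|}{|1-z\overline\lambda|^{2}\,|1-rz\overline\lambda|}\ge \frac{1}{54\,t^{2}((1-r)+t)},\qquad
\frac{|\overline\lambda|}{|1-z\overline\lambda|\,|1-rz\overline\lambda|^{2}}\ge \frac{1}{54\,t((1-r)+t)^{2}}.
\]
For the arguments, set
\[
\vartheta_{1}=\arg\overline\lambda-2\arg(1-z\overline\lambda)-\arg(1-rz\overline\lambda),\quad
\vartheta_{2}=\arg\overline\lambda-\arg(1-z\overline\lambda)-2\arg(1-rz\overline\lambda).
\]
The restriction $\lambda\in E_t$ gives $|\arg\overline\lambda|\le t/20$, and the computation \eqref{arg} applies verbatim to $1-rz\overline\lambda$ for any $r\in[0,1]$, so $|\arg(1-z\overline\lambda)|,|\arg(1-rz\overline\lambda)|<\pi/12$. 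Hence
\[
|\vartheta_{j}|\le \frac{t}{20}+\frac{3\pi}{12}<\frac{1}{40}+\frac{\pi}{4}<\frac{\pi}{3},\qquad j=1,2,
\]
and therefore $\cos\vartheta_{j}>1/2$. Multiplying each magnitude lower bound by the corresponding cosine estimate establishes part (1) with $\widetilde{c_0}=1/108$. For part (2), specializing to $r\in[1-t,1]$ gives $(1-r)+t\le 2t$, which turns $t^{2}((1-r)+t)\le 2t^{3}$ and $t((1-r)+t)^{2}\le 4t^{3}$, producing the stated bounds $\widetilde{c_0}/(2t^{3})$ and $\widetilde{c_0}/(4t^{3})$.

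The only mildly subtle point, and thus the one requiring care, is the argument bookkeeping: the squared factor in each denominator doubles the corresponding argument contribution, so the total bound becomes $3\pi/12=\pi/4$ rather than the $2\pi/12=\pi/6$ that appeared in Lemma \ref{l-estimate of real part of K}. One must then verify that $\pi/4+t/20<\pi/3$; this is exactly the constraint that pins down $t_{0}=1/2$ and forces the choice $\widetilde{c_0}=1/108$. Beyond this, the argument is a direct adaptation of the previous lemma.
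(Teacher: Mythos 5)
Your proposal is correct and follows essentially the same route as the paper: bound the modulus below via $|\lambda|\ge 1-t\ge 1/2$, $|1-z\overline{\lambda}|\le 3t$, $|1-rz\overline{\lambda}|\le 3((1-r)+t)$, bound the total argument by $t/20+\pi/4<\pi/3$ so that the cosine exceeds $1/2$, and specialize $(1-r)+t\le 2t$ for part (2). The constants $\widetilde{c_0}=1/108$ and $t_0=1/2$ emerge exactly as in the paper's argument.
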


\begin{proof}
Let $z, \lambda \in E_t$. By \eqref{arg}, we have $|\arg(1 - z\overline{\lambda})| < \pi/12$ and $|\arg(1 - rz\overline{\lambda})| < \pi/12$. Furthermore, since $\lambda \in E_t$, we have $|\arg \overline{\lambda}| = |\arg \lambda| \le t/20$. Thus, the total argument of the first expression satisfies
\[
\left| \arg\left( \frac{\overline{\lambda}}{(1 - z \overline{\lambda})^2(1 - r z\overline{\lambda})} \right) \right| \le \frac{t}{20} + \frac{\pi}{4}.
\]
Take $t_0=1/2$, since $0<t<t_0$, this total argument is strictly less than $\pi/3$. 
Consequently, 
\[
\operatorname{Re}\left(\frac{\overline{\lambda}}{(1 - z \overline{\lambda})^2(1 - r z\overline{\lambda})} \right) \ge \cos\left(\frac{\pi}{3}\right) \frac{|\lambda|}{|1 - z\overline{\lambda}|^2 |1 - rz\overline{\lambda}|} \ge \frac{1}{2} \frac{1-t}{|1 - z\overline{\lambda}|^2 |1 - rz\overline{\lambda}|}.
\]
Note that $|\lambda|\geq 1-t\geq \frac{1}{2}$ for each $\lambda\in E_t$.
By \eqref{1-zl} and \eqref{1-rzl}, we have
\[
\operatorname{Re}\left(\frac{\overline{\lambda}}{(1 - z \overline{\lambda})^2(1 - r z\overline{\lambda})} \right) \ge \frac{1}{4} \frac{1}{(3t)^2 \cdot 3((1-r) + t)} = \frac{1}{108} \frac{1}{t^2((1-r) + t)}.
\]
Take $\widetilde{c_0}=1/108.$
The second inequality in assertion (1) follows by an identical argument. 

Finally, for $r \in [1 - t, 1]$, we have $(1 - r) + t \le 2t$. Substituting this into the results of assertion (1) yields the lower bounds in assertion (2) with the same constant $\widetilde{c_0}$.
\end{proof}

%For the remainder of the proof, we require the following estimate whose proof is given in Subsection \ref{prooflemma-kernel-low}.
Let $c_0$ and $t_0$ be the constants in Lemma \ref{l-estimate of real part of K}. For any $0<t<t_0$, pick $z_t \in E_t$ and define a function
\begin{equation}\label{f_{z_t}}
f_{z_t}(\lambda) = \frac{\overline{K_{\nu}(z_t,\lambda)}}{|K_{\nu}(z_t,\lambda)|} \mathbf{1}_{E_t}(\lambda), \quad \lambda \in \mathbb{D}.
\end{equation}
Then, we have $\|f_{z_t}\|_{L^p(\D)} = |E_{t}|^{\frac{1}{p}}$.
It suffices to show for $1<p<\infty, 1/q < 1/p$, we have
\[
	\sup_{0 < t < t_0}\frac{\|T_{\nu}f_{z_t}\|_{L^q(\mathbb{D})}}{\|f_{z_t}\|_{L^p(\mathbb{D})}}=\infty.
\]
Since there exist absolute constants $c_1$ and $c_2$ such that
\begin{equation}\label{|E_t|}
  c_1 t^2 \le |E_t| \le c_2 t^2,
\end{equation}
we have
\begin{equation}\label{||f||}
	c_1^{1/p} t^{2/p} \le \|f_{z_t}\|_{L^p(\mathbb{D})} \le c_2^{1/p} t^{2/p}.
\end{equation}
Moreover,  
\begin{align*}
|T_{\nu}f_{z_t}(z_t)| 
&= \int_{\mathbb{D}} \frac{\overline{K_{\nu}(z_t,\lambda)}}{|K_{\nu}(z_t,\lambda)|} \mathbf{1}_{E_t}(\lambda) K_{\nu}(z_t,\lambda)\,dA(\lambda) \\
&= \int_{E_t} \left |\frac{1}{1 - z_t\overline{\lambda}} \int_0^1 \frac{1}{1 - r z_t\overline{\lambda}} \,d\nu(r)\right | \,dA(\lambda).
\end{align*}
By Lemma \ref{l-estimate of real part of K}, for all $z, \lambda\in E_t$ and $r\in[0,1]$,
the integrand defining $T_\nu f_{z_t}(z_t)$ has nonnegative real part. Then,
\begin{equation}\label{|Tf|1}
\begin{aligned}
|T_{\nu}f_{z_t}(z_t)|&\ge \int_{E_t} \left | \operatorname{Re}  \int_{0 }^1 \frac{1}{(1 - z_t\overline{\lambda})(1 - r z_t\overline{\lambda})} \,d\nu(r) \right | \,dA(\lambda)  \\
&\ge \int_{E_t} \int_{1 - t}^1 \operatorname{Re} \frac{1}{(1 - z_t\overline{\lambda})(1 - r z_t\overline{\lambda})} \,d\nu(r) \,dA(\lambda)  \\
&\ge \int_{1 - t}^{1} \frac{c_0}{2t^{2}} \,d\nu(r) |E_t| \ge \frac{1}{2}c_0 c_1 \nu([1-t,1]).
\end{aligned}
\end{equation}
Since $|z_t| < 1 - t/2$, the ball
$$B\left(z_t, \frac{t}{4}\right) = \left \{\lambda \in \mathbb{C}: |\lambda - z_t| < \frac{t}{4}\right \}$$
is contained in $\mathbb{D}$. By the subharmonicity of $|T_{\nu}f_{z_t}|^q$ at $z_t$, we obtain
\begin{equation}\label{below_|Tf|}
|T_{\nu}f_{z_t}(z_t)|^q 
\le \frac{1}{\left|B\left(z_t, t/4\right)\right|} \int_{B\left(z_t, t/4\right)} |T_{\nu}f_{z_t}(\lambda)|^q \,dA(\lambda)
\le  \frac{16}{t^2} \int_{\mathbb{D}} |T_{\nu}f_{z_t}(\lambda)|^q \,dA(\lambda).
\end{equation}
Raising both sides to the power $1/q$ and combining with \eqref{|Tf|1}, we deduce
\[
\|T_{\nu}f_{z_t}\|_{L^q(\mathbb{D})} \ge 16^{-1/q} t^{2/q} |T_{\nu}f_{z_t}(z_t)| \ge \frac{1}{2} 16^{-1/q}c_0 c_1 t^{2/q} \nu([1-t,1]).
\]
Dividing both sides by $\|f_{z_t}\|_{L^p(\mathbb{D})}$ and applying \eqref{||f||}, we have
\[
\frac{\|T_{\nu}f_{z_t}\|_{L^q(\mathbb{D})}}{\|f_{z_t}\|_{L^p(\mathbb{D})}} \ge C t^{2/q - 2/p}\nu([1 - t,1]),
\]
where $C = \frac{1}{2} 16^{-1/q} c_0 c_1c_2^{-1/p}$.
Hence, it suffices to demonstrate that
\[
\sup_{0 < t \le 1} t^{2/q - 2/p} \nu([1 - t, 1]) = \infty.
\]
If $\nu(\{1\}) \ne 0$, then 
\[
\sup_{0 < t \le 1} t^{2/q - 2/p} \nu([1 - t, 1]) \ge \sup_{0 < t \le 1} t^{2/q - 2/p} \nu(\{1\}) = \infty.
\]
If $\nu(\{1\}) = 0$, we prove it by contradiction, assume this supremum were finite, then there would exist a constant $\widetilde{C} > 0$ such that
\[
\nu([1 - t, 1]) \le \widetilde{C} t^{2/p - 2/q}, \quad 0 < t \le 1.
\]
We can take $t_j = 2^{-j}$, and $I_j = [1 - 2^{-j}, 1 - 2^{-j-1})$ for $j = 0, 1, 2, \cdots$. Then we have $$\nu(I_j) \le \nu([1 - t_j, 1]) \le \widetilde{C} 2^{-j(2/p - 2/q)}$$ for all $j = 0, 1, 2, \cdots$. Therefore, we can take $s = \min \left\{1/2, 1/p - 1/q\right\} > 0$. Then
\begin{align*}
\int_0^1 (1 - r)^{-s} d\nu(r) 
&= \sum_{j=0}^{\infty} \int_{I_j} (1 - r)^{-s} \,d\nu(r)  \le \sum_{j=0}^{\infty} (2^{-j})^{-s} \nu(I_j) \\
&\le \widetilde{C} \sum_{j=0}^{\infty} (2^{-j})^{-(1/p - 1/q)} \cdot 2^{-j(2/p - 2/q)} = \widetilde{C} \sum_{j=0}^{\infty} 2^{-j(1/p - 1/q)} < \infty.
\end{align*}
Then we can take $c = 2/(2 - s) <2$, such that 
\[
\int_{0}^{1}(1 - r)^{2/c - 2} d\nu(r) = \int_0^1 (1 - r)^{-s} d\nu(r) < \infty.
\]
This contradicts the definition \eqref{def_c_nu} of $c_{\nu}$. Hence, we conclude that $T_{\nu} : L^p(\mathbb{D}) \to L^q(\mathbb{D})$ is unbounded.

\noindent\textbf{The case $c_\nu=2.$} 
We aim to show that $T_{\nu} : L^p(\mathbb{D}) \to L^q(\mathbb{D})$ is unbounded with
\[ 1<p<2\quad \text{and} \quad \frac{1}{q} < \frac{1}{p} - \frac{1}{2}.\] 

Let $c_0$ and $t_0$ be the constants in Lemma \ref{l-estimate of real part of K}. For any $0<t<t_0$, pick $z_t \in E_t$ and define a function $f_{z_t}$ as \eqref{f_{z_t}}.
It suffices to show for $1<p<\infty, 1/q < 1/p-1/2$, we have
\[
	\sup_{0 < t < t_0}\frac{\|T_{\nu}f_{z_t}\|_{L^q(\mathbb{D})}}{\|f_{z_t}\|_{L^p(\mathbb{D})}}=\infty.
\]
By Lemma \ref{l-estimate of real part of K}, for all $z, \lambda\in E_t$ and $r\in[0,1]$,
the integrand defining $T_\nu f_{z_t}(z_t)$ has nonnegative real part and combining \eqref{|Tf|1} Then,
\begin{equation}\label{|Tf|2}
\begin{aligned}
|T_{\nu}f_{z_t}(z_t)|
&\ge   \int_{E_t} \int_{0 }^1 \operatorname{Re} \frac{1}{(1 - z_t\overline{\lambda})(1 - r z_t\overline{\lambda})} \,d\nu(r) \,dA(\lambda) \\
&\ge \int_{0}^{1} \frac{c_0}{t\left((1 - r) + t\right)} \,d\nu(r) |E_t| \ge c_0 c_1 t \int_{0}^{1} \frac{1}{(1 - r) + t} \,d\nu(r).
\end{aligned}
\end{equation}
By \eqref{below_|Tf|}, we obtain
\[
|T_{\nu}f_{z_t}(z_t)|^q \le  \frac{16}{t^2} \int_{\mathbb{D}} |T_{\nu}f_{z_t}(\lambda)|^q \,dA(\lambda).
\]
Raising both sides to the power $1/q$ and combining with the earlier estimate for $|T_{\nu}f_{z_t}(z_t)|$, we deduce
\[
\|T_{\nu}f_{z_t}\|_{L^q(\mathbb{D})} \ge 16^{-1/q} t^{2/q} |T_{\nu}f_{z_t}(z_t)| \ge 16^{-1/q}c_0 c_1 t^{2/q + 1} \int_{0}^{1} \frac{1}{(1 - r) + t} \,d\nu(r).
\]
Dividing both sides by $\|f_{z_t}\|_{L^p(\mathbb{D})}$ and applying the norm bound for $f_{z_t}$, by \eqref{|Tf|2}, let $C = 16^{-1/q} c_0 c_1c_2^{-1/p}$. Then
\[
\frac{\|T_{\nu}f_{z_t}\|_{L^q(\mathbb{D})}}{\|f_{z_t}\|_{L^p(\mathbb{D})}} \ge C t^{2/q - 2/p + 1}\int_{0}^{1} \frac{1}{(1 - r) + t} \,d\nu(r).
\]
Note that by Monotone Convergence Theorem, we have
\[
	\lim\limits_{t\to 0}\int_{0}^{1} \frac{1}{(1 - r) + t} d\nu(r)=\int_{0}^{1} \frac{1}{(1 - r) }d\nu(r)\geq \nu([0,1])>0.
\]
Since
$2/q - 2/p + 1 < 0$, we find that
\[
\lim_{t\to 0}t^{2/q - 2/p + 1}\int_{0}^{1} \frac{1}{(1 - r) + t} \,d\nu(r) = \infty.
\]
It follows that
\[
\lim_{t\to 0}\frac{\|T_{\nu}f_{z_t}\|_{L^q(\mathbb{D})}}{\|f_{z_t}\|_{L^p(\mathbb{D})}} =\infty.
\]
We thus conclude that $T_{\nu} : L^p(\mathbb{D}) \to L^q(\mathbb{D})$ is unbounded.

\noindent\textbf{The case $1<c_\nu<2.$}
We aim to show that 
$T_\nu:L^p(\D)\to L^q(\D)$ is unbounded with
\[
	1<p<c_\nu'\quad \text{and} \quad \frac{1}{q}< \frac{1}{p}+\frac{1}{c_\nu}-1.
\]
Note that $T_\nu f$ is always holomorphic for $f\in L^1(\D)$.
Hence, it suffices to show that if $1 < p < c_{\nu}'$, and $1/q < 1/p + 1/c_{\nu} - 1$, 
there exists a function $f$ such that $f\in L^p_a(\D)$ but $T_\nu f\notin L^q_a(\D)$.

We record  \cite[Proposition 2.4]{BKV} as Lemma \ref{Lpestimate}, which will be used to construct analytic test functions with prescribed Taylor coefficients.

\begin{lemma}\label{Lpestimate}
  Suppose $a_n\ge 0$, $1<p<\infty$, and $C<\infty$, and that either
\begin{itemize}
  \item[(1)] $(a_n)$ is monotonic; or
  \item[(2)] for each $k$, $B_k = \{n: 2^{k} \le n < 2^{k+1}\}$, the subsequence $\left(a_n\right)_{n\in B_k}$ is monotonic and, for all $n,m\in B_k$, one has $a_m\le C\,a_n$.
\end{itemize}
Then $f(z) = \sum_{n=0}^{\infty} a_n z^n \in L_a^p(\mathbb{D})$ if and only if
\[
\sum_{n=0}^{\infty} (n+1)^{\,p-3}\,|a_n|^{p}<\infty .
\]
\end{lemma}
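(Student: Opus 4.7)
The plan is to reduce both sides of the claimed equivalence to comparable dyadic sums and match them block by block. First I would use polar coordinates and discretize the radial integral dyadically,
\[
\|f\|_{L_a^p(\mathbb{D})}^p = 2\int_0^1 r\,M_p^p(r,f)\,dr \;\asymp\; \sum_{j\ge 0} 2^{-j}\,M_p^p(r_j,f),\qquad r_j := 1-2^{-j},
\]
where $M_p(r,f)^p=\tfrac{1}{2\pi}\int_0^{2\pi}|f(re^{i\theta})|^p\,d\theta$. In parallel, split $f$ into dyadic Fourier blocks $f_k(z)=\sum_{n\in B_k}a_n z^n$. Under hypothesis~$(2)$ the coefficients on each $B_k$ satisfy $a_n\asymp a_{n_k}$, so $f_k=a_{n_k}\,g_k$ with $g_k$ a polynomial whose coefficients lie in $[C^{-1},C]$, and a direct estimate gives $|g_k(re^{i\theta})|\lesssim r^{2^k}\,\min\bigl(2^k,\,|1-re^{i\theta}|^{-1}\bigr)$.

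The key single-block identity is: for $p>1$ and $j\ge k$,
\[
M_p^p(r_j,f_k)\;\asymp\; a_{n_k}^p\,2^{k(p-1)},
\]
while for $j<k$ the factor $r_j^{2^k}$ makes the contribution super-polynomially small. For the upper bound I would combine Minkowski on the $\theta$-integral with a discrete Hardy inequality with geometric weight in $j$; for the lower bound I would exploit the non-negativity of the coefficients: on the real ray $z=r_j$ the dominant block contribution adds coherently, and combined with subharmonicity of $|f|^p$ on a small pseudo-hyperbolic disk around $r_j$ this yields $M_p^p(r_j,f)\gtrsim a_{n_j}^p\,2^{j(p-1)}$. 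Matching the two estimates,
\[
\|f\|_{L_a^p(\mathbb{D})}^p \;\asymp\; \sum_{k\ge 0} a_{n_k}^p\,2^{k(p-2)} \;\asymp\; \sum_{n\ge 0} (n+1)^{p-3}\,|a_n|^p,
\]
the last $\asymp$ using $a_n\asymp a_{n_k}$ and $(n+1)\asymp 2^k$ on $B_k$.

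The main obstacle I expect is the lower bound step: isolating a single block's contribution from $M_p(r_j,f)$, since the triangle inequality points the wrong way. The subharmonicity trick at $z=r_j$ uses non-negativity crucially, but to promote a pointwise estimate into a genuine $M_p$-lower bound I would really want the uniform $L_a^p$-boundedness of the dyadic Fourier projections $f\mapsto f_k$, a Marcinkiewicz-type multiplier fact on Bergman spaces; this gives $\|f_k\|_{L_a^p(\mathbb{D})}\lesssim\|f\|_{L_a^p(\mathbb{D})}$ and, combined with the single-block identity and a Hardy summation, the required lower bound. Hypothesis~$(1)$—global monotonicity without within-block boundedness—would be treated by repartitioning the indices into coarser blocks on which monotonicity forces bounded ratios, and then rerunning the same argument, absorbing the monotonic tails by summation by parts.
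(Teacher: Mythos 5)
Note first that the paper does not prove this lemma at all: it simply records it as a citation to Proposition 2.4 of Buckley--Koskela--Vukoti\'c \cite{BKV}. So there is no ``paper's own proof'' to compare against; what can be judged is whether your sketch would actually close.

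Your overall architecture --- dyadic radial discretization, dyadic Fourier blocks $f_k$, the single-block Dirichlet-kernel estimate $M_p^p(r_j,f_k)\asymp a_{n_k}^p 2^{k(p-1)}$ for $j\ge k$ with exponential decay for $j<k$, and matching the resulting dyadic sums --- is the right skeleton and is essentially how results of this type (Mateljevi\'c--Pavlovi\'c, BKV) are proved. The upper bound via Minkowski plus a geometrically weighted discrete Hardy inequality is sound. But there are three genuine gaps in the lower-bound half and in case~(1).

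First, the subharmonicity step does not give what you claim. Sub-mean-value for $|f|^p$ at $z=r_j$ yields
\[
|f(r_j)|^p \le \frac{1}{|D_j|}\int_{D_j}|f|^p\,dA, \qquad D_j:=D\bigl(r_j,2^{-j-2}\bigr),
\]
which is a \emph{lower} bound on the \emph{area} integral over $D_j$, not a lower bound on the circle mean $M_p^p(r_j,f)$. The fix is to bypass $M_p$ entirely: the disks $D_j$ are pairwise disjoint, so $\|f\|_{L_a^p}^p\ge\sum_j\int_{D_j}|f|^p\,dA\gtrsim\sum_j 2^{-2j}|f(r_j)|^p$, and non-negativity of the coefficients gives $f(r_j)\gtrsim a_{n_j}2^{j}$. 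This closes the lower bound directly and renders the next step moot.

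Second, the Marcinkiewicz route you reach for as the ``real'' justification does not work as stated. Uniform boundedness of the dyadic projections, $\|f_k\|_{L_a^p}\lesssim\|f\|_{L_a^p}$, gives only $\sup_k a_{n_k}^p 2^{k(p-2)}\lesssim\|f\|_{L_a^p}^p$; no Hardy summation converts a supremum into the $\ell^1$ sum $\sum_k a_{n_k}^p 2^{k(p-2)}$. (The sharper two-sided equivalence $\|f\|_{L_a^p}^p\asymp\sum_k 2^{-k}\|\Delta_k f\|_{H^p}^p$ is known to fail for general $f$ outside $p=2$; it is precisely the structural hypotheses (1)/(2) that rescue it, which is what the subharmonicity/disjoint-disk argument exploits.) So the appeal to a multiplier theorem is both unnecessary and, by itself, insufficient.

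Third, ``repartitioning into coarser blocks on which monotonicity forces bounded ratios'' does not handle case~(1). A monotone sequence can decay super-exponentially, in which case no dyadic-scale repartition yields bounded within-block ratios. The standard device here is Abel summation: write $f_k(z)=\sum_{n\in B_k}(a_n-a_{n+1})\sum_{m=2^k}^{n}z^m + a_{2^{k+1}}\sum_{m\in B_k}z^m$ and use $\bigl|\sum_{m=2^k}^{n}z^m\bigr|\le\min\bigl(2^k,2|1-z|^{-1}\bigr)$ to get $|f_k(z)|\lesssim a_{2^k}|z|^{2^k}\min\bigl(2^k,|1-z|^{-1}\bigr)$; one then observes that for monotone $(a_n)$ the coefficient sum $\sum_n(n+1)^{p-3}a_n^p$ is comparable to $\sum_k 2^{k(p-2)}a_{2^k}^p$ regardless of within-block ratios, and the argument proceeds as before. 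With these repairs the sketch closes; as written it has a real hole in the lower bound and in case~(1).
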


Let $s_0$ be defined as in \eqref{eq:s0_def}, then 
\[
0 < s_0 = \frac{2}{c_{\nu}'}<1.
\]
Since $1/q < 1/p + 1/c_{\nu} - 1$, we have
\[
\frac{2}{p}-1>\frac{2}{q}+\frac{2}{c_\nu'}-1=\frac{2}{q}+s_0-1.
\]
Hence, there is a constant $t$ such that
\begin{align}\label{findt}
\frac{2}{p} - 1 > t > \frac{2}{q} + s_0 - 1.
\end{align}
Take $\varepsilon>0$ small enough such that 
\begin{align}\label{findeps1}
	 t > \frac{2}{q} + s_0 - 1+2\varepsilon,
\end{align} 
and
\begin{align}\label{findeps2}
	\varepsilon< \frac{1 - s_0}{q + 1}<1 - s_0.
\end{align} 
By Corollary \ref{prop:sum_of_mn}, there is an increasing subsequence denoted by $\Lambda$ such that
\begin{align*}
\Lambda = \left\{ n \in\N: m_n \ge (n+1)^{-s_0 - \varepsilon}\right\}
\end{align*}
and
\begin{align}\label{infinitesum}
\sum_{n \in \Lambda} m_{n} (n + 1)^{s - 1} = \infty,
\end{align}
for $s_0<s < 1$.

We divide the argument into two cases: $2\leq p <c_\nu'$ and $1< p<2$.
\medskip

\noindent\textit{\textbf{The case $2\leq p <c_\nu'$.}}
In this case, $t < 2/p - 1 \le 0$. Let 
\begin{align}\label{testingft}
f_t(z) = \sum_{n = 0}^{\infty} (n+1)^t z^n.\end{align}
Since $t < 2 / p - 1$, we have $tp + p - 3 < -1$.
Then 
\[
\sum_{n=0}^{\infty} (n+1)^{p - 3} (n+ 1)^{tp} = \sum_{n=0}^{\infty} (n+1)^{tp + p - 3} < \infty.
\]
By the Lemma \ref{Lpestimate}, we have $f_t \in L_a^p(\mathbb{D})$.

Recall the definition of $m_n$ given in \eqref{momentsequence}:
\begin{align*}
m_n &= \frac{1}{n+1} \int_{0}^{1} \frac{1 - r^{n+1}}{1 - r} d\nu(r) = 
\int_{0}^{1} \frac{1}{n+1} \sum_{k=0}^n r^k \, d\nu(r).
\end{align*}
Since $r^{n+1} \leq r^k$ for all $0 \leq k \leq n$, it follows that
\[
r^{n+1} \leq \frac{1}{n+1} \sum_{k=0}^{n} r^k.
\]
Furthermore, we compute $m_{n+1}$ as follows:
\begin{align*}
m_{n+1} 
  &= \int_{0}^{1} \frac{1}{n+2} \sum_{k=0}^{n+1} r^k \, d\nu(r) = \int_{0}^{1} \frac{n+1}{n+2} \left( \frac{1}{n+1}\sum_{k=0}^{n} r^k + \frac{r^{n+1}}{n+1} \right) d\nu(r)\\
  &\leq \int_{0}^{1} \frac{n+1}{n+2} \left( \frac{1}{n+1}\sum_{k=0}^{n} r^k + \frac{1}{(n+1)^2}\sum_{k=0}^{n} r^k \right) d\nu(r) \\
  &= \int_{0}^{1} \frac{1}{n+1} \sum_{k=0}^{n} r^k \, d\nu(r) = m_n.
\end{align*}
We thus conclude that the sequence $\{m_n\}$ is non-increasing. Using the coefficient representation of $T_\nu$ (cf. \eqref{tcm}), we have 
\[
T_{\nu} f_t(z) = \sum_{n=0}^{\infty} m_n (n+1)^t z^n.
\]
Since $t < 0$ and $\{m_n\}$ is non-increasing and positive, it follows that the sequence $\{m_n (n+1)^t\}$ is non-increasing as well. We claim that
\[
T_{\nu} f_t \notin L_a^q(\mathbb{D}).
\]
To verify this claim, by Lemma \ref{Lpestimate}, it suffices to show
\[
\sum_{n=0}^{\infty} (n+1)^{q - 3} \left(m_n (n+1)^t\right)^q = \infty.
\]
Indeed, for all $n \in \Lambda$, we have the lower bound $m_n \ge (n+1)^{-s_0 - \varepsilon}$. Substituting this estimate into the series yields
\begin{equation}\label{TfLq1}
\begin{aligned}
  \sum_{n=0}^{\infty} (n+1)^{q - 3} \left(m_n (n+1)^t\right)^q
  &\ge \sum_{n \in \Lambda} m_n \cdot m_n^{q - 1} (n+1)^{tq + q - 3} \\
  &\ge \sum_{n \in \Lambda} m_n (n + 1)^{\left(-s_0 - \varepsilon\right)(q - 1)} (n+1)^{tq + q - 3}\\
  &= \sum_{n \in \Lambda} m_n (n+1)^{\left(-s_0 - \varepsilon\right)(q - 1) + tq + q - 3}.
\end{aligned}
\end{equation}
By \eqref{findeps1}, we have
\[ tq + q - 3>\left(\frac{2}{q} + s_0 + 2\varepsilon\right)q - 3.\]
Hence,
\begin{equation}\label{TfLq2}
\begin{aligned}
  \sum_{n=0}^{\infty} (n+1)^{q - 3} \left(m_n (n+1)^t\right)^q
  &\ge \sum_{n \in \Lambda} m_n (n+1)^{\left(-s_0 - \varepsilon\right)(q - 1) + \left(2/q + s_0 + 2\varepsilon\right)q - 3} \\
  &= \sum_{n \in \Lambda} m_n (n+1)^{s_0 + \varepsilon(q + 1) - 1}.
\end{aligned}
\end{equation}
By the choice of $\varepsilon$ (cf. \eqref{findeps2}), we have $1>s_0 + \varepsilon(q + 1) > s_0$. By \eqref{infinitesum}, we obtain
\[
\sum_{n \in \Lambda} m_n (n+1)^{s_0 + \varepsilon(q + 1) - 1} = \infty.
\]
Lemma \ref{Lpestimate} shows that $T_{\nu} f_t \notin L_a^q(\mathbb{D})$.
\medskip

\noindent\textit{\textbf{The case $1< p< 2$.}} 
In this case, $2/p - 1 > 0$. By \eqref{findt}, there exists a constant $t$ such that
\begin{align*}
\frac{2}{p} - 1 > t > \frac{2}{q} + \frac{2}{c_{\nu}'} - 1.
\end{align*}
If $t \le 0$, then the sequence $\{m_n(n+1)^t\}$ is non-increasing.
Hence, we reduce to the case $p \ge 2$.
If $t > 0$, however, the sequence $\{m_n(n+1)^t\}$ is not necessarily monotonic in general. 
We therefore introduce a suitable test function to proceed with our argument.

For each non-negative integer $k$ (i.e., $k = 0, 1, 2, \dots$), we define the block of non-negative integers
\[
B_k = \{n: 2^k \leq n < 2^{k+1}\},
\]
Let $l_k = 2^k$. 
For any $n \in B_k$, we have
\begin{equation}\label{block-m}
m_n = \frac{\int_0^1 \frac{1 - r^{n+1}}{1 - r} d\nu(r)}{n+1} \geq \frac{\int_0^1 \frac{1 - r^{l_k+1}}{1 - r} d\nu(r)}{n+1} = \frac{l_k+1}{n+1} m_{l_k} \geq \frac{1}{2}m_{l_k}.
\end{equation}
Define the coefficient sequence in each $B_k$ for $k = 0, 1, 2, \cdots$
\[
a_n = 
  \frac{m_{l_k}}{m_n}(n+1)^t,  \quad n \in B_k, \text{ and } k = 0, 1, 2,\cdots.
\]
We then introduce our modified test function as
\begin{align}\label{testinggt}
g_t(z) = \sum_{n=0}^{\infty}a_n z^n.
\end{align}
Note that,
\[
\sum_{n=0}^{\infty} |a_n|^p (n+1)^{p - 3} \le 2^p  \sum_{n=0}^{\infty} (n+1)^{tp} (n+1)^{p - 3} = 2^p  \sum_{n=0}^{\infty} (n+1)^{p(t+1) - 3}.
\]
Since $t < 2/p - 1$ which implies that $p(t+1) - 3 < -1$, we have
\[
	\sum_{n=0}^{\infty} |a_n|^p (n+1)^{p - 3} < \infty.
\]
Moreover, since $t > 0$, we have $a_n = \frac{m_{l_k}}{m_n}(n+1)^t$ is increasing in each $B_k$.
By \eqref{block-m}, for all $n, l \in B_{k}$, 
\[
\frac{a_n}{a_l} = \frac{(n+1)^t}{(l+1)^t} \frac{m_l}{m_n} \le 2^{t+1}.
\]
Thus, by Lemma \ref{Lpestimate}, $g_t\in L_a^p(\mathbb{D})$.

Now, we show that $T_{\nu}g_t \notin L_a^q(\mathbb{D})$. $T_{\nu}g_t(z) = \sum_{k = 0}^{\infty}\sum_{n \in B_k} m_{l_k} (n+1)^t z^n$. For each $k\in\N$ and $n\in B_k$, $\{m_{l_k} (n+1)^t\}$ is monotonic, and
\[
\frac{(n+1)^t m_{l_k}}{(l+1)^tm_{l_k}} = \frac{(n+1)^t }{(l+1)^t} \le 2^t, \quad n, l \in B_k.
\]
Then, by Lemma \ref{Lpestimate}, we only need to show that $\sum_{k = 0}^{\infty}\sum_{n \in B_k} (m_{l_k} (n+1)^t)^q (n + 1)^{q - 3} = \infty$. Since $m_n$ is decreasing, 
\begin{align*}
  \sum_{k = 0}^{\infty}\sum_{n \in B_k} (m_{l_k} (n+1)^t)^q (n + 1)^{q - 3}
  &\ge \sum_{k = 0}^{\infty}\sum_{n \in B_k} (m_{n} (n+1)^t)^q (n + 1)^{q - 3}\\
  &\ge \sum_{n = 0}^{\infty}(m_{n} (n+1)^t)^q (n + 1)^{q - 3}.
\end{align*}
The same argument as in \eqref{TfLq1} and \eqref{TfLq2} completes the proof.

\section{The proof of Theorem \ref{thm:new2}}\label{Thm:2-3}
For the proof of Theorem \ref{thm:new2}, in this section, we state and prove three results on the critical boundary.
For simplication of the notation,
for any real number $\alpha\geq 0$, a positive finite measure $\nu$ on $[0,1)$ is said to satisfy the $\alpha$-Carleson condition if
\begin{align*}
\sup_{0 < t \le1}\frac{\nu([1-t,1))}{t^\alpha} < \infty.
\end{align*}
In particular, the measure $\nu$ satisfies the $0$-Carleson condition means that it is a finite measure. A positive measure $\nu$ is integrable with respect to the hyperbolic arc length on $[0,1)$ if
\begin{align*}
\int_{[0,1)} \frac{1}{1-r^2} \, d\nu(r) < \infty.
\end{align*}

\begin{theorem}\label{weak bd on the line}
Let $\nu$ be a finite positive Borel measure on $[0,1]$. Let $c_\nu'$ be the dual index of $c_\nu$. Suppose that $1 < p < c_{\nu}'$, and consider $q$ satisfying the identity
\[
\frac{1}{q} = \frac{1}{p} + \frac{1}{c_{\nu}} - 1.
\]
Then the following hold:
\begin{itemize}
  \item [(a)] If $1 \le c_{\nu} < 2$, the following conditions are equivalent:
\begin{itemize}
  \item[(1)] $T_{\nu} : L^p(\mathbb{D}) \to L^{q,\infty}(\mathbb{D})$ is bounded;
  \item[(2)] $T_{\nu} : L^p(\mathbb{D}) \to L^q(\mathbb{D})$ is bounded;
  \item[(3)] The measure $\nu$ satisfies the $2/c_\nu'$-Carleson condition.
\end{itemize}
  \item [(b)] If $c_{\nu} = 2$ (whence $1 < p < 2$ and $q = \frac{2p}{2-p}$), the following conditions are equivalent:
\begin{itemize}
 \item[(1)] $T_{\nu} : L^p(\mathbb{D}) \to L^{q,\infty}(\mathbb{D})$ is bounded;
  \item[(2)] $T_{\nu} : L^p(\mathbb{D}) \to L^q(\mathbb{D})$ is bounded;
  \item[(3)] The measure $\nu$ is integrable with respect to the hyperbolic arc length on $[0,1)$.
\end{itemize}
\end{itemize}
\end{theorem}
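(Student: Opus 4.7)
The strategy is to close the cyclic chain $(2)\Rightarrow(1)\Rightarrow(3)\Rightarrow(2)$ in both cases. The implication $(2)\Rightarrow(1)$ is immediate from the embedding $L^q(\mathbb{D})\hookrightarrow L^{q,\infty}(\mathbb{D})$. For $(3)\Rightarrow(2)$, the plan is to upgrade the integrability hypothesis on $\nu$ to Calder\'on--Zygmund size and smoothness estimates for $K_\nu$: in case (a), Lemma \ref{u-estimate of K} (together with Corollary \ref{cor:forcnu1} when $c_\nu=1$) shows that the $2/c_\nu'$-Carleson condition makes $T_\nu$ a $2/c_\nu$-Bergman--CZO, while in case (b), Lemma \ref{u-estimate of K-2} shows that hyperbolic integrability makes $T_\nu$ a $1$-Bergman--CZO. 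Setting $\alpha:=2/c_\nu$, the critical identity $1/q=1/p+1/c_\nu-1=1/p+\alpha/2-1$ and the admissible range $1<p<2/(2-\alpha)=c_\nu'$ place us exactly in the hypothesis of Proposition \ref{alpha Bergman CZO}(a), which delivers the strong-type bound.

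The substantive direction is $(1)\Rightarrow(3)$, asserting that even weak-type boundedness already forces the full integrability on $\nu$. The plan is to test $T_\nu$ on the boundary-box test functions
\[
f_{z_t}(\lambda)=\frac{\overline{K_\nu(z_t,\lambda)}}{|K_\nu(z_t,\lambda)|}\,\mathbf{1}_{E_t}(\lambda)
\]
already introduced in Section 5, where $E_t$ and the absolute constants $c_0,t_0$ come from Lemma \ref{l-estimate of real part of K} and $z_t\in E_t$. Then \eqref{||f||} gives $\|f_{z_t}\|_{L^p(\mathbb{D})}\asymp t^{2/p}$, and the positive-real-part computations in Lemma \ref{l-estimate of real part of K} yield the pointwise lower bounds $|T_\nu f_{z_t}(z_t)|\gtrsim \nu([1-t,1])$ in case (a), from \eqref{|Tf|1}, and $|T_\nu f_{z_t}(z_t)|\gtrsim t\int_0^1 d\nu(r)/((1-r)+t)$ in case (b), from \eqref{|Tf|2}.

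The new ingredient is a weak-$L^q$ pointwise estimate for the holomorphic function $g=T_\nu f_{z_t}$. Since $|g|^{q/2}$ is subharmonic, the sub-mean-value inequality on the ball $B(z_t,t/4)\subset\mathbb{D}$, combined with the embedding $L^{q,\infty}(B)\hookrightarrow L^{q/2}(B)$ (valid on any finite-measure set since $q/2<q$), yields
\[
|g(z_t)|\lesssim t^{-2/q}\,\|g\|_{L^{q,\infty}(\mathbb{D})}.
\]
Combining this with the weak-type hypothesis and $\|f_{z_t}\|_{L^p}\asymp t^{2/p}$ gives the upper bound $|T_\nu f_{z_t}(z_t)|\lesssim t^{2/p-2/q}$. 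Matching with the lower bounds via the critical identity $2/p-2/q=2-2/c_\nu=2/c_\nu'$ yields in case (a) the uniform estimate $\nu([1-t,1])\lesssim t^{2/c_\nu'}$ for $0<t<t_0$, from which the $2/c_\nu'$-Carleson condition follows (the range $t\in[t_0,1]$ is handled by the finiteness of $\nu$). In case (b) the exponent equals $1$, giving $\int_0^1 d\nu(r)/((1-r)+t)\lesssim 1$ uniformly in $t$; letting $t\to 0^+$ and applying monotone convergence produces the hyperbolic integrability.

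The principal technical obstacle I expect is the weak-$L^q$ subharmonic pointwise bound with the sharp exponent of $t$, since this is precisely what calibrates the critical line $1/q=1/p+1/c_\nu-1$ against the Carleson exponent $2/c_\nu'$; once this step is secured, the remainder is a careful bookkeeping of constants adapted from the necessity proof of Theorem \ref{pq-necessary} in Section 5.
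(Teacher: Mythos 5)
Your proposal is correct, and the overall architecture $(2)\Rightarrow(1)\Rightarrow(3)\Rightarrow(2)$ matches the paper; the $(2)\Rightarrow(1)$ and $(3)\Rightarrow(2)$ implications are handled exactly as in the paper (via Lemmas \ref{u-estimate of K}, \ref{u-estimate of K-2}, Corollary \ref{cor:forcnu1}, and Proposition \ref{alpha Bergman CZO}(a), using $\alpha=2/c_\nu$).

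Your $(1)\Rightarrow(3)$, however, takes a genuinely different route. The paper tests $T_\nu$ against the plain indicator function $f_t=\mathbf{1}_{E_t}$ (not the modulated function $f_{z_t}$ from Section~5). Because Lemma \ref{l-estimate of real part of K} already shows $\operatorname{Re}K_\nu(z,\lambda)\ge 0$ for all $z,\lambda\in E_t$ and $r\in[0,1]$, no phase modulation is needed; moreover the lower bound $|T_\nu f_t(z)|\gtrsim\nu([1-t,1))$ (resp.\ $\gtrsim t\int_0^1 d\nu(r)/((1-r)+t)$) then holds \emph{simultaneously for every} $z\in E_t$, not just at one chosen point. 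This makes $E_t$ a subset of the superlevel set $\{|T_\nu f_t|>\tau\}$ for a single, $z$-independent test function, so the weak-type definition applies directly: $|E_t|\le (C\|f_t\|_p/\tau)^q$ gives the Carleson (resp.\ hyperbolic integrability) bound with no further ado. Your route instead imports the modulated $f_{z_t}$ from the strong-type necessity argument, obtains a pointwise lower bound only at $z_t$, and then converts the global $L^{q,\infty}$ control into a pointwise estimate by combining the sub-mean-value property of $|T_\nu f_{z_t}|^{q/2}$ with the local embedding $L^{q,\infty}(B)\hookrightarrow L^{q/2}(B)$ on the disk $B(z_t,t/4)$. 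That subharmonicity step is sound (and the exponent bookkeeping comes out right: $|g(z_t)|\lesssim |B|^{-1/q}\|g\|_{L^{q,\infty}}$), so the argument closes. What the paper's approach buys is brevity and transparency — the level-set definition of weak $L^q$ is used head-on, with one test function per scale. What your approach buys is a uniform template: the same modulated test function and the same pointwise lower bound serve both the strong-type necessity of Theorem~\ref{pq-necessary} and the weak-type necessity here, at the cost of an extra subharmonicity step that the positive-real-part structure of the kernel actually renders avoidable.
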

\begin{theorem}\label{weak bd}
Let $\nu$ be a finite positive Borel measure on $[0,1]$. Let $c_\nu'$ be the dual index of $c_\nu$.  Then
  \begin{itemize}
    %\item[(a)] If $c_\nu=1$, $T_{\nu}: L^1(\mathbb{D}) \to L^{1,\infty}(\mathbb{D})$ is bounded 
    \item[(a)] If $1 \leq c_{\nu} < 2$, $T_{\nu}: L^1(\mathbb{D}) \to L^{c_{\nu},\infty}(\mathbb{D})$ is bounded if and only if the measure $\nu$ satisfies the $2/c_\nu'$-Carleson condition.
    \item[(b)] If $c_{\nu} = 2$, $T_{\nu}: L^1(\mathbb{D}) \to L^{2,\infty}(\mathbb{D})$ is bounded if and only if
the measure $\nu$ is integrable with respect to the hyperbolic arc length on $[0,1)$.
  \end{itemize}
\end{theorem}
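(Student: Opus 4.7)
The plan is to split the argument into sufficiency, which falls out of the machinery already in place, and necessity, which is the substantive direction and can be handled by a single test-function construction that simultaneously drives both cases.

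\textbf{Sufficiency.} When $c_\nu=1$, Corollary~\ref{cor:forcnu1} shows $T_\nu$ is a $2$-Bergman--CZO, and Proposition~\ref{alpha Bergman CZO}(b) with $\alpha=2$ yields $T_\nu:L^1(\mathbb{D})\to L^{1,\infty}(\mathbb{D})$ (the $0$-Carleson condition being automatic from finiteness of $\nu$). For $1<c_\nu<2$ under the $2/c_\nu'$-Carleson condition, Lemma~\ref{u-estimate of K} makes $T_\nu$ a $2/c_\nu$-Bergman--CZO, and Proposition~\ref{alpha Bergman CZO}(b) delivers the weak-type $(1,c_\nu)$ bound. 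For $c_\nu=2$ under hyperbolic integrability, Lemma~\ref{u-estimate of K-2} shows $T_\nu$ is a $1$-Bergman--CZO, and the same proposition yields $T_\nu:L^1(\mathbb{D})\to L^{2,\infty}(\mathbb{D})$.

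\textbf{Necessity: common construction.} I would test $T_\nu$ against the indicator $f_t=\mathbf{1}_{E_t}$, where $E_t$ is the boundary box from Lemma~\ref{l-estimate of real part of K} and $0<t<t_0$, so that $\|f_t\|_{L^1(\mathbb{D})}=|E_t|\asymp t^2$. Fubini together with the lower bound
\[
\operatorname{Re}\frac{1}{(1-z\overline{\lambda})(1-rz\overline{\lambda})}\ \ge\ \frac{c_0}{t\bigl((1-r)+t\bigr)}
\]
from Lemma~\ref{l-estimate of real part of K}(1), valid for $z,\lambda\in E_t$ and $r\in[0,1]$, produces
\[
|T_\nu f_t(z)|\ \ge\ \operatorname{Re}(T_\nu f_t)(z)\ \gtrsim\ t\int_0^1\frac{d\nu(r)}{(1-r)+t}\quad\text{for every }z\in E_t.
\]
Taking $\lambda_t$ equal to half this bound, the superlevel set $\{|T_\nu f_t|>\lambda_t\}$ contains $E_t$ and so has measure $\gtrsim t^2$. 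Combining this with the weak-type hypothesis $T_\nu:L^1\to L^{q,\infty}$ gives the master inequality
\[
t^{1+2/q}\int_0^1\frac{d\nu(r)}{(1-r)+t}\ \lesssim\ t^2,\quad\text{uniformly in }0<t<t_0.
\]

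\textbf{Necessity: extraction and the main obstacle.} For case (a) with $q=c_\nu\in[1,2)$, restricting the integral on the left to $r\in[1-t,1]$ and using $(1-r)+t\le 2t$ reduces the master inequality to $\nu([1-t,1])\lesssim t^{2-2/c_\nu}$, which is exactly the $2/c_\nu'$-Carleson condition. The main obstacle lies in case (b), $c_\nu=2$: the same restriction would only yield $\nu([1-t,1))\lesssim t$, strictly weaker than hyperbolic integrability, since a measure with $\nu(A_j)\lesssim 2^{-j}$ on dyadic annuli can still satisfy $\int(1-r)^{-1}d\nu=\infty$. To overcome this, I keep the full integral over $r\in[0,1]$, so that the master inequality with $q=2$ reads $\int_0^1 (1-r+t)^{-1}d\nu(r)\lesssim 1$ uniformly in $t$. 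Since $c_\nu=2$ forces $\nu(\{1\})=0$, the monotone convergence theorem as $t\to 0^+$ upgrades this uniform weak-type estimate into the pointwise integrability $\int_{[0,1)}(1-r)^{-1}d\nu(r)<\infty$, equivalent (up to an absolute constant) to $\int_{[0,1)}(1-r^2)^{-1}d\nu(r)<\infty$. This $t\to 0^+$ step is the crux of the argument: it converts a uniform-in-$t$ Carleson-type datum into genuine absolute integrability.
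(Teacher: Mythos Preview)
Your proposal is correct and follows essentially the same route as the paper. The sufficiency is handled identically (Bergman--CZO machinery via Lemmas~\ref{u-estimate of K}, \ref{u-estimate of K-2}, Corollary~\ref{cor:forcnu1}, and Proposition~\ref{alpha Bergman CZO}(b)), and for necessity the paper simply observes that the implication $(1)\Rightarrow(3)$ in Theorem~\ref{weak bd on the line} goes through verbatim for $p=1$, $q=c_\nu$---which is precisely your test-function argument with $f_t=\mathbf{1}_{E_t}$, the real-part lower bound from Lemma~\ref{l-estimate of real part of K}, and the monotone-convergence step in the $c_\nu=2$ case.
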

As noted in Remark~\ref{BMO-bloch} in the introduction, we prove the following Bloch space version of Theorem~\ref{dual of weak 1 alpha}.

\begin{theorem}\label{dual of weak 1 alpha}
  Let $\nu$ be a finite positive Borel measure on $[0,1]$. Then
  \begin{itemize}
    \item[(a)] If $1 \le c_{\nu} < 2$, the operator $T_{\nu}: L^{c_{\nu}'}(\mathbb{D}) \to \mathcal{B}$ is bounded if and only if
the measure $\nu$ satisfies the $2/c_\nu'$-Carleson condition. 
    \item[(b)] If $c_{\nu} = 2$, the operator $T_{\nu}: L^2(\mathbb{D}) \to \mathcal{B}$ is bounded if and only if
the measure $\nu$ is integrable with respect to the hyperbolic arc length on $[0,1)$.
  \end{itemize}
\end{theorem}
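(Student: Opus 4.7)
The plan is to handle the two implications separately, with sufficiency following from the Bergman-CZO machinery already built in the excerpt, and necessity obtained by specific localized test functions on the boundary boxes $E_t$ of Lemma \ref{l-estimate of real part of K}. For sufficiency, Lemma \ref{u-estimate of K}, Lemma \ref{u-estimate of K-2} and Corollary \ref{cor:forcnu1} together imply that, under either the $2/c_\nu'$-Carleson condition (when $1 \le c_\nu < 2$) or the hyperbolic integrability condition (when $c_\nu = 2$), the operator $T_\nu$ is a $2/c_\nu$-Bergman--CZO. A short arithmetic check gives $\frac{2}{2 - 2/c_\nu} = c_\nu'$ whenever $c_\nu > 1$, so Proposition \ref{alpha Bergman CZO}(c) directly yields $T_\nu : L^{c_\nu'}(\mathbb{D}) \to \mathcal{B}$. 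At the degenerate endpoint $c_\nu = 1$ the Carleson condition is automatic (any finite $\nu$ satisfies it, since $\nu([1-t,1)) \le \nu([0,1]) < \infty$), and the claim reduces to $T_\nu : L^{\infty}(\mathbb{D}) \to \mathcal{B}$, which follows from the classical Bergman projection boundedness $P : L^{\infty}(\mathbb{D}) \to \mathcal{B}$ (Coifman--Rochberg--Weiss) combined with the kernel decomposition used in Corollary \ref{cor:forcnu1}.

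For the necessity direction in part (a) with $1 < c_\nu < 2$, I would fix $t \in (0, t_0)$, pick $z_t \in E_t$, and take $f_t = \mathbf{1}_{E_t}$. Differentiating under the integral in
\[
(T_\nu f_t)'(z_t) = \int_{E_t} \partial_z K_\nu(z_t, \lambda)\,dA(\lambda)
\]
and using the expansion of $\partial_z K_\nu$ from \eqref{eq: partial K}, the integrand is a sum of two terms whose real parts are each nonnegative on $E_t$ by Lemma \ref{lem:new19}. Discarding the second summand and restricting the $r$-integration to $[1-t, 1]$, Lemma \ref{lem:new19}(2) gives
\[
|(T_\nu f_t)'(z_t)| \ge \operatorname{Re}(T_\nu f_t)'(z_t) \ge \frac{\widetilde{c_0}}{2 t^3}\,\nu([1-t, 1])\,|E_t|.
\]
Since $|E_t| \ge c_1 t^2$ and $1 - |z_t|^2 \ge t/2$, one obtains $\|T_\nu f_t\|_{\mathcal{B}} \ge C\,\nu([1-t, 1])$. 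Combining with $\|f_t\|_{L^{c_\nu'}(\mathbb{D})} = |E_t|^{1/c_\nu'} \le c_2^{1/c_\nu'} t^{2/c_\nu'}$ and the standing boundedness hypothesis forces $\nu([1-t, 1]) \le C\, t^{2/c_\nu'}$. Because $\nu(\{1\}) = 0$ in this range, $\nu([1-t, 1)) = \nu([1-t, 1])$, so the $2/c_\nu'$-Carleson condition follows.

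For the necessity direction in part (b), the same test function $f_t = \mathbf{1}_{E_t}$ is used, but now Lemma \ref{lem:new19}(1) provides a lower bound valid for every $r \in [0, 1]$, giving
\[
|(T_\nu f_t)'(z_t)| \ge \frac{\widetilde{c_0}}{t^2}\,|E_t| \int_0^1 \frac{d\nu(r)}{(1-r) + t} \ge C \int_0^1 \frac{d\nu(r)}{(1-r) + t}.
\]
Multiplying by $1 - |z_t|^2 \ge t/2$ and dividing by $\|f_t\|_{L^2(\mathbb{D})} \le c_2^{1/2}\, t$ produces a $t$-independent upper bound $\int_0^1 d\nu(r)/((1-r) + t) \le C'$, and letting $t \to 0^{+}$ via the Monotone Convergence Theorem yields $\int_0^1 d\nu(r)/(1-r) < \infty$, which is equivalent (up to an absolute constant) to $\int_{[0,1)} d\nu(r)/(1-r^2) < \infty$. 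The main technical obstacle in both necessity arguments is to ensure that the complex phases of the various kernel summands, indexed by $r \in [0,1]$, do not destructively interfere on $E_t$; this is precisely what the argument-angle estimates in Lemmas \ref{l-estimate of real part of K} and \ref{lem:new19} rule out, using the restricted geometry of $E_t$. Once these estimates are invoked, the remaining work is a clean comparison between $\nu$-weighted lower bounds and the power-of-$t$ upper bound coming from $\|f_t\|_{L^{c_\nu'}(\mathbb{D})}$.
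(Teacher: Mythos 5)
Your proposal is correct and follows essentially the same route as the paper: sufficiency via the $2/c_\nu$-Bergman--CZO property (Lemmas \ref{u-estimate of K}, \ref{u-estimate of K-2}, Corollary \ref{cor:forcnu1}) together with Proposition \ref{alpha Bergman CZO}(c) and the identity $2/(2-2/c_\nu)=c_\nu'$, and necessity via the indicator test functions $\mathbf{1}_{E_t}$ combined with the sign and lower-bound estimates of Lemma \ref{lem:new19}, discarding the second summand of $\partial_z K_\nu$ and restricting to $r\in[1-t,1]$ (resp.\ keeping all $r\in[0,1]$ and using monotone convergence when $c_\nu=2$). The only cosmetic differences are that the paper fixes $z=\sqrt{1-t}$ so that $1-|z|^2=t$ exactly, whereas you take an arbitrary $z_t\in E_t$ with $1-|z_t|^2\ge t/2$, and your handling of the degenerate case $c_\nu=1$ through the decomposition of Corollary \ref{cor:forcnu1} is an equivalent repackaging of the paper's direct appeal to the $2$-Bergman--CZO property.
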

\begin{proof}[Proof of Theorem \ref{thm:new2}]
It follows from Theorem \ref{weak bd on the line}, Theorem \ref{weak bd} and Theorem \ref{dual of weak 1 alpha}.
\end{proof}
Given a finite measure $\nu$ on $[0,1]$, under the $2/c_\nu'$-Carleson condition or the hyperbolic integrability condition appearing in Theorem \ref{weak bd on the line}, Theorem \ref{weak bd}, or Theorem \ref{dual of weak 1 alpha}, the operator $T_\nu$ is shown to be a $2/c_\nu$-Bergman–CZO on the unit disk by Lemma \ref{u-estimate of K}, Corollary \ref{cor:forcnu1} and Lemma \ref{u-estimate of K-2}. Therefore, applying Proposition \ref{alpha Bergman CZO} yields the implication $(3) \Rightarrow (2)$ in Theorem \ref{weak bd on the line}, as well as the sufficiency in Theorem \ref{weak bd} and Theorem \ref{dual of weak 1 alpha}.

\medskip
If $c_{\nu} = 1$, $2/c_{\nu}'$-Carleson condition is equivalent to $\nu$ is a finite measure. 
Starting from the next subsections, we complete the remaining parts of the above three theorems, which only concern the case \(1 < c_\nu \leq 2\). In particular, we have \(\nu(\{1\}) = 0\).

\subsection{Proof of Theorem \ref{weak bd on the line} }
The proof proceeds by considering two cases: $c_\nu\in(1,2)$ and $c_\nu=2$. Our strategy is 
\[
(1)\Rightarrow (3)\Rightarrow (2) \Rightarrow (1).
\]
Since the implication $(2)\Rightarrow (1)$ is trivial and $(3)\Rightarrow (2)$ has been proved, it suffices to establish the following implication:
\[
(1)\Rightarrow (3).
\]
Let $1 < p < c_{\nu}'$ with $1/q = 1/p + 1/c_{\nu} - 1$. Suppose that $T_{\nu} : L^p(\mathbb{D}) \to L^{q,\infty}(\mathbb{D})$ is bounded, i.e.,
\begin{equation}\label{weak}
\left | \left\{z \in \mathbb{D}: |T_{\nu}f(z)| > \tau \right\} \right | \le \left ( \frac{C\|f\|_{L^p(\mathbb{D})}}{\tau}\right )^{q}, \quad \text{for } \tau > 0, \, f \in L^p(\mathbb{D}).
\end{equation}
for some constant $C > 0$.

\subsubsection*{\bf The case $1 < c_\nu < 2$.} 
Let $c_0$ and $t_0$ be the constants in Lemma \ref{l-estimate of real part of K}. 
For any $t\in(0,t_0)$, define a set
\begin{equation}\label{E_t}
E_t := \left\{ \rho e^{i\theta} \in \mathbb{D} : 1 - t \le \rho \le 1 - \frac{t}{2},\ \left|\theta\right| \le \frac{t}{20} \right\},
\end{equation}
and its indicator function
\begin{equation}\label{f_t}
	f_t(z) = \mathbf{1}_{E_t}(z).
\end{equation}
By \eqref{|E_t|}, there exist absolute constants $c_1$ and $c_2$ such that
\begin{equation}\label{||f||2}
	c_1 t^2 \le |E_t| \le c_2 t^2 \quad \text{and} \quad c_1^{1/p} t^{2/p} \le \|f_{t}\|_{L^p(\mathbb{D})} \le c_2^{1/p} t^{2/p}.
\end{equation}
Therefore, by Lemma \ref{l-estimate of real part of K}, for all $z, \lambda\in E_t$ and $r\in[0,1]$,
the integrand defining $T_\nu f_t(z)$ has nonnegative real part, and for $r\in[1-t,1]$ it satisfies the stated lower bound.
\begin{equation}\label{real part of K}
  \operatorname{Re}\left(\frac{1}{(1-z\overline\lambda)(1-rz\overline\lambda)}\right) \ge \frac{c_0}{2t^2}.
\end{equation}
Then, for each $z \in E_t$, by \eqref{||f||2} and \eqref{real part of K}, we have
\begin{equation}\label{lowerbounded1}
\begin{aligned}
  \left|T_{\nu}f_t(z)\right| 
  &= \left| \int_{\mathbb{D}} \frac{1}{1 - z\overline{\lambda}} \int_0^1 \frac{1}{1 - rz\overline{\lambda}} \,d\nu(r) \mathbf{1}_{E_t}(\lambda) \,dA(\lambda) \right|\\
  &\ge \left| \operatorname{Re} \int_{E_t} \int_{0}^{1} \frac{1}{(1 - z\overline{\lambda})(1 - rz\overline{\lambda})} \,d\nu(r) \,dA(\lambda) \right| \\
  &\ge \int_{E_t} \int_{1 - t}^1 \operatorname{Re} \frac{1}{(1 - z\overline{\lambda})(1 - rz\overline{\lambda})} \,d\nu(r) \,dA(\lambda)  \\
  &\ge \frac{c_0}{2t^2} \nu([1 - t, 1)) \left|E_t\right| \ge \frac{1}{2}c_0 c_1 \nu([1 - t, 1)).
\end{aligned}
\end{equation}
Let $\tau = \frac{1}{4}c_0 c_1\nu([1 - t,1)),$ then $E_t \subset \{z \in \mathbb{D} : \left|T_{\nu}f_t(z)\right| > \tau\}.$
Hence, \eqref{weak} implies that
\[
\left( \frac{C\left\|f_t\right\|_{L^p(\mathbb{D})}}{\tau} \right)^{\!q} \ge \left| \left\{ z \in \mathbb{D} : \left|T_{\nu}f_t(z)\right| > \tau \right\} \right| \ge \left|E_t\right| \ge c_1  t^2.
\]
Set
$\widetilde{C}_{\nu} = 4 c_0^{-1} c_1^{-(1+1/q)}c_2^{1/p} C.$
Then combining \eqref{lowerbounded1}, 
\[
\nu([1 - t, 1)) \le \widetilde{C}_{\nu} \, t^{2/p - 2/q} = \widetilde{C}_{\nu} \, t^{2 - 2/c_{\nu}}, \quad \text{for } 0 < t < t_0,
\]
If $t_0\le t \le 1$, $\nu([1 - t, 1)) \le \nu([0,1))$, and $t^{2 - 2/c_{\nu}} \ge t_0^{2 - 2/c_{\nu}}$.
Hence, we have
\[
\sup_{0 < t \le 1} \frac{\nu([1 - t, 1))}{t^{2 - 2/c_{\nu}}} < \infty.
\]
This implies that $\nu$ satisfies the $2/c'_\nu$-Carleson condition.

\subsubsection*{\bf The case $c_{\nu} = 2$.} 
In this case, we have
$$
\frac{1}{q}=\frac{1}{p}-\frac{1}{2}.
$$
Let the set $E_t$ and function $f_t$ be defined as \eqref{E_t} and \eqref{f_t}. 
By Lemma \ref{l-estimate of real part of K}, for all $z, \lambda\in E_t$ and $r\in[0,1]$,
the integrand defining $T_\nu f_t(z)$ has nonnegative real part, and
\begin{equation}\label{real part of K 2}
  \operatorname{Re} \frac{1}{(1 - z\overline{\lambda})(1 - rz\overline{\lambda})} \ge \frac{c_0}{t((1 - r) + t)}.
\end{equation}
Then, for each $z \in E_t$, by \eqref{||f||2}, \eqref{lowerbounded1} and \eqref{real part of K 2}, we have
\begin{equation}\label{lowerbounded2}
\begin{aligned}
  \left|T_{\nu}f_t(z)\right| 
  &\ge \int_{0}^{1} \frac{c_0}{t\left((1 - r) + t\right)} \,d\nu(r) \left|E_t\right|\\
  &\ge c_0 c_1 t \int_{0}^{1} \frac{1}{(1 - r) + t} \,d\nu(r).
\end{aligned}
\end{equation}
Let
\[
\tau = \frac{1}{2}c_0 c_1 t \int_{0}^{1} \frac{1}{(1 - r) + t} \,d\nu(r),
\]
then $E_t \subset \{z \in \mathbb{D} : \left|T_{\nu}f_t(z)\right| > \tau\}$.
Hence, \eqref{weak} implies that
\[
\left( \frac{C\left\|f_t\right\|_{L^p(\mathbb{D})}}{\tau} \right)^{\!q} \ge \left| \left\{ z \in \mathbb{D} : \left|T_{\nu}f_t(z)\right| > \tau \right\} \right| \ge \left|E_t\right| \ge c_1 t^2.
\]
Then combining \eqref{lowerbounded2}, 
\[
\int_{0}^{1} \frac{1}{(1 - r) + t} \,d\nu(r) \le \widetilde{C}_{\nu},
\]
where $\widetilde{C}_{\nu} = 2c_0^{-1} c_1^{-(1+1/q)} c_2^{1/p} C$.
The Monotone Convergence Theorem yields
\[
\int_{0}^{1} \frac{1}{1-r}\, d\nu(r)
= \lim_{t\to 0}\int_{0}^{1} \frac{1}{(1-r)+t}\, d\nu(r)
\le \widetilde{C}_{\nu} < \infty.
\]
This completes the implication $(1)\Rightarrow (3)$.

\subsection{The proof of Theorem \ref{weak bd}}
Observe that the implication $(1)\Rightarrow (3)$ in Theorem \ref{weak bd on the line} remains valid for $p=1$ and $q=c_\nu$, which completes the proof of the necessity in Theorem \ref{weak bd}.

\subsection{The proof of Theorem \ref{dual of weak 1 alpha}}\label{Thm:4}
Recall it suffices to prove the necessary part of Theorem \ref{dual of weak 1 alpha}.
\subsubsection*{\bf The case $1 < c_\nu < 2$.}
Assume that $T_\nu:L^{c_\nu'}(\D)\to\mathcal B$ is bounded. Then there exists a constant $C>0$ such that for all $f \in L^{c_{\nu}'}(\mathbb{D})$,
\begin{equation}\label{bloch-seminorm}
\sup_{z\in\D}(1-|z|^2)\,|(T_\nu f)'(z)|\le C\|f\|_{L^{c_{\nu}'}(\mathbb{D})}.
\end{equation}
For $0 < t < 1$, let the set $E_t$ and function $f_t$ be defined as \eqref{E_t} and \eqref{f_t}. 
Then $$\|f_t\|_{L^{c_\nu'}(\D)}=|E_t|^{1/c_\nu'},$$ and there exist absolute constants $c_1,c_2>0$ such that
$
c_1 t^2\le |E_t|\le c_2 t^2,
$
\text{hence}
\[
c_1^{1/c_\nu'}t^{2/c_\nu'}\le \|f_t\|_{L^{c_\nu'}(\D)}\le c_2^{1/c_\nu'}t^{2/c_\nu'}.
\]
Fix $0<t<t_0$ and choose $z=\sqrt{1-t}\in E_t$ so that $t=1-|z|^2$.
\begin{align*}
  |(T_{\nu}f_t)'(z)| 
  &= \left |\int_{\mathbb{D}} \partial_z K_{\nu}(z,\lambda) \mathbf{1}_{E_t}(\lambda)dA(\lambda)\right |\\
  &= \left | \int_{E_t} \int_{0}^{1}\left(\frac{\overline{\lambda}}{(1 - z\overline{\lambda})^2(1 - rz\overline{\lambda})}  + \frac{r\overline{\lambda}}{(1 - z\overline{\lambda})(1 - rz\overline{\lambda})^2}  \right)d\nu(r) dA(\lambda) \right |.
\end{align*}
By Lemma \ref{lem:new19}, for all $\lambda\in E_t$ and $r\in[0,1]$,
the integrand defining $(T_\nu f_t)'(z)$ has nonnegative real part, 
\begin{equation}\label{eq: real part of partial K}
  \operatorname{Re}\left(\frac{\overline\lambda}{(1-z\overline\lambda)^2(1-rz\overline\lambda)}\right) \ge 0, \quad \text{and } \operatorname{Re}\left(\frac{\overline\lambda}{(1-z\overline\lambda)(1-rz\overline\lambda)^2}\right) \ge 0
\end{equation}
and for $r\in[1-t,1]$, it satisfies the stated lower bound:
\[
\operatorname{Re}\left(\frac{\overline\lambda}{(1-z\overline\lambda)^2(1-rz\overline\lambda)}\right) \ge \frac{\widetilde{c_0}}{2t^3}.
\]
Therefore,
\begin{align*}
  |(T_\nu f_t)'(z)|
&\ge \int_{E_t}\int_{1-t}^1 \operatorname{Re}\left(\frac{\overline\lambda}{(1-z\overline\lambda)^2(1-rz\overline\lambda)}+\frac{r\overline\lambda}{(1-z\overline\lambda)(1-rz\overline\lambda)^2}\right)\,d\nu(r)\,dA(\lambda)\\
&\ge \int_{E_t}\int_{1-t}^1 \operatorname{Re}\left(\frac{\overline\lambda}{(1-z\overline\lambda)^2(1-rz\overline\lambda)}\right)\,d\nu(r)\,dA(\lambda)\\
&\ge \frac{\widetilde{c_0}}{2t^3}\,|E_t|\,\nu([1-t,1]),
\end{align*}
where $\widetilde{c_0}= 1/108$.
Multiplying by $1-|z|^2$ and using \eqref{bloch-seminorm} give
\[
\widetilde{c_0}\,\frac{|E_t|}{2t^2}\,\nu([1-t,1])
\le (1-|z|^2)|(T_\nu f_t)'(z)|
\le C\|f_t\|_{L^{c_\nu'}(\D)}
\le C\,c_2^{1/c_\nu'}\,t^{2/c_\nu'}.
\]
Since $|E_t|\ge c_1 t^2$, and let $\widetilde{C} = 2\widetilde{c_0}^{-1}c_1^{-1} c_2^{1/{c_{\nu}'}}C$, we obtain
\[
\nu([1-t,1])\le \widetilde{C}\,t^{2/c_\nu'}, \quad 0<t<t_0.
\]
For $t_0\le t\le1$, finiteness of $\nu$ implies $\nu([1-t,1))\le \nu([0,1))$, hence
\[
\sup_{0<t\le1}\frac{\nu([1-t,1))}{t^{2/c_\nu'}}<\infty,
\]
i.e., $\nu$ satisfies the $2/c_\nu'$-Carleson condition.

\smallskip
\subsubsection*{\bf The case $c_\nu=2$.}
Let $E_t$ and $f_t$ be as above. Then $\|f_t\|_2=|E_t|^{1/2}$ and $c_1 t^2\le |E_t|\le c_2 t^2$.
Fix $0<t<t_0$ and recall $z=\sqrt{1-t}\in E_t$ so that $t=1-|z|^2$.
By Lemma \ref{lem:new19}, for all $\lambda\in E_t$ and $r\in[0,1]$,
the integrand defining $(T_\nu f_t)'(z)$ has nonnegative real part \eqref{eq: real part of partial K}, and for $r\in[0,1]$, it satisfies the stated lower bound:
\[
\operatorname{Re}\left(\frac{\overline\lambda}{(1-z\overline\lambda)^2(1-rz\overline\lambda)}\right) \ge \frac{\widetilde{c_0}}{t^2((1 - r) + t) }.
\]
Similarly, $\|f_t\|_2 = |E_t|^{1/2}$ and there exist absolute constants $c_1,c_2 > 0$ such that $c_1 t^2 \le |E_t| \le c_2 t^2$, and 
$c_1^{1/2} t\le \|f_t\|_2\le c_2^{1/2} t$, then
\begin{align*}
|(T_{\nu}f_t)'(z)| 
  &\ge \int_{E_t} \int_{0}^{1}\operatorname{Re}\left(\frac{\overline{\lambda}}{(1 - z\overline{\lambda})^2(1 - rz\overline{\lambda})}  + \frac{r\overline{\lambda}}{(1 - z\overline{\lambda})(1 - rz\overline{\lambda})^2}  \right)d\nu(r) dA(\lambda) \\
  &\ge  \int_{E_t} \int_{0}^{1} \operatorname{Re}\left(\frac{\overline{\lambda}}{(1 - z\overline{\lambda})^2(1 - rz\overline{\lambda})}\right)d\nu(r) dA(\lambda)\\
  &\ge |E_t| \widetilde{c_0} \int_{0}^{1} \frac{1}{t^2((1 - r) + t)}d\nu(r)\ge \widetilde{c_0} c_1 \int_{0}^{1} \frac{1}{(1 - r) + t}d\nu(r).
\end{align*}
Similarly, choose $z = (1 - t)^{1/2} \in E_t$ (so $t = 1 - |z|^2$), then
\[
t \widetilde{c_0} c_1 \int_{0}^{1} \frac{1}{(1 - r) + t}d\nu(r) \le (1 - |z|^2)|(T_{\nu}f_t)'(z)| \le C\|f_t\|_{L^{2}(\mathbb{D})} \le Cc_2^{1/2} t. 
\]
Dividing both sides by $t$ (since $t > 0$), and let $\widetilde{C} =  \widetilde{c_0}^{-1} c_1^{-1} c_2^{1/2}C$ we get
\[
\int_{0}^{1} \frac{1}{(1 - r) + t}d\nu(r) \le \widetilde{C}.
\]
Letting $t \to 0^+$ and applying the Monotone Convergence Theorem, we conclude
\[
\int_{0}^{1} \frac{1}{1 - r}d\nu(r) \le \widetilde{C} < \infty.
\]
This completes the proof.

%\bibliographystyle{plain}
%\bibliography{references}

\end{document}